\newcommand{\ie}{{i.e.,\ }}
\newcommand{\eg}{{e.g.,\ }}
\DeclareMathOperator*{\esssup}{ess\,sup}
\DeclareMathOperator*{\essinf}{ess\,inf}
\DeclareMathOperator{\var}{Var}
\DeclareMathOperator{\cov}{Cov}
\DeclareMathOperator{\neigh}{ne}
\DeclareMathOperator{\cl}{cl}
\newcommand\be{\begin{equation}}
\newcommand\ee{\end{equation}}
\newcommand\bea{\begin{eqnarray}}
\newcommand\eea{\end{eqnarray}}
\newcommand\bi{\begin{itemize}}
\newcommand\ei{\end{itemize}}
\newcommand\ben{\begin{enumerate}}
\newcommand\een{\end{enumerate}}
\newcommand{\Z}{\ensuremath{\mathbb{Z}}}
\newcommand{\N}{\mathbb{N}}
\newcommand{\E}{\mathbb{E}}
\renewcommand{\P}{\mathbb{P}}
\newtheorem{thm}{Theorem}[section]
\newtheorem*{thm*}{Theorem}
\newtheorem{cor}[thm]{Corollary}
\newtheorem{lem}[thm]{Lemma}
\newtheorem{prop}[thm]{Proposition}
\theoremstyle{definition}
\newtheorem{defi}[thm]{Definition}
\newtheorem*{defi*}{Definition}
\newtheorem{exa}[thm]{Example}
\theoremstyle{remark}
\newtheorem{rek}[thm]{Remark}
\newcommand{\ci}{\perp\!\!\!\perp}
\numberwithin{equation}{section}
\begin{document}
\title[Graphical Markov Models for Infinitely Many Variables]{Graphical Markov Models for Infinitely Many Variables}
\date{\today}

\author[David Montague and Bala Rajaratnam]{David Montague and Bala Rajaratnam \\ Stanford University}

\thanks{D.M. and B.R. are partially supported by the following: US Air Force Office of Scientific Research grant award FA9550-13-1-0043, US National Science Foundation under grant DMS-0906392, DMS-CMG 1025465, AGS-1003823, DMS-1106642, DMS-CAREER-1352656, Defense Advanced Research Projects Agency DARPA YFA N66001-111-4131, the UPS Foundation, SMC-DBNKY, and the National Science Foundation Graduate Research Fellowship under Grant No.\ DGE-114747}
\address{Departments of Mathematics and Statistics, Stanford
University, Stanford, CA - 94305, USA}
\email{D.M. davwmont@gmail.com; B.R. brajaratnam01@gmail.com}
\date{\today}
\keywords{Markov random fields, Global Markov property, graph separation, general intersection property, graphical Gaussian processes, graphical discrete processes.}

\begin{abstract}
Representing the conditional independences present in a multivariate random vector via graphs has found widespread use in applications, and such representations are popularly known as graphical models or Markov random fields. These models have many useful properties, but their fundamental attractive feature is their ability to reflect conditional independences between blocks of variables through graph separation, a consequence of the equivalence of the pairwise, local and global Markov properties demonstrated by Pearl and Paz (1985). Modern day applications often necessitate working with either an infinite collection of variables (such as in a spatial-temporal field) or approximating a large high-dimensional finite stochastic system with an infinite-dimensional system. However, it is unclear whether the conditional independences present in an infinite-dimensional random vector or stochastic process can still be represented by separation criteria in an infinite graph. In light of the advantages of using graphs as tools to represent stochastic relationships, we undertake in this paper a general study of infinite graphical models. First, we demonstrate that na{\"i}ve extensions of the assumptions required for the finite case results do not yield equivalence of the Markov properties in the infinite-dimensional setting, thus calling for a more in-depth analysis. To this end, we proceed to derive general conditions which \emph{do} allow representing the conditional independence in an infinite-dimensional random system by means of graphs, and our results render the result of Pearl and Paz as a special case of a more general phenomenon. We conclude by demonstrating the applicability of our theory through concrete examples of infinite-dimensional graphical models. 
\end{abstract}
\maketitle

% Reset equation counting in each section
\setcounter{equation}{0}
% Ensure table of contents doesn't show subsections
\setcounter{tocdepth}{1}
\vspace{-0.25in} \tableofcontents

\section{Introduction}

\subsection{Background}
Representing the conditional independences present in a multivariate random vector via graphs has found widespread use in applications, and has also led to important theoretical advances in probability. Such probabilistic Markov models are popularly known as graphical models or Markov random fields, and have seen rapid development in recent years. Though the initial motivation for their development primarily stemmed from statistical physics \cite{Snell}, they have become particularly relevant in modern applications. In particular, graphical models are widely used to provide easily interpreted graphical representations of complex multivariate dependencies that are present in a large collection of random variables. Thus, they are very valuable for analyzing modern high-dimensional throughput data, and have become staples in contemporary statistics, computer science, and allied fields. 

In such models, the nodes of a graph correspond to random variables. The absence of an edge between a pair of nodes represents conditional independence between the corresponding two random variables given all the other variables, and is known as the pairwise Markov property. A particularly useful feature of graphical models is the equivalence of the so-called pairwise (P), local (L), and global (G) Markov properties. These Markov properties relate a graph $\mathcal{G} = (V,E)$ and a collection of random variables $(X_i)_{i \in V}$ as follows:
\bi
\item (P) For $i,j\in V$, $i \not \sim_{\mathcal{G}} j$ implies $X_i \ci X_j | X_{V\setminus \{i,j\}}$.
\item (L) Given $i \in V$, $X_i \ci X_{V \setminus \cl(i)} | X_{\neigh(i)}$.
\item (G) Given $A, B \subseteq V$, and $C \subseteq V$ separating $A$ and $B$, we have $X_A \ci X_B | X_C$.
\ei
Here we have used the following notation: 1) for $A \subseteq V$, $X_A = (X_a)_{a \in A}$, 2) $i \sim_{\mathcal{G}} j$ means nodes $i$ and $j$ are adjacent in $\mathcal{G}$, 3) $\neigh(i)$ is the neighbor set of $i$ in $\mathcal{G}$ (so that $\neigh(i) = \{ j \in V \ | \ i \sim_\mathcal{G} j \}$), and 4) $\cl(i) = \{i\} \cup \neigh(i)$.

Whereas the pairwise Markov relationship represents independences between a pair of nodes, the global Markov property allows one to infer from the graph conditional independences between two blocks of variables, given a third block, if the first two blocks are ``separated" by this third block in the graph. 

One of the cornerstones of the field of graphical models is the equivalence of these three Markov properties for finite graphical models, which holds under relatively mild conditions and was first demonstrated by Pearl and Paz \cite{PearlPaz}. This equivalence result provides a way to relate the local and global conditional independence structure within a collection of random variables, and establishes the value of the graphical representation of these models. However, this equivalence between the pairwise, local, and global Markov properties does not readily extend to infinite collections of random variables. 

There are both theoretical and practical reasons for studying infinite-dimensional graphical models. First, the study of the limits of sequences of graphs has seen much development in recent years \cite{Lovasz2006, graphs1, graphs2}, yet the ability of these ``infinite graphs'' to represent multivariate dependencies in an infinite-dimensional random vector is not well understood. Second, modern applications have to contend with very high-dimensional data, and approximating such large finite systems with an infinite system has some clear practical advantages. For instance, by the same broad principles relevant when using general asymptotic or limiting approximations to better understand or analyze finite models, an infinite-dimensional graphical model can also serve to better illustrate the salient features of a large finite counterpart. We expand on these and other compelling reasons for studying infinite-dimensional graphical models in subsection \ref{sec:motivation}.

The goal of this paper is therefore to understand the equivalence of the global, local, and pairwise Markov properties for an infinite collection of variables. We begin by first introducing a graphical framework which allows us to establish the equivalence of analogues of the pairwise, local, and global Markov properties for ternary relations on infinite graphs from a purely set-theoretic perspective (Section \ref{sec:AGF}). We then develop the probability theory required to apply this graphical framework to the relation induced by conditional independence on  a countable collection of random variables (Section \ref{sec:CI}). The set-theoretic framework is introduced prior to developing the relevant probability theory in order to separate the difficulties arising from infinite graphs versus infinite probability distributions. Finally, we demonstrate the broad applicability of our results through two general classes of graphical stochastic processes: a) Gaussian processes (Section \ref{sec:GGP}), and b) discrete (i.e., $\{0,1\}$-valued) processes (Section \ref{sec:discproc}). More specifically, we obtain sufficient conditions for the equivalence of the Markov properties in each of these contexts, and subsequently verify them on a collection of examples. These examples include, among others, autoregressive processes in the Gaussian setting, and an infinite-dimensional extension of the Ising model in the discrete setting. For these examples, we also actually verify one of the Markov properties, allowing us to immediately conclude the other two by equivalence, and thus establish the collection of variables as an infinite-dimensional graphical model. 

The proofs of the various results contained in this paper are placed either in the main text, the Appendix, or the Supplemental section depending on the centrality of the result and so that they do not excessively detract from the flow in the main body of the paper.

\subsection{Motivation} \label{sec:motivation}

One of the primary reasons for the popularity of graphical models in modern applications stems from their ability to facilitate the inference of conditional independences between large sets of variables  (as opposed to conditional independence between just two variables). This is achieved through the global Markov property by using separation statements in graphs, and relies fundamentally on the equivalence of the global, local, and pairwise Markov properties. The Pearl and Paz \cite{PearlPaz} result relating the various Markov properties in the finite setting has already been very useful for these purposes, but the more general infinite case has yet to receive a similar treatment. We provide a number of compelling reasons why such an investigation is important and long overdue, both from a theoretical and application perspective:

\begin{enumerate}
\item Due to the nature of modern data science and ``Big Data'' applications, the number of variables or features under consideration is becoming increasingly very large. In fact, the number of variables can easily be in the hundreds of thousands, or even many millions (e.g., gene expression data, EQTL high throughput data, single nucleotide polymorphism data, remote sensing data, high frequency trading data, etc.). The verification of the equivalence of the Markov properties by checking that the joint density is positive is simply not feasible for many modern applications for at least two reasons. First, note that the enormous number of variables is often accompanied by limited samples. This sample starved setting leads to extremely sparsely distributed data in very high-dimensional space. In such settings traditional frequency curves or histograms are not useful for describing the underlying data generating mechanisms or probability models. Second, the very large number of variables also means that the shape and support of each of the marginal densities cannot be easily checked, let alone the shape and support of the joint density. While the conditions for equivalence of the Markov properties for infinite collections of random variables established in this paper do imply the existence of a positive density, they may in some cases serve as a more directly verifiable safeguard for the very high-dimensional but still finite setting, thus still allowing one to invoke the global Markov property. 

\item Many collections of random variables are by their very nature infinite-dimensional. For instance, many collections of random variables evolve in time and/or space with either a countable or uncountable index set. Though such processes may be observed only at finite times due to practical constraints, they are in principle actually infinite.  At least in discrete examples of such settings (e.g., infinite lattice based models), an understanding of the conditional independences between variables that are present in the true underlying random process can only be achieved by examining the infinite analogues of the Markov properties.

\item Consider the case of the standard Gaussian graphical model. In the finite-dimensional setting it is well known that graphical structure corresponds to classes of covariance matrices with zeros in the inverse. The families of distributions of inferential interest here are thus Gaussian distributions parameterized by sparse inverse covariance matrices. We shall show in this paper that in order to obtain graphical structure in the infinite setting, further conditions are required on infinite-dimensional covariance matrices that parameterize Gaussian processes. In particular, our investigation into infinite-dimensional probabilistic models identifies classes of probability measures which enjoy graphical structure. Establishing and identifying classes of such infinite graphical models therefore serves the important purpose of laying the probabilistic foundations required for statistical inference. It specifies clearly the families of measures on which inference is to be undertaken if one is interested in obtaining probabilistic data generating mechanisms which describe observed data in a parsimonious way.  

\item Recall that theoretical guarantees of traditional statistical inferential methods are proven in the ``classical asymptotic regime" when the number of variables (or dimension) $p$ remains fixed and the sample size $n$  tends to infinity. The advent of high throughput data, especially in the last two decades, has seen inferential techniques for estimating or recovering a sparse graphical model in the ``mixed asymptotic regime"  when both sample size $n$ and number of variables $p$ tend to infinity. Very recently, a new class of methods for recovering sparse graphical models has been proposed in the ``purely high-dimensional asymptotic regime" when the sample size $n$ is actually fixed but the dimension $p$ tends to infinity \cite{HeroRajaratnam2011, HeroRajaratnam2012}.  This regime has been argued as the appropriate regime for modern ``Big Data" applications and corresponds to the infinite-dimensional setting.  Establishing theoretical safeguards of inferential procedures in the ``purely high-dimensional"  asymptotic regime, however, requires conditions on both the joint distributions as well as sparsity in parameters of interest \cite{HeroRajaratnam2011, HeroRajaratnam2012}. These technical conditions are motivated only by statistical considerations, but it is not immediately clear if these inferential procedures actually yield true probabilistic graphical models in the infinite setting (i.e., in which the Global Markov property can be invoked). The work in this paper thus also serves to address this gap by undertaking a probabilistic treatment of infinite graphical models.

\item Another compelling motivation for our work is an important technical one. Recall that the equivalence of the Markov properties in the finite setting relies fundamentally on the so-called intersection property. We demonstrate in this paper that a na{\"i}ve or straightforward extension of the intersection property to infinite sets, and the verification thereof, does not ensure the equivalence of the global, local, and pairwise Markov properties. Thus the ability to infer conditional independences between blocks of variables in the infinite setting is simply not guaranteed using assumptions from the finite-dimensional case. The verification of such assumptions may give the false impression that the global Markov property holds when it actually does not, motivating a rigorous treatment of the infinite-dimensional setting.
\end{enumerate}

\section{Summary of Main Results}

We now provide an overview of the main results in the paper. Our first goal is to obtain equivalence of the pairwise, local, and global Markov properties for ternary relations. First, we provide the necessary definitions. 

\begin{defi} \label{def:p1p5}
Let $V$ be any set. Let $\cdot \perp \cdot \ | \ \cdot $  be a ternary relation on the power set $\mathcal{P}(V)$. The following properties of the relation are satisfied if their respective statements are true for all (potentially infinite) sets $X, Y, Z, W \in \mathcal{P}(V)$:

\bi
\item (P1*) \emph{Symmetry:} $X \perp Y \ | \ Z$ implies $Y \perp X \ | \ Z$;
\item (P2*) \emph{Decomposition:} $X \perp (Y,W) \ | \ Z$ implies $X \perp Y \ | \ Z$ and $X \perp W \ | \ Z$;
\item (P3*) \emph{Weak Union:} $X \perp (Y,W) \ | \ Z$ implies $X \perp Y \ | \ (Z,W)$;
\item (P4*) \emph{Contraction:} $X \perp Y \ | \ (Z,W)$ and $X \perp W \ | \ Z$ imply $X \perp (Y,W) \ | \ Z$;
\item (P5*) \emph{General Intersection:} For any partition $X = \cup_k X_k$ into \emph{finite} subsets of $V$, if $X_k \perp Y \ | \ Z \cup \left (\bigcup_{j \neq k} X_j\right )$ for all $k$, then $X \perp Y \ | \ Z$.
\ei

Following the nomenclature in \cite{Lauritzen}, we call the relation $\cdot \perp \cdot \ | \ \cdot$ a \emph{semi-graphoid relation} on $V$ provided it satisfies (P1*) -- (P4*). If in addition the relation satisfies (P5*), we call it an \emph{extended graphoid relation}. 
\end{defi}

While (P1*)--(P4*) are essentially the same as their traditional finite counterparts which have already appeared in the literature, the statement of (P5*) is a less straightforward generalization of the usual intersection property to the infinite setting. We also note that the cardinality of the various sets in the statements of (P1*)--(P5*) are bounded only by $|V|$, i.e., if $|V|$ is uncountably infinite then the various sets $X, Y, Z, W$ may also be.

\begin{defi} \label{defi:smp}
If $V$ is any set, $\mathcal{G}$ is any undirected graph on $V$, and $\cdot \perp \cdot \ | \ \cdot$ is any ternary relation on $\mathcal{P}(V)$, we say that $(V,\perp)$ satisfies the pairwise (P*), local (L*), or global (G*) \emph{set-Markov properties} with respect to $\mathcal{G}$ provided that 
\bi
\item (P*) for each pair of vertices $\{i,j\} \subseteq V$,
$$i \not \sim_{\mathcal{G}} j \Rightarrow \{i\} \perp \{j\} \ | \ V \setminus \{i,j\};$$
\item (L*) for each vertex $i$,
$$\{i\} \perp V \setminus \cl(i) \ | \ \neigh(i);$$
\item (G*) for each triple (A,B,S) of disjoint subsets of $V$ with $S$ separating $A$ from $B$, we have
$$A \perp B \ | \ S.$$
\ei

We say that (P*), (L*), or (G*) \emph{holds over} $\mathcal{G}$ when $(V,\perp)$ is clear from context.
\end{defi}

We prove the following theorem, which in particular establishes the equivalence of the set-Markov properties under (P1*)--(P5*), and in the case of finite $V$ reduces to the foundational result of Pearl and Paz \cite{PearlPaz}.
\begin{thm} \label{thm:CIREquiv}
Suppose $V$ is any set, and $\cdot \perp \cdot \ | \ \cdot $ satisfies (P1*) -- (P4*). Then
$$(V,\perp) \textrm{ satisfies (G*) } \Rightarrow (V,\perp) \textrm{ satisfies (L*) }\Rightarrow (V,\perp) \textrm{ satisfies (P*)}.$$
If, in addition, $\cdot \perp \cdot \ | \ \cdot$ satisfies (P5*), then we also have
$$(V,\perp) \textrm{ satisfies (P*) } \Rightarrow (V,\perp) \textrm{ satisfies (G*)}.$$
\end{thm}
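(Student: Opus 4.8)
The plan is to treat the three implications separately, the first two being direct and the last, (P*) $\Rightarrow$ (G*), carrying essentially all the content. For (G*) $\Rightarrow$ (L*) I would simply note that for each vertex $i$ the triple $(\{i\},\, V\setminus\cl(i),\, \neigh(i))$ is pairwise disjoint (using $\cl(i)=\{i\}\cup\neigh(i)$ and the absence of self-loops) and that $\neigh(i)$ separates $\{i\}$ from $V\setminus\cl(i)$, since any finite path leaving $i$ must first step to a neighbour; applying (G*) to this triple is exactly (L*). For (L*) $\Rightarrow$ (P*), suppose $i\not\sim_{\mathcal G} j$ with $i\neq j$, so $j\in V\setminus\cl(i)$. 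Writing $V\setminus\cl(i)=\{j\}\cup R$ with $R:=V\setminus(\cl(i)\cup\{j\})$, I would feed the statement $\{i\}\perp(\{j\}\cup R)\mid\neigh(i)$ from (L*) into Weak Union (P3*) to move $R$ into the conditioning set; since $\neigh(i)\cup R=V\setminus\{i,j\}$ this yields $\{i\}\perp\{j\}\mid V\setminus\{i,j\}$, which is (P*). Neither of these uses (P5*).

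The heart of the argument is (P*) $\Rightarrow$ (G*). The finite proof peels one vertex off $A$ at a time and recombines the two resulting statements via the finite intersection property, but this induction fails to terminate when $A$ or $B$ is infinite; the purpose of (P5*) is to replace this induction by a single step that splits a set into \emph{all} of its singletons at once. Given disjoint $A,B,S$ with $S$ separating $A$ from $B$, I would first reduce to the covering case $A\cup B\cup S=V$: let $\bar A$ be the set of vertices reachable from $A$ by finite paths avoiding $S$ (a union of connected components of the subgraph induced on $V\setminus S$) and set $\bar B:=V\setminus(S\cup\bar A)$. Then $A\subseteq\bar A$, $B\subseteq\bar B$, the three sets are disjoint and cover $V$, and $S$ still separates $\bar A$ from $\bar B$ since any path between them must use a vertex of $S$. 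Proving $\bar A\perp\bar B\mid S$ then returns $A\perp B\mid S$ by Symmetry (P1*) and Decomposition (P2*).

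In the reduced case I would apply (P5*) twice, exploiting that singletons are finite subsets so that the partitions into singletons are legitimate regardless of cardinality. Partitioning $\bar A=\bigcup_{a}\{a\}$ with $Y=\bar B$ and $Z=S$: for each $a\in\bar A$ the set $S\cup(\bar A\setminus\{a\})$ still separates $\{a\}$ from $\bar B$, so it suffices to establish $\{a\}\perp\bar B\mid S\cup(\bar A\setminus\{a\})$, whereupon (P5*) delivers $\bar A\perp\bar B\mid S$. Fixing $a$ and writing $S_a:=S\cup(\bar A\setminus\{a\})$, I would use Symmetry and a second application of (P5*), now partitioning $\bar B=\bigcup_b\{b\}$ with $Y=\{a\}$ and $Z=S_a$, reducing $\bar B\perp\{a\}\mid S_a$ to the family $\{b\}\perp\{a\}\mid S_a\cup(\bar B\setminus\{b\})$. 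The crucial point is that $S_a\cup(\bar B\setminus\{b\})=V\setminus\{a,b\}$ precisely because the reduction guarantees $\bar A\cup\bar B\cup S=V$; and since $a$ and $b$ lie in sets separated by $S$ there is no edge between them, so (P*) gives exactly $\{a\}\perp\{b\}\mid V\setminus\{a,b\}$, and Symmetry converts it to the required $\{b\}\perp\{a\}\mid V\setminus\{a,b\}$.

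I expect the main obstacle to be conceptual rather than computational: recognizing that the finite vertex-by-vertex induction genuinely breaks down for infinite index sets, and that (P5*) — with its partition into arbitrarily (even uncountably) many finite blocks — is the correct substitute. The supporting technical care lies in the expansion step, which must be arranged so that the final conditioning sets are exactly $V\setminus\{a,b\}$, since this is what makes (P*) applicable. One should also dispose of the degenerate cases in which $A$, $B$, or $V\setminus(A\cup B\cup S)$ is empty, handled by the convention that $\emptyset\perp Y\mid Z$ always holds.
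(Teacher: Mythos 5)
Your proposal is correct and follows essentially the same route as the paper: the same trivial arguments for (G*) $\Rightarrow$ (L*) (neighbour sets separate) and (L*) $\Rightarrow$ (P*) (weak union), and for (P*) $\Rightarrow$ (G*) the same enlargement to the reachable set $\widetilde{A}$ and its complement followed by two nested applications of (P5*) with singleton partitions, using that the enlarged sets together with $S$ cover $V$ so that the conditioning sets become exactly $V\setminus\{a,b\}$. The only differences are presentational (you state the outer (P5*) application before establishing its hypotheses, and you add remarks on disjointness and degenerate cases), not mathematical.
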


After establishing the set theoretic framework outlined above, we apply Theorem \ref{thm:CIREquiv} to the ternary relation induced by conditional independences present in a countably infinite collection of random variables.

\begin{cor}
Suppose that $\{X_n\}_{ n \in \mathbb{N}}$ is a collection of random variables satisfying property (P5*). Then for the ternary relation $\cdot \ci_X \cdot \ | \ \cdot$ on $\mathbb{N}$ induced by the conditional independences in the collection of random variables $\{X_n\}_{ n \in \mathbb{N}}$,
$$(\mathbb{N},\ci_X) \textrm{ satisfies (G*) } \Leftrightarrow (\mathbb{N},\ci_X) \textrm{ satisfies (L*) }\Leftrightarrow (\mathbb{N},\ci_X) \textrm{ satisfies (P*)}.$$
\end{cor}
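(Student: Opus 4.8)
The plan is to deduce the corollary directly from Theorem \ref{thm:CIREquiv}. Writing $V = \mathbb{N}$, it suffices to verify that the induced relation $\ci_X$ satisfies the semi-graphoid axioms (P1*)--(P4*); the remaining axiom (P5*) is supplied by the hypothesis on $\{X_n\}_{n \in \mathbb{N}}$. Once (P1*)--(P5*) all hold for $\ci_X$, Theorem \ref{thm:CIREquiv} delivers both chains of implications, namely (G*) $\Rightarrow$ (L*) $\Rightarrow$ (P*) and (P*) $\Rightarrow$ (G*), whence the claimed three-way equivalence.

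First I would fix, for (possibly infinite) index sets $A, B, C \subseteq \mathbb{N}$, the meaning of $A \ci_X B \mid C$ via the measure-theoretic notion developed in Section \ref{sec:CI}, namely conditional independence of the generated $\sigma$-algebras $\sigma(X_A)$ and $\sigma(X_B)$ given $\sigma(X_C)$. Working at the level of $\sigma$-algebras is important here, since in the infinite-dimensional setting one cannot in general appeal to joint densities, whereas the $\sigma$-algebra formulation of the semi-graphoid properties is insensitive to the cardinality of the index sets.

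With this definition, (P1*)--(P4*) reduce to standard structural facts about conditional independence of $\sigma$-algebras. Symmetry (P1*) is immediate. Decomposition (P2*) and weak union (P3*) follow from the monotonicity of conditional independence under passage to sub-$\sigma$-algebras and under enlargement of the conditioning $\sigma$-algebra: since $\sigma(X_B) \subseteq \sigma(X_{B \cup D})$, independence of $\sigma(X_A)$ from $\sigma(X_{B \cup D})$ given $\sigma(X_C)$ restricts to independence from $\sigma(X_B)$ and transfers to conditioning on $\sigma(X_{C \cup D})$. Contraction (P4*) is the combining property, obtained by telescoping the defining conditional-expectation identities. Crucially, none of these arguments invokes any intersection-type statement, so each goes through verbatim for infinite $A, B, C, D$; the only property whose infinite version is genuinely delicate is (P5*), which is precisely why it appears as a hypothesis rather than as a claim to be proved.

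The main obstacle is therefore not in the four semi-graphoid axioms themselves, but in confirming that their classical proofs remain valid once the index sets are allowed to be infinite. Concretely, I would check that the characterization of $A \ci_X B \mid C$ by the identity $\E[f \mid \sigma(X_{B \cup C})] = \E[f \mid \sigma(X_C)]$ for every bounded $\sigma(X_A)$-measurable $f$ is stable under the set operations appearing in (P2*)--(P4*). Granting the measure-theoretic groundwork laid in Section \ref{sec:CI}, these are routine verifications, and the corollary follows immediately by invoking Theorem \ref{thm:CIREquiv}.
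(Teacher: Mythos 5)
Your proposal is correct and follows the same route as the paper: the corollary is obtained by combining the hypothesis (P5*) with the fact that (P1*)--(P4*) hold for the $\sigma$-algebra formulation of conditional independence on any countable collection of random variables (the paper's Lemma \ref{lem:p1p4}, proved in the Supplemental section by exactly the measure-theoretic arguments you sketch), and then invoking Theorem \ref{thm:CIREquiv}. Your emphasis on working with $\sigma$-algebras rather than densities, and on (P5*) being the only genuinely delicate axiom in the infinite setting, matches the paper's treatment.
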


The above corollary establishes the importance of (P5*), but leaves its verification untouched. Our next goal is to establish the equivalence of the various Markov properties for a wide class of examples of infinite collections of random variables by verifying (P5*). To this end, we introduce in the main text sufficient conditions for (P5*) to hold for a general collection of random variables (we give these sufficient conditions the names ``infinite intersection property'' (IIP) and ``decorrelation property'' (DCP); see Definition \ref{def:d1d2} for their statements). We then verify these conditions for two important and widely used classes of models: countably infinite Gaussian processes, and countably infinite discrete processes. Our primary results in this regard are given below.

\begin{thm} \label{thm:gaussianMR}
Let $(X_n)_{n \in \mathbb{N}}$ be a Gaussian process with covariance matrix $\Sigma$ which satisfies the following bounds:
\ben
\item There exist constants $c$ and $C$ such that for any finite subset of nodes $A$, we have
$$0 < c < \lambda_i(\Sigma_A) < C,$$
for all $i$, where $\lambda_i(\Sigma)$ represents the $i^{\text{th}}$ largest eigenvalue of the matrix $\Sigma$.
\item There exists a function $g_0(i,j)$ which bounds the covariances
$$|\cov(X_i, X_j)| \leq g_0(i,j) \ \ \ \forall i,j$$
and if we recursively define
$$g_{n+1}(i,j) = g_n(i,j) + \sum_{k=1}^\infty g_n(i,k) g_n(k,j),$$ then
for $n \leq 4$, $g_n(i,j)$ exists and is finite, and moreover
$$\sum_{k=1}^\infty g_n(i,k) k^\epsilon < \infty$$ for some $\epsilon > 0$ and all $i$.
\een
Then the ternary relation $\cdot \ \ci_X \ \cdot \ | \ \cdot$ satisfy the infinite intersection property (IIP) and the decorrelation property (DCP), and hence (P5*).
\end{thm}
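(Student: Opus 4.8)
The plan is to verify the two named conditions (IIP) and (DCP) directly from the spectral and decay hypotheses, since by the sufficient-condition result accompanying Definition~\ref{def:d1d2} these together imply (P5*); no separate argument for (P5*) is then needed. Throughout I would exploit the defining feature of the Gaussian case: for a finite index set $A$, conditional independence among the $(X_i)_{i\in A}$ given the remaining coordinates is encoded by the zero pattern of the precision matrix $\Sigma_A^{-1}$, and more generally the conditional covariance of $X_B$ given $X_C$ (for finite $B,C$) is the Schur complement $\Sigma_{BB} - \Sigma_{BC}\Sigma_{CC}^{-1}\Sigma_{CB}$. The first hypothesis, the uniform two-sided eigenvalue bound $0<c<\lambda_i(\Sigma_A)<C$ over all finite $A$, plays the role of a uniform well-conditioning assumption: it guarantees $\|\Sigma_A^{-1}\|\le 1/c$ and $\|\Sigma_A\|\le C$ independently of $A$, so that all Schur complements and their inverses are uniformly bounded and, critically for the infinite setting, converge as the conditioning set is exhausted by an increasing sequence of finite sets. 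The second hypothesis supplies the quantitative spatial decay needed to control tails.

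First I would translate the decay hypothesis into an entrywise bound on precision matrices. Writing $\Sigma_A = C(I - B_A)$ with $B_A = I - \Sigma_A/C$, the eigenvalue bound gives $\|B_A\|\le (C-c)/C<1$, so the Neumann series $\Sigma_A^{-1} = C^{-1}\sum_{m\ge 0} B_A^m$ converges, and the entries of $B_A$ are bounded by $g_0/C$ off the diagonal, the diagonal being uniformly bounded by $1-c/C<1$. The point of the recursion $g_{n+1}(i,j)=g_n(i,j)+\sum_k g_n(i,k)g_n(k,j)$ is exactly that the convolution $\sum_k g_n(i,k)g_n(k,j)$ dominates the entries of the matrix square, so that $g_n$ bounds, up to the constant powers of $C$, the entries of the partial sums $\sum_{m\le 2^n}B_A^m$, uniformly in $A$. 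Hence the precision entries, and more generally the entries of the conditional covariance of any block given any other block, inherit the decay of $g_n$; the weighted summability $\sum_k g_n(i,k)k^\epsilon<\infty$ then yields a uniform, summable decay of the conditional covariances $\cov(X_i,X_j\mid X_C)$ in the index separation. This is precisely the content of (DCP), and the reason the recursion is required only up to $n\le 4$ is that the verification of (DCP) and (IIP) involves only a bounded number of nested inversions and Schur-complement corrections, each consuming a few iterations of the bound.

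Next I would establish (IIP). Here the input is a partition $X=\bigcup_k X_k$ into finite blocks together with the per-block statements $X_k \ci_X Y \mid Z\cup\bigl(\bigcup_{j\ne k}X_j\bigr)$, and the goal is the single statement $X\ci_X Y\mid Z$. In Gaussian terms this amounts to showing that the conditional cross-covariance between $X$ and $Y$ given $Z$ vanishes. I would approximate by the finite truncations $X^{(N)}=\bigcup_{k\le N}X_k$, for which the corresponding finite intersection relations can be assembled from the hypotheses using the semi-graphoid axioms (P1*)--(P4*), giving $X^{(N)}\ci_X Y\mid Z\cup(X\setminus X^{(N)})$; the remaining task is to let $N\to\infty$ and to strip off the infinite conditioning block $X\setminus X^{(N)}$. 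This limit is controlled by the decay established in the previous step: the difference between $\cov(X^{(N)}_i, Y_j \mid Z)$ and its analogue with the extra conditioning is a Schur-complement correction whose entries are bounded by the tail sums of $g_n$ over the discarded indices, which vanish as $N\to\infty$ by the weighted summability. Combining the two limits shows the conditional cross-covariance is zero, i.e.\ (IIP) holds.

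The main obstacle is this limiting argument. Conditioning on an infinite set must first be given a well-defined meaning, via $L^2$ limits of finite conditional expectations legitimized by the uniform spectral bounds, and then the two separate passages to the limit, exhausting the block structure and stripping off the infinite conditioning tail, must be shown to commute and to leave no residual long-range contribution. The quantitative heart of the proof is therefore the bookkeeping showing that the tail of the Neumann/Schur expansion is dominated by $\sum_k g_n(i,k)k^\epsilon$ over the discarded indices and hence is negligible; pinning down exactly how many iterations of the $g_n$ recursion are consumed by the nested inversions, and thereby justifying the threshold $n\le 4$, is the delicate part that the eigenvalue bound alone cannot supply.
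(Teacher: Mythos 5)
Your high-level architecture matches the paper's --- verify (IIP) and (DCP) separately and invoke Theorem \ref{thm:d1d2thm} --- and you correctly assign the eigenvalue bound to nondegeneracy and the $g_n$ recursion to decorrelation. But both verifications as sketched aim at the wrong targets, and the gaps are substantive. First, what you describe under the heading of (IIP) --- a partition $X=\bigcup_k X_k$ into finite blocks, exhaustion by truncations $X^{(N)}$, and a limit $N\to\infty$ that ``strips off the infinite conditioning block'' --- is the statement of (P5*) itself, not of (IIP). Per Definition \ref{def:d1d2}, (IIP) only asks for the two-statement intersection property with \emph{finite} $A,B,C$ and a possibly infinite conditioning set $D$; the exhaustion-and-limit argument you sketch is exactly what Theorem \ref{thm:d1d2thm} already packages (via backward martingale convergence plus (DCP)), so redoing it inside the (IIP) verification both duplicates that work and leaves the actual (IIP) statement unaddressed. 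Moreover, your reduction of conditional independence to ``the conditional cross-covariance vanishes'' is only legitimate once one knows that the conditional law of a finite block given an \emph{infinite} block is again Gaussian with an almost-surely constant conditional covariance; that is precisely the content of the paper's Theorem \ref{thm:density}, which is the main technical input to Lemma \ref{lem:d1} and which your sketch flags as an ``obstacle'' without resolving. (The paper's Lemma \ref{lem:d1} then runs a density-factorization argument rather than a covariance computation.)

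Second, the claim that summable decay of the conditional covariances ``is precisely the content of (DCP)'' is not correct. (DCP) is a statement about arbitrary events in the tail $\sigma$-algebra $\bigcap_n\sigma(X_D,X_n,X_{n+1},\dots)$, and the passage from covariance decay to the required bound $\var(\mathbb{P}(A\mid X_1,\dots,X_m,X_D)\mid X_D=x_D)\le g_{m,D,x_D}(n)$ for \emph{every} event $A\in\sigma(X_D,X_n,\dots,X_N)$ (the criterion of Lemma \ref{lem:d2new}) is the technical heart of the paper's Lemma \ref{lem:d2}: one writes the conditional probability as an integral of the conditional Gaussian density, differentiates along the path $t\mapsto t(x_1,\dots,x_m)$, and bounds the derivative by nested sums of the $g_r$, which is where the threshold $r\le 4$ actually comes from. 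Your sketch supplies none of this bookkeeping, and explicitly defers the one point (why four iterations suffice) that the proof must settle. Finally, a smaller technical point: the Neumann-series route to entrywise precision bounds does not close as stated, since $g_n$ controls only the partial sums $\sum_{m\le 2^n}B_A^m$ entrywise while the tail of the series is controlled only in operator norm, which yields a small but spatially \emph{constant} entrywise bound and hence no decay in the index separation. The paper instead bounds precision entries through partial correlations and Schur complements, using the two-sided eigenvalue bound to control the conditional variances.
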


\begin{rek}
We comment briefly on the intuitive interpretation of conditions (1) and (2) above. The eigenvalue condition (1) ensures sufficient level of nonsingularity of the distribution that it is possible to obtain a positive density, which will be important for verifying (IIP). Condition (2) can be interpreted as a ``decorrelation'' condition, as it forces $|\cov(X_i, X_j)|$ to decay sufficiently fast as $X_i$ and $X_j$ become further apart, and will be important for verifying (DCP). One commonly used rich class of models for which this second condition holds is the lattice model with the exponential or Gaussian covariance function \cite{tingley}. Full details for the Gaussian covariance function are included in Example \ref{exa:cov}.
\end{rek}

\begin{rek}
We note that the conditions on the $g_n$ from (2) in Theorem \ref{thm:gaussianMR} are conceptually similar to ``decorrelation'' conditions required for other probabilistic results for dependent random variables. As a first example, the central limit theorem for dependent variables requires
$$\lim_{n \to \infty} \frac{1}{n}\sum_{i=1}^n \sum_{j \neq i}^n \cov(X_i,X_j) = \gamma$$
for some finite constant $\gamma$ \cite{lehmann}. This condition is similar to that on the $g_n$ from Theorem \ref{thm:gaussianMR} in that both ultimately imply a certain level of decay in the covariances of the variables in question. A second example of a useful decorrelation condition is the often imposed assumption of absolute summability of the autocorrelations in a covariance-stationary sequence of random variables:
$$\sum_{k=0}^\infty |\cov(X_0,X_k)| := \sum_{k=0}^\infty |\gamma_k| < \infty.$$ 
This condition can be used to obtain a weak law of large numbers \cite{doob}, and is actually implied by the second part of condition (2) in Theorem \ref{thm:gaussianMR}. 
\end{rek}
% Removed this final sentence from above remark:
%All of the conditions of Theorem \ref{thm:gaussianMR} are verified for covariance-stationary sequences arising from general autoregressive models in Example \ref{exa:autoreg}.

\begin{thm} \label{thm:discreteMR}
Suppose the random variables $\{X_n \ | \ n \in \mathbb{N}\}$ form a $\{0,1\}$-valued stochastic process satisfying the following conditions:
\ben
\item For any finite increasing sequences of natural numbers $I = (i_1,...,i_m)$ and $J = (j_1,...,j_r)$, there is a constant $c_I$ depending only on $I$ such that for any two $\{0,1\}$-valued sequences $(a_1,...,a_m)$ and $(b_1,...,b_r)$, we have
$$\mathbb{P}((X_{i_1},...,X_{i_m}) = (a_1,...,a_m) \ | \ X_{j_1}=b_1,...,X_{j_r}=b_r) > c_I \textrm{ a.s. } X_J$$
\item Let $n \in \mathbb{N}$ and $B \subseteq \mathbb{N}$ be arbitrary, and let $\mathcal{F}_{B,-n} = \sigma(X_B,X_n,X_{n+1},...)$. For any $m \in \mathbb{N}$ and $\mathbb{P}$-a.e.\ value of $X_B = x_B$, there is a function $g_{m,B,x_B}(n)$ satisfying
\bi 
\item $\lim_{n \rightarrow \infty}g_{m,B,x_B}(n) = 0$
\item For any $A \in \mathcal{F}_{B,-n}$, 
\be \var(\mathbb{P}(A|X_1,...,X_m,X_B)|X_B=x_B) < g_{m,B,x_B}(n). \label{ln:varvar} \ee

\ei
\een
Then the $\{X_n \ | \ n \in \mathbb{N}\}$ satisfy the infinite intersection property (IIP) and the decorrelation property (DCP), and hence (P5*).
\end{thm}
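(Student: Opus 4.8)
The plan is to verify the two sufficient conditions named in Definition~\ref{def:d1d2}, the infinite intersection property (IIP) and the decorrelation property (DCP), and then invoke the reduction (established when these conditions are introduced) that (IIP) and (DCP) together imply (P5*). As in the Gaussian case of Theorem~\ref{thm:gaussianMR}, the two hypotheses play separate roles: condition (1), a uniform lower bound on finite-dimensional conditional probabilities, is a strong positivity assumption that drives (IIP); condition (2), the quantitative decay of the variance of conditional probabilities of tail events, is the genuine decorrelation input that drives (DCP). Throughout I write $X_{\le K}$ and $X_{>K}$ for the index sets $\bigcup_{k\le K}X_k$ and $\bigcup_{k>K}X_k$, and let $\sigma(X_{>K})$ denote the $\sigma$-algebra generated by the corresponding random variables.

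For (IIP), I would first observe that condition (1), applied over all $I$ and $J$, forces every finite-dimensional joint mass function of the process to be strictly positive. This is exactly the classical hypothesis under which the ordinary (finite) intersection property holds for a discrete distribution, so the relation $\cdot \ci_X \cdot \mid \cdot$ enjoys the intersection property on all finite sub-collections. Given a partition $X=\bigcup_k X_k$ into finite blocks together with the hypotheses $X_k \ci_X Y \mid Z\cup\bigl(\bigcup_{j\neq k}X_j\bigr)$, I would then combine blocks one at a time by induction on $K$: a single application of the intersection property upgrades $X_{\le K}\ci_X Y\mid Z\cup X_{>K}$ together with $X_{K+1}\ci_X Y\mid Z\cup X_{\le K}\cup X_{>K+1}$ to $X_{\le K+1}\ci_X Y\mid Z\cup X_{>K+1}$. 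This yields the family of truncated statements $X_{\le K}\ci_X Y\mid Z\cup X_{>K}$ for every finite $K$, which is the content provided by (IIP) for the limiting argument below.

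For (DCP) I would pass to the limit $K\to\infty$ in these truncated statements. Fix a cylinder function $f$ depending on finitely many coordinates and a bounded $\sigma(Y)$-measurable $g$; for all $K$ large the truncated conditional independence gives $\E[fg\mid \sigma(X_Z)\vee\sigma(X_{>K})]=\E[f\mid \sigma(X_Z)\vee\sigma(X_{>K})]\,\E[g\mid\sigma(X_Z)\vee\sigma(X_{>K})]$. Since the blocks are finite and disjoint, only finitely many meet any initial segment $\{1,\dots,N\}$, so $\min X_{>K}\to\infty$ and hence $\sigma(X_{>K})\subseteq\sigma(X_{m_K},X_{m_K+1},\dots)$ with $m_K\to\infty$. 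The $\sigma$-algebras $\sigma(X_Z)\vee\sigma(X_{>K})$ are decreasing, so Lévy's downward theorem gives convergence of each conditional expectation to its analogue given $\mathcal{T}=\bigcap_K\bigl(\sigma(X_Z)\vee\sigma(X_{>K})\bigr)$. The crux is to show $\E[f\mid\mathcal{T}]=\E[f\mid\sigma(X_Z)]$ for cylinder $f$: taking $B=Z$ and $m$ large enough that $f$ is $\sigma(X_1,\dots,X_m)$-measurable, condition (2) bounds $\var\bigl(\P(A\mid X_1,\dots,X_m,X_Z)\mid X_Z=x_Z\bigr)$ by $g_{m,Z,x_Z}(n)$ for every $A\in\mathcal{F}_{Z,-n}$; any $A\in\mathcal{T}$ lies in $\mathcal{F}_{Z,-n}$ for all $n$, and $g_{m,Z,x_Z}(n)\to0$ forces this variance to vanish, so $\mathcal{T}$ is conditionally independent of $\sigma(X_1,\dots,X_m)$ given $\sigma(X_Z)$, whence $\E[f\mid\mathcal{T}]=\E[f\mid\sigma(X_Z)]$. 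Feeding this into the limiting factorization and then conditioning on $\sigma(X_Z)$ yields $\E[fg\mid\sigma(X_Z)]=\E[f\mid\sigma(X_Z)]\,\E[g\mid\sigma(X_Z)]$ for all cylinder $f$, and a monotone-class argument extends this to all bounded $\sigma(X)$-measurable $f$, i.e.\ $X\ci_X Y\mid Z$, which is (DCP).

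The main obstacle is this last limiting step rather than the intersection bookkeeping. Two points require care: first, matching the residual unions $X_{>K}$ to the suffix-based tail $\sigma$-algebras $\mathcal{F}_{Z,-n}$ of condition (2), so that $\mathcal{T}$ is genuinely controlled by the far tail; and second, the passage from approximate decorrelation (a variance bounded by $g_{m,Z,x_Z}(n)$) to exact conditional independence, obtained by intersecting over all $n$ and using $g_{m,Z,x_Z}(n)\to0$ for $\P$-a.e.\ $x_Z$. Discharging the almost-sure qualifiers in condition (2) uniformly over the countable family of cylinder test functions, and justifying the interchange of the product limit in the factorization (each factor is bounded and converges in $L^2$ by the downward martingale theorem), are the remaining technical details.
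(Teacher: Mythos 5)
Your overall architecture matches the paper's: verify (IIP) from condition (1) and (DCP) from condition (2), then invoke Theorem \ref{thm:d1d2thm}. The (DCP) half is essentially right --- your observation that an event $A$ in the tail $\sigma$-algebra lies in $\mathcal{F}_{Z,-n}$ for every $n$, so that condition (2) forces $\var(\P(A\mid X_1,\dots,X_m,X_Z)\mid X_Z=x_Z)=0$ and hence (via the L\'evy zero-one law) $1_A=\P(A\mid X_Z)$ a.s., is exactly the content of the paper's Lemma \ref{lem:d2new}. (You do mislabel the final factorization $X\ci Y\mid Z$ as ``(DCP)''; that conclusion is (P5*), and the downward-martingale limiting argument you describe to reach it is really the proof of Theorem \ref{thm:d1d2thm}, which you are entitled to cite rather than redo. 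That is only a bookkeeping issue.)

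The genuine gap is in your verification of (IIP). From condition (1) you derive only that every finite-dimensional mass function is positive, which yields the intersection property when the conditioning set is \emph{finite}. But (IIP) as defined in Definition \ref{def:d1d2} demands the implication for an arbitrary, possibly infinite, conditioning set $D$, and your own induction step needs exactly this: to pass from $X_{\le K}\ci Y\mid Z\cup X_{>K}$ and $X_{K+1}\ci Y\mid Z\cup X_{\le K}\cup X_{>K+1}$ to $X_{\le K+1}\ci Y\mid Z\cup X_{>K+1}$, you must apply the intersection property conditionally on $Z\cup X_{>K+1}$, which is infinite whenever $Z$ is infinite or the partition has infinitely many blocks. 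Positivity of finite marginals says nothing by itself about the regular conditional distribution given the infinite $\sigma$-algebra $\sigma(X_D)$. The paper closes this hole by exploiting that the bound in condition (1) is \emph{uniform} in the finite conditioning set $J$ (the constant $c_I$ depends only on $I$), so the L\'evy zero-one law lets one pass to the limit along $X_{d_1},\dots,X_{d_r}$ and conclude $\P((X_A,X_B,X_C)=v\mid X_D)>c_I$ a.s.\ even for infinite $D$; one then runs the density-factorization argument of Lemma \ref{lem:d1} with respect to this a.s.-positive conditional mass function. Without this martingale step your argument never leaves the finite-conditioning world, and the block-combination induction --- and hence the whole proof --- does not go through.
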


\begin{rek}
The quantity $\mathbb{P}(A|X_1,...,X_m,X_B)$ is a random variable depending on $X_1, ..., X_m$ and $X_B$, so the conditional variance from line \eqref{ln:varvar} is a sensible quantity.
\end{rek}

\begin{rek}
The conditions (1) and (2) from Theorem \ref{thm:discreteMR} serve a similar role to the conditions from Theorem \ref{thm:gaussianMR}. In particular, condition (1) again ensures ``nonsingularity'' of the distribution, permitting the existence of a positive density. Also, condition (2) corresponds once again to a quantitative formulation of a ``decorrelation'' condition. More specifically, it requires that if after conditioning on a set of variables $X_B$ an event $A$ only depends on variables sufficiently far from $X_1,...,X_m$, then $A$ is ``nearly independent'' of $X_1,...,X_m$.
\end{rek}

For each of the two theorems above we provide multiple examples for which the respective required conditions are satisfied. These examples will serve to demonstrate the flexibility of our extended framework for the study of infinite-dimensional graphical models.

\section{Preliminaries} \label{sec:background}

\subsection{Graph Theory}
\begin{defi}
An \emph{undirected graph} $\mathcal{G}$ is a pair of objects $(V,E)$, where $V$ is the set of vertices of $\mathcal{G}$, and $E$ is a subset of $V \times V$ containing the edges. By convention, $E$ does not contain edges of the form $(i,i)$ or multiple edges between any two vertices, and we do not distinguish between the edges $(i,j)$ and $(j,i)$. We now introduce a number of additional definitions related to undirected graphs.

\bi
\item Let $i$ be a vertex in $V$. A vertex $j$ is called \emph{adjacent} to $i$, or a \emph{neighbor} of $i$, if $(i,j) \in E$. If $j$ is a neighbor of $i$, we write $i \sim_{\mathcal{G}} j$, and otherwise we write $i \not \sim_{\mathcal{G}} j$. The set of all neighbors of $i$ is denoted by $\neigh(i)$. We also define the \emph{closure} set $\cl(i) :=  \{i\} \cup \neigh(i)$.
\item Given a graph $\mathcal{G} = (V,E)$ and a subset $A \subseteq V$, the \emph{induced subgraph} on $A$ is defined to be the graph $(A,E \cap (A \times A))$. 
\item The \emph{degree} of a vertex $i \in V$ is the cardinality of the neighbor set of $i$. That is, the degree of a vertex $i$ is equal to 
$$\deg(i) = |\{j \in V \ | \ i \sim j \}|.$$
\item Let $A,B,C \subseteq V$ be three nonempty subsets of $V$. We say that $C$ \emph{separates} $A$ from $B$ if every path from a vertex $a \in A$ to a vertex $b \in B$ contains some vertex in $C$.
\ei

\end{defi}

\subsection{Conditional Independence} \label{subsec:CI}

\begin{defi} \label{def:condIndep}
Suppose $X,Y,$ and $Z$ are random variables over the probability space $\mathcal{X}$. Suppose also that the joint probability distribution of $(X,Y,Z)$ has a density $f$ with respect to some underlying measure $\mu$ on the range of $(X,Y,Z)$. Then the variables $X$ and $Y$ are said to be conditionally independent given $Z$, denoted $X \ci Y \ | \ Z$, if and only if there is a factorization
$$f_{(X,Y)|Z}(x,y,z) = f_{X|Z}(x,z)f_{Y|Z}(y,z)$$
which holds for a.e.\ $x$, $y$, and $z$. See \cite{Dawid2} for more details.
\end{defi}

Consider a collection of random variables $\{X_v \ | \ v \in V\}$. Given a subset $A \subseteq V$ let $X_A$ denote the random vector $(X_v)_{v \in A}$, and given subsets $A, B, C \subseteq V$, with a slight abuse of notation we shall write $A \ci B \ |\ C$ to mean $X_A \ci X_B \ | \ X_C.$

Now, suppose that $A,B,C,$ and $D$ are disjoint collections of random variables. Then we may consider the following properties, known as the \emph{axioms of conditional independence}:

\bi
\item (P1) \emph{Symmetry:} $A \ci B \ | \ C$ implies $B \ci A \ | \ C$;
\item (P2) \emph{Decomposition:} $A \ci (B,C) \ | \ D$ implies $A \ci B \ | \ D$ and $A \ci C \ | \ D$;
\item (P3) \emph{Weak Union:} $A \ci (B,C) \ | \ D$ implies $A \ci B \ | \ (C,D)$;
\item (P4) \emph{Contraction:} $A \ci B \ | \ (C,D)$ and $A \ci C \ | \ D$ implies $A \ci (B,C) \ | \ D$.
\item (P5) \emph{Intersection:} $A \ci B \ | \ (C,D)$ and $A \ci C \ | \ (B,D)$ implies $A \ci (B,C) \ | \ D$.
\ei

We note that properties (P1) -- (P4) can be proved for arbitrary collections of random variables (see Lemma \ref{lem:p1p4} below). Property (P5) does not hold under such extreme generality, but the following result does hold (as verified in \cite{Lauritzen}, for example).

\begin{prop}[\cite{Lauritzen}] \label{prop:P1P5} 
Suppose for all $v \in V$ that $X_v$ is a random variable. If every finite subcollection of the $X_v$ has a joint density which is everywhere positive, then (P1) -- (P5) hold for all finite subsets $A, B, C, D \subseteq \{X_v\ | \ v \in V\}$.
\end{prop}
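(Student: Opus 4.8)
The plan is to reduce the statement to a single finite collection of random variables and then treat (P1)--(P4) and (P5) separately, since only the latter requires positivity. First I would observe that because $A,B,C,D$ are all finite, every conditional independence statement among them involves only the finite subcollection indexed by $A\cup B\cup C\cup D$; by hypothesis this subcollection has an everywhere-positive joint density $f$ with respect to the relevant underlying measure. Thus it suffices to establish (P1)--(P5) for finitely many random variables possessing a strictly positive joint density, and all subsequent manipulations may be carried out at the level of densities via Definition \ref{def:condIndep}.

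For (P1)--(P4), I would verify each axiom by direct density algebra, none of which needs positivity. Symmetry (P1) is immediate from the symmetry of the factorization $f_{A,B\mid C}=f_{A\mid C}\,f_{B\mid C}$. Decomposition (P2) follows by marginalizing the factorization $f_{A,B,C\mid D}=f_{A\mid D}\,f_{B,C\mid D}$ over the $C$-coordinates (or the $B$-coordinates). Weak union (P3) and contraction (P4) are obtained by combining the defining factorizations and re-expressing the conditional densities. These are precisely the semi-graphoid computations already recorded in Lemma \ref{lem:p1p4}, so I would simply invoke that lemma rather than repeat the routine bookkeeping, emphasizing that they hold with no positivity assumption.

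The hard part --- and the only place positivity is essential --- is the intersection axiom (P5). Starting from the two hypotheses $A\ci B\ | \ (C,D)$ and $A\ci C\ | \ (B,D)$, I would rewrite both in terms of the single conditional density $\psi(a,b,c,d):=f_{A\mid B,C,D}(a\mid b,c,d)$. The first hypothesis forces $\psi$ to be independent of $b$, so $\psi=g(a,c,d)$; the second forces $\psi$ to be independent of $c$, so $\psi=h(a,b,d)$. The crucial consequence of everywhere-positivity of $f$ is that the conditional support of $(B,C)$ given $(A,D)$ is a full product region, so $b$ and $c$ range independently --- exactly the feature that fails in the standard counterexamples. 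Exploiting this, I would fix a reference value $b_0$ and conclude $g(a,c,d)=\psi(a,b_0,c,d)=h(a,b_0,d)$, which is independent of $c$; hence $g(a,c,d)=\phi(a,d)$ for some $\phi$, and symmetrically in $b$. Therefore $\psi(a,b,c,d)=\phi(a,d)$ depends on neither $b$ nor $c$, and identifying $\phi(a,d)$ with $f_{A\mid D}(a\mid d)$ gives $f_{A\mid B,C,D}=f_{A\mid D}$, the factorization witnessing $A\ci (B,C)\ | \ D$.

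I would close by noting that positivity enters only through this product-support argument, which both explains why (P5) can fail without it and motivates the more delicate intersection-type conditions --- (P5*), and later (IIP) and (DCP) --- that become necessary once $V$ is infinite and no single finite positive density is available.
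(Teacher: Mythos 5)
Your proof is correct and follows essentially the same route as the source the paper relies on: the paper gives no proof of Proposition \ref{prop:P1P5} itself, deferring to \cite{Lauritzen}, and your reduction to a single finite subcollection with a strictly positive density, the invocation of Lemma \ref{lem:p1p4} for (P1)--(P4), and the product-support argument with a fixed reference value $b_0$ for (P5) constitute precisely the standard proof found there. The only step worth spelling out is the Fubini argument needed to select $b_0$ so that the almost-everywhere factorization identities of Definition \ref{def:condIndep} hold for almost every $(a,c,d)$, together with the one-line integration showing $\phi(a,d)=f_{A\mid D}(a\mid d)$; both are routine under everywhere-positivity.
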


\begin{rek}
In definition \ref{def:p1p5} we used the * in the labeling of properties (P1*) -- (P5*) and $\perp$ instead of $\ci$ to distinguish those properties as abstract properties of a ternary relation. This is as opposed to (P1) -- (P5) above which are specifically related to conditional independence.
\end{rek}

\begin{rek}
The only significant difference between (P1)--(P5) and (P1*)--(P5*) occurs in the statement of (P5*). As we will show in Proposition \ref{prop:P5}, (P5) and (P5*) are equivalent when considering a finite collection of random variables. However, the more nuanced property (P5*) is necessary in order to obtain probability theoretic results for \emph{infinite} collections of random variables, which cannot be obtained from (P5) alone.
\end{rek}

\subsection{Markov Properties and Graphical Models}
Suppose now that $\mathcal{G} = (V,E)$ is an undirected graph with $|V| < \infty$. Let $\textbf{X} = (X_1,...,X_{|V|})$ be a $|V|$-variate random variable. Then we say that $\textbf{X}$ satisfies the (P) pairwise, (L) local, or (G) global Markov properties with respect to the graph $\mathcal{G}$ provided that:
\bi
\item (P) for each pair $(i,j)$ of vertices,
$$i \not \sim_{\mathcal{G}} j \textrm{ (i.e., }i\textrm{ and }j\textrm{ not adjacent)} \Rightarrow X_i \ci X_j \ | \ X_{V\setminus \{i,j\}};$$
\item (L) for each vertex $i$,
$$X_i \ci X_{V\setminus \cl(i)} \ | \ X_{\neigh(i)},$$
\item (G) for each triple $(A,B,S)$ of disjoint subsets of $V$ with $S$ separating $A$ and $B$, we have
$$X_A \ci X_B \ | \ X_S.$$
\ei

The following foundational result of Pearl and Paz \cite{PearlPaz} shows that, under general circumstances, these three Markov properties are equivalent.

\begin{thm}[Pearl and Paz, 1985 \cite{PearlPaz}]
Suppose $V$ is a finite set, and that $\textbf{X}$ is a collection of random variables indexed by $V$ such that for any disjoint non-empty subsets $A,B,C,D \subseteq V$ the intersection property
$$\textrm{(P5) } X_A \ci X_B \ | \ X_{C \cup D} \textrm{ and } X_A \ci X_C \ | \ x_{B \cup D} \Rightarrow X_A \ci X_{B \cup C} \ | \ X_D$$
holds. Then with respect to a given graph $\mathcal{G}$,
$$ \textbf{X} \textrm{ satisfies (P) } \Leftrightarrow \textbf{X} \textrm{ satisfies (L) } \Leftrightarrow \textbf{X} \textrm{ satisfies (G) }.$$
\end{thm}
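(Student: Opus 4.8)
The plan is to prove the cycle of implications $(G) \Rightarrow (L) \Rightarrow (P) \Rightarrow (G)$, observing that the first two implications need no more than weak union $(P3)$ (valid for arbitrary collections by Lemma~\ref{lem:p1p4}), whereas the single hard implication $(P) \Rightarrow (G)$ is precisely where the intersection property $(P5)$ must be used. Throughout I may assume $A$ and $B$ nonempty, since otherwise the relevant statement is vacuous.

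For $(G) \Rightarrow (L)$, I would fix a vertex $i$: since any path from $i$ to $V \setminus \cl(i)$ must pass through a neighbour of $i$, the set $\neigh(i)$ separates $\{i\}$ from $V \setminus \cl(i)$, so applying $(G)$ to this triple is exactly the statement $(L)$. For $(L) \Rightarrow (P)$, suppose $i \not\sim_{\mathcal{G}} j$, so that $j \in V \setminus \cl(i)$; starting from $X_i \ci X_{V\setminus\cl(i)} \mid X_{\neigh(i)}$ and applying weak union $(P3)$ to absorb all of $(V\setminus\cl(i))\setminus\{j\}$ into the conditioning set yields $X_i \ci X_j \mid X_{V\setminus\{i,j\}}$, using that $\neigh(i) \cup \big((V\setminus\cl(i))\setminus\{j\}\big) = V\setminus\{i,j\}$.

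The substance lies in $(P) \Rightarrow (G)$, which I would prove by \emph{descending} induction on $n = |S|$, so that the inductive hypothesis supplies the conclusion for every separating set strictly larger than $S$. The base case $n = |V|-2$ forces $A=\{a\}$, $B=\{b\}$, and $S = V\setminus\{a,b\}$; separation then means $a\not\sim_{\mathcal{G}} b$, and $(P)$ gives $X_a \ci X_b \mid X_S$ directly. For the inductive step I split on whether $A\cup B\cup S = V$. If so, then $|A|+|B|\ge 3$, so (say) $|A|\ge 2$; choosing $a\in A$, both $S\cup\{a\}$ and $S\cup(A\setminus\{a\})$ still separate the relevant blocks and have size $>n$, so the hypothesis gives $X_{A\setminus\{a\}}\ci X_B \mid X_{S\cup\{a\}}$ and $X_a \ci X_B \mid X_{S\cup(A\setminus\{a\})}$, and $(P5)$ (after symmetry $(P1)$) glues these into $X_A \ci X_B \mid X_S$. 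If instead some $v \in V\setminus(A\cup B\cup S)$ exists, the key combinatorial fact is that $S$ separates $A$ from $B\cup\{v\}$ or separates $A\cup\{v\}$ from $B$ --- otherwise an $A$-to-$v$ path and a $v$-to-$B$ path, each avoiding $S$, would splice into a forbidden $A$-to-$B$ path avoiding $S$. Taking the former case, $S\cup\{v\}$ separates $A$ from $B$ and $S\cup B$ separates $A$ from $\{v\}$, both of size $>n$; the hypothesis yields $X_A\ci X_B \mid X_{S\cup\{v\}}$ and $X_A \ci X_v \mid X_{S\cup B}$, and $(P5)$ combines them into $X_A \ci X_{B\cup\{v\}}\mid X_S$, whence decomposition $(P2)$ gives $X_A\ci X_B\mid X_S$.

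I expect the genuine obstacle to be this last direction, and within it two interlocking points. First, the descending induction must be organized so that every separation statement invoked genuinely uses a \emph{larger} conditioning set, since otherwise the argument is circular; this is what dictates the choice to enlarge $S$ by a single outside vertex or by an entire neighbouring block. Second, the graph-theoretic splicing argument is what guarantees that an arbitrary vertex $v$ outside $A\cup B\cup S$ can always be absorbed into $A$ or into $B$ while preserving separation. Matching the four arguments of $(P5)$ to the correct blocks in each application is routine but error-prone, and is where I would verify the bookkeeping most carefully.
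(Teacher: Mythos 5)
Your proof is correct, but it takes a genuinely different route from the paper's. You run the classical Pearl--Paz argument: the two easy implications via separation and weak union, and then $(P)\Rightarrow(G)$ by descending induction on $|S|$, using the binary intersection property (P5) to glue together two separation statements whose conditioning sets are strictly larger; your case split on whether $A\cup B\cup S=V$, and the path-splicing argument showing that an outside vertex $v$ can always be absorbed into $A$ or into $B$ while preserving separation, are exactly the standard bookkeeping and check out. The paper instead obtains this theorem as a corollary of its infinite-dimensional machinery: Proposition \ref{prop:P5} first upgrades the binary (P5) to the multi-block general intersection property (P5*) by an ordinary induction on the number of blocks, and Theorem \ref{thm:CIREquiv} then proves $(P^*)\Rightarrow(G^*)$ with no induction on the separator at all --- it forms $\widetilde{A}$, the set of vertices reachable from $A$ by paths avoiding $S$, sets $\widetilde{B}=V\setminus(\widetilde{A}\cup S)$, and applies (P5*) twice to aggregate the pairwise statements $\{i\}\perp\{j\}\mid V\setminus\{i,j\}$ into $\widetilde{A}\perp\widetilde{B}\mid S$, finishing with decomposition. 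Indeed, the remark following Corollary \ref{thm:PearlPaz} explicitly identifies your reverse induction as the original Pearl--Paz route that the paper's argument is designed to replace. What your approach buys is a self-contained, elementary proof for finite $V$ that needs only the two-block form of (P5); what the paper's approach buys is that the component-splitting argument survives when $V$ is infinite, where a descending induction on $|S|$ has no largest case to start from. One small point to tighten: (P5) as stated is quantified over \emph{non-empty} disjoint $A,B,C,D$, whereas your applications can have $D=S=\emptyset$ (e.g.\ when $A$ and $B$ lie in different connected components); this is a convention issue shared with the paper's own formulation, but it deserves a sentence.
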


%\begin{rek}
%Each of the properties (P1) -- (P5) relates at most four collections of random variables. In order to establish the equivalence of ``infinite'' versions of the Markov properties, it will be necessary to relate infinitely many collections of random variables. We shall see that this need is addressed by the way in which (P5*) has been formulated.
%\end{rek}

\section{Graphoid Relations and Separation for Infinite Graphs} \label{sec:AGF}

We now introduce a graphical framework in which we can analyze the Markov properties from a purely set theoretic perspective which is independent of probability theory. Making this distinction in our mathematical treatment will allow us to understand better what drives the equivalence of the Markov properties. 

\subsection{Extended Graphoids and Semi-Graphoids}

Conditions (P1*) -- (P4*) are precisely the same as (P1) -- (P4) when the relation in question is that induced by conditional independence (as in Definition \ref{def:condIndep}). While the statements of (P5*) and (P5) are different, (P5*) is in fact a generalization of (P5) which is equivalent when $V$ is finite, a result stated formally in Proposition \ref{prop:P5} in the Supplemental section.

We now recall Definition \ref{defi:smp} of the set-theoretic analogues of the Markov properties for ternary relations, which we will ultimately consider for semi-graphoid and extended graphoid relations.

\begin{defi*}
If $V$ is any set, $\mathcal{G}$ is any undirected graph on $V$, and $\cdot \perp \cdot \ | \ \cdot$ is any ternary relation on $\mathcal{P}(V)$, we say that $(V,\perp)$ satisfies the pairwise (P*), local (L*), or global (G*) \emph{set-Markov properties} with respect to $\mathcal{G}$ provided that 
\bi
\item (P*) for each pair of vertices $\{i,j\} \subseteq V$, we have $i \not \sim_{\mathcal{G}} j \Rightarrow \{i\} \perp \{j\} \ | \ V \setminus \{i,j\};$
\item (L*) for each vertex $i$, we have $\{i\} \perp V \setminus \cl(i) \ | \ \neigh(i);$
\item (G*) for each triple (A,B,S) of disjoint subsets of $V$ with $S$ separating $A$ from $B$, we have $A \perp B \ | \ S.$
\ei

We say that (P*), (L*), or (G*) \emph{holds over} $\mathcal{G}$ when $(V,\perp)$ is clear from context.
\end{defi*}

Note that if $V$ is finite, and $\cdot \perp \cdot \ |\ \cdot$ is the conditional independence relation from Definition \ref{def:condIndep}, these set-Markov properties are precisely the usual probabilistic Markov properties. That is, (P) = (P*), (L) = (L*), and (G) = (G*). 

We now prove Theorem \ref{thm:CIREquiv}, which asserts the equivalence of the set-theoretic Markov properties under (P1*) -- (P5*).

\begin{thm*}
Suppose $V$ is any set, and $\cdot \perp \cdot \ | \ \cdot $ satisfies (P1*) -- (P4*). Then
$$(V,\perp) \textrm{ satisfies (G*) } \Rightarrow (V,\perp) \textrm{ satisfies (L*) }\Rightarrow (V,\perp) \textrm{ satisfies (P*)}.$$
If in addition $\cdot \perp \cdot \ | \ \cdot$ satisfies (P5*), then we also have
$$(V,\perp) \textrm{ satisfies (P*) } \Rightarrow (V,\perp) \textrm{ satisfies (G*)}.$$
\end{thm*}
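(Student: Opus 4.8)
The plan is to prove the three implications separately, following the classical Pearl--Paz argument but paying close attention to where infinite sets force us to invoke (P5*) rather than the ordinary intersection property (P5). Throughout I would use only the semi-graphoid axioms (P1*)--(P4*) for the two ``descending'' implications (G*)$\Rightarrow$(L*)$\Rightarrow$(P*), and reserve (P5*) for the harder ``ascending'' implication (P*)$\Rightarrow$(G*).

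For (G*)$\Rightarrow$(L*), I would fix a vertex $i$ and observe that $\neigh(i)$ separates $\{i\}$ from $V\setminus\cl(i)$ in $\mathcal{G}$: any path leaving $i$ must pass through a neighbor of $i$, and such a neighbor lies in $\neigh(i)$ unless it is the terminal vertex itself, which cannot happen since the target set $V\setminus\cl(i)$ excludes $\neigh(i)$. Applying (G*) to the triple $(\{i\},\,V\setminus\cl(i),\,\neigh(i))$ immediately yields $\{i\}\perp V\setminus\cl(i)\mid\neigh(i)$, which is exactly (L*). For (L*)$\Rightarrow$(P*), I would fix a non-adjacent pair $i\not\sim_\mathcal{G} j$ and start from $\{i\}\perp V\setminus\cl(i)\mid\neigh(i)$. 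Since $i\not\sim j$ we have $j\in V\setminus\cl(i)$, and I can write $V\setminus\cl(i)=\{j\}\cup\bigl(V\setminus(\cl(i)\cup\{j\})\bigr)$. Applying weak union (P3*) with $Y=\{j\}$ and $W=V\setminus(\cl(i)\cup\{j\})$ shifts $W$ into the conditioning set, giving $\{i\}\perp\{j\}\mid \neigh(i)\cup\bigl(V\setminus(\cl(i)\cup\{j\})\bigr)=V\setminus\{i,j\}$, which is (P*). The only subtlety here is a careful set-theoretic verification that the conditioning set telescopes to exactly $V\setminus\{i,j\}$, which holds because $\neigh(i)$ and $V\setminus(\cl(i)\cup\{j\})$ partition $V\setminus\{i,j\}$.

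The substantive part is (P*)$\Rightarrow$(G*), and this is where I expect the main obstacle to lie. The finite Pearl--Paz proof fixes a separating triple $(A,B,S)$ and argues by (reverse) induction on $|S|$ or equivalently on the number of vertices outside $A\cup B\cup S$, repeatedly using (P5) to merge a single vertex into $A$ or $B$. In the infinite setting this induction cannot terminate, so I would instead establish, using only (P1*)--(P4*) and (P*), that for every \emph{finite} subset $A'\subseteq A$ and finite subset $B'\subseteq B$ one has $A'\perp B'\mid S$ together with the right conditional statements relative to the remaining vertices; the genuinely infinite step is then assembled via (P5*). Concretely, I anticipate partitioning $A$ into singletons (or finite blocks) $A=\bigcup_k A_k$ and showing, for each $k$, a statement of the form $A_k\perp B\mid S\cup\bigl(\bigcup_{j\ne k}A_j\bigr)$; feeding these into (P5*) collapses them to $A\perp B\mid S$. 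Symmetrically handling $B$ may require a second application of (P5*).

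The hard part will be producing the per-block hypotheses $A_k\perp B\mid S\cup(\bigcup_{j\ne k}A_j)$ in a way that is valid for infinite $A,B,S$. My strategy is to reduce each such statement to a \emph{finite} separation problem: for the finite block $A_k$, the relevant conditioning set can be enlarged to all of $V\setminus A_k$ minus a carefully chosen finite target, and here the classical finite argument (building the statement up from pairwise non-adjacencies via (P3*) and (P4*), using that $S$ still separates within the relevant induced subgraph) can be run because only finitely many vertices move at each stage. I would need to check that separation in $\mathcal{G}$ is inherited correctly when absorbing $\bigcup_{j\ne k}A_j$ into the conditioning set, i.e.\ that no new connecting path is created, which follows since those vertices lie in $A$ and $S$ already separates $A$ from $B$. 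Once the finite building blocks are in place, (P5*) is precisely the tool that bridges from ``every finite piece is conditionally independent'' to the full infinite statement, and this is exactly the role for which (P5*) was formulated in Definition \ref{def:p1p5}.
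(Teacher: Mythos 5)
Your two descending implications (G*)$\Rightarrow$(L*)$\Rightarrow$(P*) are correct and essentially identical to the paper's. The gap is in (P*)$\Rightarrow$(G*). Your overall architecture --- partition each side into finite blocks, establish per-block hypotheses of the form $A_k\perp B\mid S\cup\bigl(\bigcup_{j\neq k}A_j\bigr)$, and feed them into (P5*) twice --- is the right shape, but you never actually produce those per-block hypotheses, and the strategy you sketch for doing so would fail. The pairwise property only hands you statements whose conditioning set is all of $V\setminus\{i,j\}$; this set contains every vertex of $V\setminus(A\cup B\cup S)$ as well as all of $B$ but one element. To reach $A_k\perp B\mid S\cup(A\setminus A_k)$ you must move infinitely many vertices out of the conditioning set, which cannot be done by ``running the classical finite argument'' with (P3*) and (P4*): each application of contraction moves only finitely many vertices and, worse, requires an auxiliary independence statement that is simply false for a leftover vertex $v\in V\setminus(A\cup B\cup S)$ that is adjacent to $A$ (such a $v$ can neither be dropped nor pushed to the $B$-side). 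The impossibility of completing this reverse induction in the infinite setting is exactly why the finite Pearl--Paz argument does not extend, so asserting that it ``can be run because only finitely many vertices move at each stage'' begs the question.

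The missing idea is the enlargement the paper uses: let $\widetilde A$ be the set of all vertices reachable from $A$ by a path avoiding $S$, and set $\widetilde B=V\setminus(\widetilde A\cup S)$. Then $S$ still separates $\widetilde A$ from $\widetilde B$, every vertex of $V$ lies in exactly one of $\widetilde A$, $S$, $\widetilde B$, and consequently for $i\in\widetilde A$ and $j\in\widetilde B$ the pairwise conditioning set decomposes as $V\setminus\{i,j\}=(\widetilde A\setminus\{i\})\cup S\cup(\widetilde B\setminus\{j\})$ --- which is \emph{already} in the exact format demanded by (P5*), with no paring down required. One application of (P5*) over the singleton partition of $\widetilde B$ gives $\{i\}\perp\widetilde B\mid(\widetilde A\setminus\{i\})\cup S$, a second over the singleton partition of $\widetilde A$ gives $\widetilde A\perp\widetilde B\mid S$, and two applications of decomposition (P2*) recover $A\perp B\mid S$. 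Without this reachability-based assignment of the leftover vertices to one side or the other, your per-block hypotheses are not derivable by the route you describe.
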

\begin{proof}
\ \\
\noindent (G*) $\Rightarrow$ (L*):\\
For any subset $A \subseteq V$, the neighbor set ne$(A)$ separates $A$ and $V \setminus (A \cup \ne(A))$, so (G*) trivially implies (L*).
\ \\

\noindent (L*) $\Rightarrow$ (P*):\\
Let $i,j$ be any two vertices in $V$ which are not adjacent. Then $j \not \in \cl(i)$. By (L*), $i \perp V\setminus \cl(i) \ | \ \neigh(i)$. Since $j \not \in \cl(i)$, we may conclude from (P3*) that $i \perp j \ | \ V \setminus \{i,j\}$, proving (P*).
\ \\

\noindent (P*) $\Rightarrow$ (G*):\\
Suppose that $A,B,S \subseteq V$ are such that $S$ separates $A$ and $B$. Let $\widetilde{A}$ be the set of vertices in $V$ that can be reached by a path starting at some vertex in $A$ and which does not include any vertex in $S$. Define $\widetilde{B} = V \setminus (\widetilde{A} \cup S)$, noting that $S$ separates $\widetilde{A}$ and $\widetilde{B}$, and that $B \subseteq \widetilde{B}$.

Fix a vertex $i \in \widetilde{A}$. Then, since $i$ is separated from $\widetilde{B}$ by $S$, we know in particular that for any $j \in \widetilde{B}$, $i$ and $j$ are not adjacent. Therefore, by (P*), we have $ i \perp j \ | \ V \setminus \{i,j\}$, and we also have that $V \setminus \{i,j\} = (\widetilde{A}\setminus\{i\}) \cup S \cup (\widetilde{B}\setminus \{j\})$. Since this holds for all $j$ in $\widetilde{B}$, we have by property (P5*) that $i \perp \widetilde{B} \ | \ (\widetilde{A} \setminus i) \cup S,$ and a second application of (P5*) gives $\widetilde{A} \perp \widetilde{B} \ | \ S$.

Finally, since $A \subseteq \widetilde{A}$ and $B \subseteq \widetilde{B}$, two applications of property (P2*) allow us to conclude $A \perp B \ | \ S,$ finishing the proof that (P*) $\Rightarrow$ (G*).
\end{proof}

\begin{rek}
We note explicitly that Theorem \ref{thm:CIREquiv} is valid for $V$ of arbitrary (even uncountable) cardinality. Moreover, although Theorem \ref{thm:CIREquiv} deals with $V$ of arbitrary cardinality, it is not necessary to invoke the axiom of choice to obtain the result.
\end{rek}

We are now in a position to deduce the result of Pearl and Paz \cite{PearlPaz} formulated in terms of set-Markov properties as a special case of Theorem \ref{thm:CIREquiv} above.

\begin{cor}[Pearl and Paz, 1985 \cite{PearlPaz}]\label{thm:PearlPaz}
Suppose $V$ is a finite set and $\cdot \perp \cdot \ | \ \cdot$ is a ternary relation on $\mathcal{P}(V)$ satisfying (P1*)--(P4*), and is such that for any disjoint non-empty subsets $A,B,C,D \subseteq V$ the intersection property
$$\textrm{(P5) } A \perp B \ | \ C \cup D \textrm{ and } A \perp C \ | \ B \cup D \Rightarrow A \perp B \cup C \ | \ D$$
holds. Then
$$ \cdot \perp \cdot \ | \ \cdot \textrm{ satisfies (P) } \Leftrightarrow \cdot \perp \cdot \ | \ \cdot \textrm{ satisfies (L) } \Leftrightarrow \cdot \perp \cdot \ | \ \cdot \textrm{ satisfies (G) }.$$
\end{cor}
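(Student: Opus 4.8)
The plan is to realize this corollary as a direct specialization of Theorem \ref{thm:CIREquiv}, with the only real work being to convert the finite intersection hypothesis (P5) into the infinite-style property (P5*) that Theorem \ref{thm:CIREquiv} actually requires. Since $V$ is finite, every subset of $V$ is finite, so any partition $X = \cup_k X_k$ appearing in (P5*) is automatically a \emph{finite} partition into finite pieces; this is exactly what makes the two intersection properties interchangeable. Concretely, I would first invoke Proposition \ref{prop:P5}, which asserts precisely that (P5) and (P5*) are equivalent for finite $V$, to deduce (P5*) from the assumed (P5). Combined with the standing hypotheses (P1*)--(P4*), the relation $\cdot \perp \cdot \ | \ \cdot$ then satisfies the full list (P1*)--(P5*).

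With (P1*)--(P5*) in hand, Theorem \ref{thm:CIREquiv} applies verbatim and yields both chains of implications, giving (G*) $\Rightarrow$ (L*) $\Rightarrow$ (P*) as well as (P*) $\Rightarrow$ (G*); hence (P*), (L*), and (G*) are mutually equivalent. Finally I would observe that, as already noted for finite $V$, the set-Markov properties coincide with the classically named properties appearing in the statement, i.e.\ (P) $=$ (P*), (L) $=$ (L*), and (G) $=$ (G*). The equivalence of the starred properties is therefore exactly the claimed equivalence of (P), (L), and (G), completing the reduction.

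The one step deserving attention is the passage (P5) $\Rightarrow$ (P5*) supplied by Proposition \ref{prop:P5}, which I would prove (were it not already available) by induction on the number $n$ of blocks in the partition $X = X_1 \cup \cdots \cup X_n$. The base case $n = 1$ is vacuous. For the inductive step, setting $X' = X_2 \cup \cdots \cup X_n$, the (P5*) hypotheses for the blocks $X_2, \ldots, X_n$ are exactly the hypotheses of the $(n-1)$-block case taken relative to the enlarged conditioning set $Z \cup X_1$, since $(Z \cup X_1) \cup (\cup_{2 \le j \le n,\, j \neq k} X_j) = Z \cup (\cup_{j \neq k} X_j)$; the induction hypothesis therefore yields $X' \perp Y \ | \ Z \cup X_1$. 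Together with the $k = 1$ hypothesis $X_1 \perp Y \ | \ Z \cup X'$, a single application of (P5) (after invoking symmetry (P1*) at the start and end) gives $X \perp Y \ | \ Z$. I expect this inductive bookkeeping to be the only genuinely substantive part of the argument, everything else being a matter of matching definitions; and crucially it is here, in reducing an arbitrary partition to finitely many applications of (P5), that finiteness of $V$ is indispensable.
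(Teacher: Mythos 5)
Your proposal is correct and follows the paper's own proof exactly: invoke Proposition \ref{prop:P5} to upgrade (P5) to (P5*) in the finite setting, then apply Theorem \ref{thm:CIREquiv} and identify the starred set-Markov properties with (P), (L), (G). Your sketched induction for (P5) $\Rightarrow$ (P5*) (peeling off $X_1$ and enlarging the conditioning set) is a valid minor variant of the paper's induction (which instead merges $A_{n+1}$ into each block $B_i = (A_i, A_{n+1})$ and finishes with (P2*)), and both hinge on the same single application of (P5) per step.
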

\begin{proof}
By Proposition \ref{prop:P5}, the assumption of (P5) is equivalent to that of (P5*). Thus, we may apply Theorem \ref{thm:CIREquiv}.
\end{proof}

\begin{rek} The only important difference between the proof of Theorem \ref{thm:CIREquiv} and the proof of Corollary \ref{thm:PearlPaz} from \cite{PearlPaz} is in the proof of (P*) $\Rightarrow$ (G*). For this step, \cite{PearlPaz} relies on the pairwise statement of (P5) and a reverse induction argument starting with the largest separator for which the claim fails. Our proof instead appeals to the specific way in which (P5*) is formulated. Combining the equivalence of (P5) and (P5*) from Proposition \ref{prop:P5} with the argument we used to prove Theorem \ref{thm:CIREquiv} would provide an alternate proof of the equivalence of the usual finite, probabilistic Markov properties.
\end{rek}

One could ask the question of whether the general intersection property (P5*) is the minimal assumption required to get equivalence of the set-Markov properties. We will now show that under (P1*) -- (P4*), the assumption (P5*) is equivalent to the statement (P*) $\Rightarrow$ (G*). 

\begin{prop} \label{prop:converse}
If (P1*)--(P4*) hold, and (P*) $\Rightarrow$ (G*), then (P5*) must hold. 
\end{prop}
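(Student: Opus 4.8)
The plan is to prove (P5*) by turning the given data into a graph over which the pairwise property (P*) holds and in which the conditioning set $Z$ separates $X$ from $Y$; the assumed implication (P*) $\Rightarrow$ (G*) then delivers $X \perp Y \mid Z$ at once. Concretely, suppose we are handed a partition $X = \bigcup_k X_k$ into finite blocks together with $X_k \perp Y \mid Z \cup (\bigcup_{j \neq k} X_j)$ for every $k$, and we must conclude $X \perp Y \mid Z$. I would take $\mathcal{G}$ to be the graph on $V$ in which every pair of vertices is adjacent \emph{except} those pairs with one endpoint in $X$ and the other in $Y$. Then the only non-adjacent pairs are the $X$--$Y$ pairs, and in the principal case $X \cup Y \cup Z = V$, deleting $Z$ leaves $X$ and $Y$ in distinct components, so $Z$ separates $X$ from $Y$.

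The substance of the argument is verifying that (P*) holds over $\mathcal{G}$, i.e.\ that $\{i\} \perp \{j\} \mid V \setminus \{i,j\}$ for each $i \in X$ and $j \in Y$. Here I would start from the hypothesis attached to the block $X_k \ni i$, namely $X_k \perp Y \mid Z \cup (X \setminus X_k)$, and migrate variables into the conditioning set using only (P1*)--(P3*): weak union (P3*) peels $Y \setminus \{j\}$ off the second argument to give $X_k \perp \{j\} \mid Z \cup (X \setminus X_k) \cup (Y \setminus \{j\})$; a symmetry (P1*) followed by a second application of weak union peels $X_k \setminus \{i\}$ off to give $\{j\} \perp \{i\} \mid Z \cup (X \setminus \{i\}) \cup (Y \setminus \{j\})$; a final symmetry rewrites this as $\{i\} \perp \{j\} \mid Z \cup (X \setminus \{i\}) \cup (Y \setminus \{j\})$. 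When $X \cup Y \cup Z = V$ this conditioning set is exactly $V \setminus \{i,j\}$, so (P*) holds over $\mathcal{G}$. Invoking the hypothesis (P*) $\Rightarrow$ (G*), the relation then satisfies (G*) over $\mathcal{G}$, and applying (G*) to the separated triple $(X,Y,Z)$ yields $X \perp Y \mid Z$, which is the conclusion of (P5*).

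The main obstacle I anticipate is precisely the reconciliation of conditioning sets in the verification of (P*): the pairwise property conditions on the \emph{entire} complement $V \setminus \{i,j\}$, whereas the available hypotheses only ever condition on subsets of $Z \cup X$, and no manipulation through (P1*)--(P4*) can introduce a vertex of $W := V \setminus (X \cup Y \cup Z)$ into a derived statement, since the axioms merely rearrange variables already present among the three arguments. Thus the derivation above produces $\{i\} \perp \{j\}$ conditioned on $Z \cup (X \setminus \{i\}) \cup (Y \setminus \{j\})$ rather than on all of $V \setminus \{i,j\}$, and the two coincide only when $W = \varnothing$; note also that if $W \neq \varnothing$ then in the graph above $W$ is adjacent to both $X$ and $Y$, so $Z$ would no longer separate them. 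The heart of the proof is therefore the case $X \cup Y \cup Z = V$ --- which is exactly the form in which (P5*) is invoked in the proof of Theorem \ref{thm:CIREquiv} --- and I would either argue that the general statement reduces to this case or, absent a clean reduction, establish it in this principal form, with the careful handling of the leftover vertices $W$ being the delicate point.
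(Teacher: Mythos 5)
Your proposal is correct and is essentially the paper's own argument: the paper likewise builds a graph in which (P*) holds by construction (non-adjacency is made to coincide with the available pairwise independence statements) and in which $Z$ separates $X$ from $Y$, and then invokes the assumed implication (P*) $\Rightarrow$ (G*) to read off $X \perp Y \mid Z$. The obstacle you flag concerning $W = V \setminus (X \cup Y \cup Z)$ is genuine but is not resolved by the paper either --- its proof takes the vertex set to consist of the blocks $X_i$ together with single collapsed nodes for $Y$ and $Z$ and never mentions leftover vertices, so it is really the case $X \cup Y \cup Z = V$ as well; your explicit weak-union/symmetry derivation of the required pairwise statements is, if anything, more careful than what is printed.
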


\begin{proof}
Suppose that (P*) implies (G*), and that for some $I \subseteq V$, and all $i \in I$, we have 
$$X_i \perp Y \ | \ (X_j)_{j \in I \setminus \{i\}}, Z.$$
Then let $\mathcal{G}$ be the graph on $V$ with one node corresponding to each of the $X_i$, and also nodes for $Y$ and $Z$, and let the edge set be chosen so that there is an edge between two nodes precisely if the (P*) condition for those two nodes holds. That is, if $a$ and $b$ are two nodes, then $(a,b) \in E$ if and only if $a \perp b \ | \ V \setminus \{a,b\}$. Then for this choice of $E$, (P*) trivially holds, and since we are assuming (P*) implies (G*), we also have that (G*) holds. By hypothesis, there are no edges from any $X_i$ to $Y$, and so $Z$ separates $X_I$ and $Y$. Therefore by (G*) we may conclude $X_I \perp Y \ | \ Z$. 
\end{proof}

The above proposition shows that (P5*) is a necessary property to obtain the equivalence of the set-Markov properties, and so is indeed minimal. Importantly, this affirms the notion that (P5*) is the correct generalization of (P5).

\section{Markov Properties for Infinite Sets of Random Variables}\label{sec:CI}

Our goal in this section is to obtain general conditions under which (P5*) holds for the conditional independence relation. This will allow us to verify the equivalence of the infinite probabilistic Markov properties. 

\subsection{Axioms of Conditional Probability}

The first step toward this goal requires us to introduce the most general definition of conditional independence, significantly generalizing Definition (\ref{def:condIndep}).

\begin{defi} \label{def:condIndepSA}
Given collections of random variables $A$, $B$, and $C$, we say that
$$A \ci B \ | \ C$$
provided the $\sigma$-algebras $\sigma(A)$, $\sigma(B)$, and $\sigma(C)$ satisfy
\be \mathbb{P}(E_1|\sigma(C))\mathbb{P}(E_2|\sigma(C)) = \mathbb{P}(E_1 \cap E_2|\sigma(C)) \label{ln:indep} \ee
for all events $E_1 \in \sigma(A)$ and $E_2 \in \sigma(B)$. 
\end{defi}

It turns out that properties (P1*)--(P4*) hold for any collection of random variables, even for this more general form of the conditional independence relation. 

\begin{lem}\label{lem:p1p4}
If $(X_v)_{v \in V}$ is any collection of random variables with $V$ at most countably infinite, then (P1*)--(P4*) hold with respect to the conditional independence relation $\cdot \ci \cdot \ | \ \cdot$. 
\end{lem}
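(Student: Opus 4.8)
**Proof plan for Lemma 2.20 (P1*–P4* for the σ-algebra form of conditional independence).**

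The plan is to prove each of the four properties directly from the defining equation \eqref{ln:indep}, working throughout with the σ-algebras $\sigma(A)$, $\sigma(B)$, etc., rather than with densities, since no density is assumed here. The key observation that makes everything go through for arbitrary (countable) collections is that for a collection $W = (Y,W)$ one has $\sigma(Y,W) = \sigma(\sigma(Y)\cup\sigma(W))$, and that the events of the form $E_Y\cap E_W$ with $E_Y\in\sigma(Y)$, $E_W\in\sigma(W)$ form a $\pi$-system generating $\sigma(Y,W)$. Thus verifying the factorization \eqref{ln:indep} on such a generating $\pi$-system and invoking a monotone-class / Dynkin argument will extend it to the full σ-algebra. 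I would set this $\pi$-system lemma up once at the start and reuse it for each property.

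First, (P1*) Symmetry is immediate: equation \eqref{ln:indep} is literally symmetric in $E_1$ and $E_2$, so swapping the roles of $A$ and $B$ changes nothing. Next, (P2*) Decomposition: assuming $A\perp(Y,W)\mid Z$, I would take an arbitrary $E_2\in\sigma(Y)\subseteq\sigma(Y,W)$ and apply the hypothesis to the pair $(E_1,E_2)$ with $E_1\in\sigma(A)$; since $\sigma(Y)\subseteq\sigma(Y,W)$, the factorization for all of $\sigma(Y,W)$ specializes to $\sigma(Y)$, giving $A\perp Y\mid Z$, and symmetrically $A\perp W\mid Z$. For (P3*) Weak Union, assuming $A\perp(Y,W)\mid Z$ I must show $A\perp Y\mid(Z,W)$, i.e.\ factorization of $\mathbb{P}(\,\cdot\mid\sigma(Z,W))$ over $E_1\in\sigma(A)$, $E_2\in\sigma(Y)$. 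Here I would use the tower property: for $E_W\in\sigma(W)$, integrating the hypothesis $\mathbb{P}(E_1\cap E_2\cap E_W\mid\sigma(Z))=\mathbb{P}(E_1\mid\sigma(Z))\,\mathbb{P}(E_2\cap E_W\mid\sigma(Z))$ against sets in $\sigma(Z)$ and rewriting conditioning on $\sigma(Z)$ in terms of conditioning on $\sigma(Z,W)$ yields the desired conditional factorization on the generating $\pi$-system $\{E_Y\cap E_W\}$, which then extends to $\sigma(Z,W)$.

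Finally, (P4*) Contraction is the most involved and is where I expect the main obstacle to lie: from $A\perp Y\mid(Z,W)$ and $A\perp W\mid Z$ one must assemble $A\perp(Y,W)\mid Z$, which requires establishing the factorization for events $E_2\in\sigma(Y,W)$, and it suffices (by the $\pi$-system argument) to treat $E_2 = E_Y\cap E_W$ with $E_Y\in\sigma(Y)$, $E_W\in\sigma(W)$. The difficulty is purely bookkeeping with nested conditional expectations: I would write $\mathbb{P}(E_1\cap E_Y\cap E_W\mid\sigma(Z)) = \mathbb{E}\!\left[\mathbb{P}(E_1\cap E_Y\cap E_W\mid\sigma(Z,W))\mid\sigma(Z)\right]$, apply the first hypothesis to split off $E_1$ inside the inner conditioning, then use the second hypothesis to decouple $E_W$ from $E_1$ at the level of $\sigma(Z)$, collapsing the product back to $\mathbb{P}(E_1\mid\sigma(Z))\,\mathbb{P}(E_Y\cap E_W\mid\sigma(Z))$. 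The care needed is to justify each pull-out step with the appropriate measurability (e.g.\ that $\mathbb{P}(E_1\mid\sigma(Z))$ is $\sigma(Z)$-measurable and hence passes through the outer conditional expectation) and then to extend from the $\pi$-system to all of $\sigma(Y,W)$ by monotone class. I would remark that the countability of $V$ guarantees the relevant σ-algebras and conditional expectations are well defined and that no pathologies force us outside the standard machinery; the axiom of choice plays no role.
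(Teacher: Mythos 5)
Your plan follows essentially the same route as the paper's own proof: a direct $\sigma$-algebra-level verification of the defining factorization, using the tower property and ``removing what is known,'' the key collapse identity $\mathbb{P}(E_1\mid\sigma(Z,W))=\mathbb{P}(E_1\mid\sigma(Z))$ derived from the auxiliary conditional independence, and a $\pi$-system/Dynkin extension from products $R\cap S$ to the full generated $\sigma$-algebra. The only blemish is a notational slip in (P3*), where the generating $\pi$-system for the test sets should consist of intersections $E_Z\cap E_W$ with $E_Z\in\sigma(Z)$, $E_W\in\sigma(W)$ (generating the conditioning $\sigma$-algebra $\sigma(Z,W)$), not $E_Y\cap E_W$.
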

\begin{proof}
The proof follows from measure theoretic arguments. Similar results are commonly referenced in the literature, and some proofs can be found, for example, in \cite{Dawid, meyer1966probability}. However, we have been unable to find a reference that lists each of the properties (P1*)--(P4*) at our level of generality. Thus, we include our own proofs in the Supplemental section for the sake of completeness.
\end{proof}

In the next section, we will consider the generality in which (P5*) holds.

\subsection{Sufficient Conditions for the General Intersection Property}

When the vertex set $V$ is finite, there are well-known sufficient conditions for the intersection property (P5) to hold (see \cite{Peters} for an approach involving densities and product measures, and see \cite{Dawid} for a more general $\sigma$-algebra oriented approach). In particular, (P5) holds if the random variables have a joint density which is positive everywhere. However, in the infinite setting, sufficient conditions for the general intersection property (P5*) to hold are not immediately evident. We address this by introducing a set of sufficient conditions under which the general intersection property holds for the conditional independence relation.

\begin{defi}\label{def:d1d2}
Let $\{X_i \ | \ i \in \mathbb{N}\}$ be a collection of real-valued random variables. The $X_i$ are said to satisfy the \emph{Infinite Intersection Property} (IIP) and \emph{Decorrelation Property} (DCP) respectively provided that:\\

(IIP) Given any (possibly infinite) subset $D \subseteq \mathbb{N}$, and any finite $A, B, C \subseteq \mathbb{N}\setminus D$, we have that $$(X_A \ci X_B \ | \ X_C, X_D \textrm{ and } X_A \ci X_C \ | \ X_B, X_D)  \Rightarrow X_A \ci (X_B,X_C) \ | \ X_D.$$ \ 

(DCP) Given any (potentially infinite) subset $D \subseteq \mathbb{N}$, and any event $$E \in \cap_{n} \sigma(X_D,X_n,X_{n+1},X_{n+2},...),$$ there exists an event $E' \in \sigma(X_D)$ such that $\mathbb{P}(E \Delta E') = 0$.

\end{defi}

\begin{rek} \label{rek:d2}
In the finite-dimensional setting, the infinite intersection property (IIP) reduces precisely to (P5), since in that case all subsets $A,B,C,$ and $D$ under consideration will be finite. In addition, the decorrelation property (DCP) holds trivially in the finite-dimensional setting since, in that case,
$$\cap_n \sigma(X_D, X_n, X_{n+1},...) = \sigma(X_D).$$
Thus, the assumption of both the infinite intersection property (IIP) and decorrelation property (DCP) reduces precisely to (P5) in the finite-dimensional setting. As we shall see, (IIP) and (DCP) are also sufficient to obtain (P5*) in the infinite-dimensional setting.
\end{rek}

\begin{rek} \label{rek:dp}
One can think of (IIP) as an intermediate infinite extension of (P5) which is necessary for, but not quite as strong as (P5*), and hence is more readily verifiable. (DCP) is not a necessary condition for (P5*), but provides a quantitative formulation of the idea that the infinite collection of variables is sufficiently ``decorrelated,'' which in turn allows the verification of (P5*). 
\end{rek}

\begin{rek}
It may appear that verifying (DCP) for $D = \emptyset$ implies (DCP) for all other possible choices of $D$, with the key step in such an argument being
$$\cap_n \sigma(X_D,X_n,X_{n+1},...) = \sigma(X_D, \cap_n \sigma(X_n,X_{n+1},...)).$$
However, the above expression does not hold in general, even if equality is weakened to equivalence, in the sense that two $\sigma$-algebras $\mathcal{F}$ and $\mathcal{G}$ are equivalent if for any $A \in \mathcal{F}$ there exists $B \in \mathcal{G}$ with $\mathbb{P}(A \Delta B) = 0$, and vice versa (some conditions under which such an equivalence holds are listed in \cite{Weizsacker}). Because this equality does not always hold, it is indeed necessary to verify (DCP) for all subsets $D \subseteq \mathbb{N}$.
\end{rek}

\begin{thm} \label{thm:d1d2thm}
Suppose that $\{X_n \ | \ n \in \mathbb{N}\}$ is a collection of random variables satisfying properties (IIP) and (DCP). Then the ternary relation $\cdot \ci_X \cdot \ | \ \cdot$ corresponding to conditional independence satisfies (P5*). 
\end{thm}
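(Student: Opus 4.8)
The plan is to bridge from the single-block hypotheses of (P5*) to the full statement in two stages: a finite induction powered by (IIP), followed by a limiting argument powered by (DCP). Throughout, write $W_n = \bigcup_{k \le n} X_k$ and $R_n = \bigcup_{k > n} X_k$, so that $X = W_n \cup R_n$ for every $n$, the sets $W_n$ increase to $X$, and the sets $R_n$ decrease. Before starting, I would reduce to the case that $Y$ is finite: by (P2*) the hypothesis $X_k \perp Y \mid Z \cup \bigcup_{j \ne k} X_j$ restricts to $X_k \perp F \mid Z \cup \bigcup_{j \ne k} X_j$ for any finite $F \subseteq Y$, and since $\sigma(X_Y)$ is generated by the algebra $\bigcup_{F} \sigma(X_F)$ over finite $F \subseteq Y$, a monotone-class (Dynkin) argument recovers $X \ci Y \mid Z$ from $X \ci F \mid Z$ for all such $F$. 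So fix a finite $F \subseteq Y$ and aim to prove $X \perp F \mid Z$.

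First I would establish, by induction on $n$, the finite-union statement $W_n \perp F \mid Z \cup R_n$. The base case $n=1$ is exactly the hypothesis for $k=1$, since $R_1 = \bigcup_{j \ne 1} X_j$. For the inductive step, note $R_n = X_{n+1} \cup R_{n+1}$, so the induction hypothesis reads $W_n \perp F \mid Z \cup X_{n+1} \cup R_{n+1}$ while the hypothesis for $k = n+1$ reads $X_{n+1} \perp F \mid Z \cup W_n \cup R_{n+1}$. Applying (P1*) to orient both statements around $F$ and then invoking (IIP) with $A = F$, $B = W_n$, $C = X_{n+1}$ (all finite) and $D = Z \cup R_{n+1}$ (possibly infinite) yields $F \perp W_{n+1} \mid Z \cup R_{n+1}$, i.e.\ $W_{n+1} \perp F \mid Z \cup R_{n+1}$. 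This is precisely where (IIP) is used, and the finiteness of the blocks $X_k$ and of $F$ is exactly what makes it applicable.

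The main obstacle is the passage $n \to \infty$, where simultaneously the conditioning set shrinks from $Z \cup R_n$ to $Z$ and the left argument grows from $W_n$ to $X$. The key observation is that the conditioning $\sigma$-algebras $\mathcal{H}_n := \sigma(X_Z, X_{R_n})$ form a \emph{decreasing} sequence, and I claim $\bigcap_n \mathcal{H}_n$ coincides with $\sigma(X_Z)$ modulo null sets. The inclusion $\sigma(X_Z) \subseteq \bigcap_n \mathcal{H}_n$ is immediate; for the reverse I would exploit that the blocks are finite: given $N$, only finitely many blocks meet $\{1,\dots,N-1\}$, so for all large $n$ we have $R_n \subseteq \{N, N+1, \dots\}$, whence $\mathcal{H}_n \subseteq \sigma(X_Z, X_N, X_{N+1}, \dots)$. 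Letting $N \to \infty$ gives $\bigcap_n \mathcal{H}_n \subseteq \bigcap_N \sigma(X_Z, X_N, X_{N+1}, \dots)$, and (DCP) with $D = Z$ identifies this tail with $\sigma(X_Z)$ up to null sets. This reordering step --- aligning the block-indexed tail $R_n$ with the natural-order tail of $\mathbb{N}$ on which (DCP) is phrased --- is the crux, since the partition need not respect the order of $\mathbb{N}$.

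With the tail identified, I would fix events $E_2 \in \sigma(X_F)$ and $E_1 \in \sigma(X_{W_m})$ and take $n \ge m$ in the factorization $\mathbb{P}(E_1 \cap E_2 \mid \mathcal{H}_n) = \mathbb{P}(E_1 \mid \mathcal{H}_n)\,\mathbb{P}(E_2 \mid \mathcal{H}_n)$ supplied by the induction. Reverse martingale convergence along $\mathcal{H}_n \downarrow \bigcap_n \mathcal{H}_n$, together with the identification of this intersection with $\sigma(X_Z)$, sends each factor to its $\sigma(X_Z)$-conditional counterpart almost surely; since the factors are bounded by $1$, the product converges as well, giving $\mathbb{P}(E_1 \cap E_2 \mid \sigma(X_Z)) = \mathbb{P}(E_1 \mid \sigma(X_Z))\,\mathbb{P}(E_2 \mid \sigma(X_Z))$. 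As $\bigcup_m \sigma(X_{W_m})$ is an algebra generating $\sigma(X_X)$, one final monotone-class argument in $E_1$ extends this to all $E_1 \in \sigma(X_X)$, yielding $X \perp F \mid Z$; combined with the reduction of the first paragraph this gives (P5*). The delicate points to verify carefully are the reverse-martingale step and the two monotone-class extensions, but the conceptual heart is the (DCP)-based collapse of $\bigcap_n \mathcal{H}_n$ onto $\sigma(X_Z)$.
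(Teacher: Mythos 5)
Your proposal is correct and follows essentially the same route as the paper's proof: iterated application of (IIP) for the finite stage, backward martingale convergence along the decreasing conditioning $\sigma$-algebras, (DCP) to collapse the tail onto $\sigma(X_Z)$ modulo null sets, and monotone-class/approximation arguments to pass to infinite $X$ and infinite $Y$. The one point where you are slightly more explicit than the paper is the observation that finiteness of the blocks forces $R_n \subseteq \{N, N+1, \dots\}$ for large $n$, which is what aligns the block-indexed tail with the natural-order tail on which (DCP) is stated.
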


\begin{proof}
Let $I, D \subseteq \mathbb{N}$ be arbitrary (potentially infinite) subsets, let $C \subseteq \mathbb{N}$ be finite, and suppose that $X_i \ci X_C \ | \ (X_j)_{j \in I \setminus \{i\}}, X_D$ for all $i \in I$. By iterative application of property (IIP) we may conclude that for any finite $J \subseteq I$,
$$X_J \ci X_C \ | \ X_{I \setminus J}, X_D.$$
We may therefore assume that $I$ is infinite, since the above verifies (P5*) when $I$ is finite. 

Let $I = \{i_1,i_2,i_3,...\}$, and let $I_n = \{i_n,i_{n+1},i_{n+2},...\}$. Applying property (P2) gives 
$$X_J \ci X_C \ | \ X_{I \setminus K}, X_D$$
for any $K$ satisfying $J \subseteq K \subseteq I$, and so for all sufficiently large $n$, we have 
$$X_J \ci X_C \ | \ X_{I_n}, X_D.$$ 

Now, let $A$ be an event in $\sigma(X_J)$ and $B$ an event in $\sigma(X_C)$. Then by the backward martingale convergence theorem (for a reference, see e.g.\ \cite{Klenke}), it holds pointwise that
$$\lim_{n\rightarrow \infty} \mathbb{P}(A | X_{I_n},X_D) = \mathbb{P}(A | \mathcal{F}),$$
$$\lim_{n\rightarrow \infty} \mathbb{P}(B | X_{I_n},X_D) = \mathbb{P}(B | \mathcal{F}),\textrm{ and}$$
$$\lim_{n\rightarrow \infty} \mathbb{P}(A \cap B | X_{I_n},X_D) = \mathbb{P}(A \cap B | \mathcal{F}),$$
where $\mathcal{F} = \cap_n \sigma(X_{I_n},X_D)$. Thus, since the independence factorization holds almost surely for all sufficiently large $n$, we have that almost surely $\mathbb{P}(A|\mathcal{F})\mathbb{P}(B|\mathcal{F}) = \mathbb{P}(A \cap B|\mathcal{F})$.

Now, note that $\sigma(X_D) \subseteq \mathcal{F}$, and by property (DCP), for any $E \in \mathcal{F}$, there exists $E' \in \sigma(X_D)$ satisfying $\mathbb{P}(E \Delta E') = 0$. We may therefore apply Lemma \ref{lem:CEE}, and obtain that $\mathbb{P}(A|\mathcal{F}) = \mathbb{P}(A|X_D)$, and similarly for $B$ and $A \cap B$, and so we have $A \ci B \ | \ X_D$.

Since $A$ and $B$ were arbitrary elements of $\sigma(X_J)$ and $\sigma(X_C)$ respectively, we have that $X_J \ci X_C \ | \ X_D$ for all finite $J \subseteq I$. 

Now, let $E \in \sigma(X_I)$. By Proposition \ref{prop:algapprox}, there is a sequence of sets $E_n \in \sigma(X_{i_1},...,X_{i_n})$ such that $\mathbb{P}(E \Delta E_n) \rightarrow 0$. Since $E_n \in \sigma(X_J)$ for a finite $J \subseteq I$, we have $E_n \ci X_C \ | \ X_D$. Let $F \in \sigma(X_C)$ be arbitrary. Then
\be \mathbb{P}(E_n|X_D)\mathbb{P}(F|X_D)-\mathbb{P}(E_n\cap F|X_D) = 0 \label{ln:lims} \ee
for all $n$ almost surely. Since $\mathbb{P}(E \Delta E_n) \rightarrow 0$, we have that $\mathbb{E}[1_{E\Delta E_n}] \rightarrow 0$. So $\mathbb{E}[\mathbb{E}[1_{E\Delta E_n} | X_D]] \rightarrow 0$, and since $\mathbb{E}[1_{E \Delta E_n} | X_D] = \mathbb{P}(E\Delta E_n | X_D) \geq 0$, we may conclude that $\mathbb{P}(E\Delta E_n | X_D) \rightarrow 0$ almost surely. Thus, since 
$$ |\mathbb{P}(E| X_D) - \mathbb{P}(E_n | X_D)| \leq |\mathbb{P}(E \setminus E_n| X_D)| + |\mathbb{P}(E_n \setminus E | X_D)| = |\mathbb{P}(E \Delta E_n | X_D)|,$$
we may conclude that $\mathbb{P}(E_n|X_D) \rightarrow \mathbb{P}(E|X_D)$ almost surely. A similar argument works to show that $\mathbb{P}(E_n \cap F|X_D) \rightarrow \mathbb{P}(E \cap F|X_D)$ almost surely, and therefore we may obtain from line \eqref{ln:lims} that almost surely,
$$\mathbb{P}(E|X_D)\mathbb{P}(F|X_D)-\mathbb{P}(E\cap F|X_D) = 0$$
This finishes the proof that $X_I \ci X_C \ | \ X_D$ for finite $C$.

Suppose now that $C$ is infinite. Then for any finite subset $C' \subseteq C$, we have $X_i \ci X_{C'} \ | \ (X_j)_{j \in I \setminus\{i\}}, X_D$, and so the above arguments show that $X_I \ci X_{C'} \ | \ X_D$ for any finite $C' \subseteq C$. Note that $\mathcal{C}:= \{ E \in \sigma(X_C) \ | \ E \in \sigma(X_{C'}) \text{ for some finite } C' \subseteq C\}$ is a $\pi$-system, and that for any event $E_I \in \sigma(X_I)$, the set of events $E \in \sigma(X_C)$ satisfying 
\be \mathbb{P}(E_I | X_D) \mathbb{P}(E | X_D) = \mathbb{P}(E_I \cap E \ | \ X_D) \label{ln:dpl} \ee
forms a $\lambda$-system containing $\mathcal{C}$. We may then conclude by Dynkin's $\pi$-$\lambda$ theorem that line \eqref{ln:dpl} is satisfied for all $E_I \in \sigma(X_I)$ and all $E \in \sigma(\mathcal{C}) = \sigma(X_C)$, and so $X_I \ci X_C \ | \ X_D$ even if $C$ is infinite. Thus we have demonstrated (P5*).
\end{proof}

The following result provides a convenient way to verify (DCP) in practice.

\begin{lem} \label{lem:d2new}
Let $(X_n)_{n \in \mathbb{N}}$ be a collection of random variables satisfying the following: \\
For any $D \subseteq \mathbb{N}$, let $\mathcal{F}_{D,-n} = \sigma(X_D,X_n,X_{n+1},...)$. For any $m \in \mathbb{N}$, and $\mathbb{P}$-a.e.\ value of $X_D = x_D$, there is a function $g_{m,D,x_D}(n)$ satisfying
\ben 
\item  $\lim_{n \rightarrow \infty}g_{m,D,x_D}(n) = 0$, and
\item  For any $A \in \mathcal{F}_{D,-n}$ we have
\be \var(\mathbb{P}(A|X_1,...,X_m,X_D)|X_D=x_D) \leq g_{m,D,x_D}(n) \textrm{ a.s.}. \label{ln:varineq} \ee
\een
Then the $(X_n)_{n \in \mathbb{N}}$ satisfy property (DCP).

More strongly, for (DCP) to hold, it is sufficient that inequality \eqref{ln:varineq} be valid for all $A \in \mathcal{F}_{D,-n,-N}$ (for all  $N > n$), where
$$\mathcal{F}_{D,-n,-N}:= \sigma(X_D,X_n,X_{n+1},...,X_N).$$
\end{lem}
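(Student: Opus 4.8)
The plan is to establish the stronger fact that the variance hypothesis forces a conditional Kolmogorov zero-one law: for every tail event $E \in \bigcap_n \mathcal{F}_{D,-n}$ the conditional probability $\mathbb{P}(E \mid X_D)$ is $\{0,1\}$-valued a.s. Once this is shown, (DCP) follows at once by taking $E' := \{\mathbb{P}(E \mid X_D) = 1\} \in \sigma(X_D)$, since $1_E = \mathbb{P}(E \mid X_D)$ a.s. gives $\mathbb{P}(E \,\Delta\, E') = 0$. The argument splits into a clean part for the main statement and an approximation part for the strengthening.

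First I would fix $D$, an event $E \in \bigcap_n \mathcal{F}_{D,-n}$, and an index $m \in \mathbb{N}$. The crucial observation is that $E$ is a tail event, hence lies in $\mathcal{F}_{D,-n}$ for \emph{every} $n$, so the hypothesis applies with the same $E$ simultaneously for all $n$: for $\mathbb{P}$-a.e.\ $x_D$,
$$ \var\big(\mathbb{P}(E \mid X_1,\dots,X_m,X_D) \mid X_D = x_D\big) \leq g_{m,D,x_D}(n) \quad \text{for all } n. $$
Sending $n \to \infty$ and using $g_{m,D,x_D}(n) \to 0$ forces this conditional variance to vanish a.s. A conditional variance equal to zero means the random variable $\mathbb{P}(E \mid X_1,\dots,X_m,X_D)$ agrees a.s.\ with its conditional mean given $X_D$, which by the tower property equals $\mathbb{P}(E \mid X_D)$; thus $\mathbb{P}(E \mid X_1,\dots,X_m,X_D) = \mathbb{P}(E \mid X_D)$ a.s.\ for every fixed $m$.

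Next I would send $m \to \infty$. Since $E \in \mathcal{F}_{D,-1} = \sigma(X_D,X_1,X_2,\dots)$, the event $E$ is measurable with respect to the limit of the increasing filtration $\mathcal{G}_m := \sigma(X_1,\dots,X_m,X_D)$, so by L\'evy's upward martingale convergence theorem $\mathbb{P}(E \mid X_1,\dots,X_m,X_D) \to 1_E$ a.s. Combining this with the previous step, whose left-hand side is the $m$-independent quantity $\mathbb{P}(E \mid X_D)$, yields $1_E = \mathbb{P}(E \mid X_D)$ a.s. Hence $E$ coincides up to a null set with $E' \in \sigma(X_D)$, proving (DCP). The point to stress is that there is no real obstacle here once one notices that the tail membership of $E$ permits taking the $n$-limit \emph{before} the $m$-limit, collapsing the conditional variance to zero.

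The hard part will be the strengthening, where the bound is assumed only for $A$ in the finitely-generated $\mathcal{F}_{D,-n,-N} = \sigma(X_D,X_n,\dots,X_N)$, so it cannot be applied to the tail event $E$ directly. Since $\mathcal{F}_{D,-n,-N} \uparrow \mathcal{F}_{D,-n}$ as $N \to \infty$ and $E \in \mathcal{F}_{D,-n}$, martingale convergence (or Proposition \ref{prop:algapprox}) supplies events $A_N \in \mathcal{F}_{D,-n,-N}$ with $\mathbb{P}(A_N \,\Delta\, E) \to 0$, to each of which the hypothesis gives $\var(\mathbb{P}(A_N \mid X_1,\dots,X_m,X_D) \mid X_D = x_D) \leq g_{m,D,x_D}(n)$. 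The delicate step is transferring this bound to $E$, i.e.\ converting the $\Delta$-approximation into convergence of \emph{conditional variances} while preserving the a.s.\ inequality. Because all quantities are indicators or probabilities bounded in $[0,1]$, the convergence $\mathbb{P}(A_N \,\Delta\, E) \to 0$ upgrades to $1_{A_N} \to 1_E$ in $L^2$, whence $\mathbb{P}(A_N \mid X_1,\dots,X_m,X_D) \to \mathbb{P}(E \mid X_1,\dots,X_m,X_D)$ in $L^2$ by the contraction property of conditional expectation, giving $L^1$ convergence of the conditional variances; passing to an a.s.-convergent subsequence (in $x_D$) then keeps the bound $\leq g_{m,D,x_D}(n)$. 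With the bound now available for $E$ itself, the first two paragraphs apply verbatim.
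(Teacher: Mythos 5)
Your proposal is correct and follows essentially the same route as the paper: the tail membership of $E$ lets you apply the variance bound for every $n$ at once, forcing the conditional variance to vanish, so that $\mathbb{P}(E\mid X_1,\dots,X_m,X_D)=\mathbb{P}(E\mid X_D)$ for each $m$, and the L\'evy zero-one law then gives $1_E=\mathbb{P}(E\mid X_D)$ a.s. For the strengthening, the paper likewise approximates $E$ by events in $\mathcal{F}_{D,-n,-N}$ via Proposition \ref{prop:algapprox} and passes the variance bound to the limit (it invokes dominated convergence where you use the $L^2$ contraction of conditional expectation plus a subsequence, a minor and equally valid variation).
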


\begin{proof}
Let $D \subseteq \mathbb{N}$ and$A \in \cap_n \mathcal{F}_{D,-n}$ be arbitrary. By assumption,
$$\var(\mathbb{P}(A|X_1,...,X_m,X_D)|X_D=x_D) \leq g_{m,D,x_D}(n) \textrm{ a.s. }X_D \ \ \forall n,$$
and thus
$$\var(\mathbb{P}(A|X_1,...,X_m,X_D)|X_D=x_D) = 0 \textrm{ a.s. }X_D.$$
Thus, we have that $\mathbb{P}(A|X_1,...,X_m,X_D)$ is almost surely just a function of $X_D$. That is, $$\mathbb{P}(A|X_1,...,X_m,X_D) = \mathbb{P}(A|X_D)$$ almost surely for all $m$. By the L\'e{}vy zero-one law, we may conclude 
$$1_A = \lim_{m \rightarrow \infty} \mathbb{P}(A|X_D,X_1,...,X_m) = \mathbb{P}(A|X_D)$$
almost surely, so there is some $A' \in \sigma(X_D)$ such that $1_A = 1_{A'}$ almost surely, or equivalently, $\mathbb{P}(A \Delta A') = 0$. Thus (DCP) holds.

Now, suppose instead that we only assume
$$\var(\mathbb{P}(E|X_1,...,X_m,X_D)|X_D=x_D) \leq g_{m,D,x_D}(n) \textrm{ a.s. }X_D$$
for those $E$ contained in $\mathcal{F}_{D,-n,-N} = \sigma(X_D,X_n,X_{n+1},...,X_N)$ for some $n, N$. The algebra given by $\mathcal{A} = \cup_N \sigma(X_D,X_n,X_{n+1},...,X_N)$ generates $\mathcal{F}_{D,-n}$, and so by Proposition \ref{prop:algapprox}, for all $n \in \mathbb{N}, \epsilon > 0$, there exists some $N \in \mathbb{N}$ and an event $A_{n,\epsilon} \in \mathcal{F}_{D,-n,-N}$ such that $\mathbb{P}(A \Delta A_{n,\epsilon}) < \epsilon$. Thus, in particular, we have $1_{A_{n,\epsilon}} \rightarrow 1_A$ almost surely. 

By the dominated convergence theorem, we may thus conclude
$$\var(\mathbb{P}(A_{n,\epsilon}|X_1,...,X_m,X_D)|X_D=x_D) \rightarrow \var(\mathbb{P}(A|X_1,...,X_m,X_D)|X_D=x_D),$$
and therefore that
$$\var(\mathbb{P}(A|X_1,...,X_m,X_D)|X_D=x_D) \leq g_{m,D,x_D}(n)$$
for all $n$ (since $A \in \cap_n \mathcal{F}_{D,-n}$). Then we may follow the above argument to obtain the existence of $A' \in \sigma(X_D)$ such that $\mathbb{P}(A \Delta A') = 0$, and conclude that (DCP) holds.
\end{proof}

\subsection{Analysis of Properties (IIP), (DCP), and (P5*)}
There are a few natural questions that immediately arise regarding the technical conditions required to obtain the equivalence of the Markov properties. In this section, we consider the following three:

\begin{itemize}
\item Is a more straightforward extension of the intersection property (as given by (P5) in the finite setting) sufficient to obtain the equivalence of the various Markov properties in the infinite setting?
\item Does the infinite intersection property (IIP) imply the decorrelation property (DCP) or vice versa?
\item Is the infinite intersection property (IIP) or the decorrelation property (DCP) in any sense a necessary condition for (P5*)?
\end{itemize}

We begin by addressing the first question. One may consider the following na{\"i}ve extension of (P5) to the infinite case: assume that (P5) holds for all finite subcollections of the infinite set of variables. This assumption is perhaps the most natural way of extending (P5), and is strictly weaker than the infinite intersection property (IIP) (which itself can be viewed as an extension of (P5)). However, this assumption is not sufficient to obtain the equivalence of the Markov properties, as shown in Example \ref{exa:5.1} in the Supplemental Section. In fact, we show in Example \ref{exa:5.1} that even adding the decorrelation property (DCP) to this na{\"i}ve extension of (P5) is not sufficient to obtain the equivalence of the Markov properties. This demonstrates the ineffectiveness of the na{\"i}ve extension of (P5). The infinite intersection property (IIP) augments this extension of (P5) only by allowing the conditioning set to be infinite, and in light of Theorem \ref{thm:d1d2thm} can be viewed as the correct extension of property (P5).

We now consider the question of whether (DCP) implies (IIP) or vice versa. First note that, as mentioned in Remark \ref{rek:d2} above, in the finite setting the decorrelation property (DCP) holds trivially, and the infinite intersection property (IIP) is equivalent to the usual intersection property (P5). Since (P5) does not hold in general even in the finite case, this shows that the decorrelation property (DCP) alone is not a sufficient condition for (P5*), and also that (DCP) does not imply (IIP). The previously mentioned Example \ref{exa:5.1} also provides a less trivial example where the decorrelation condition (DCP) is satisfied and where (P5*) is not. This shows that the decorrelation property (DCP) by itself does not imply the infinite intersection property (IIP) nor the general intersection property (P5*).

Next, we address whether (IIP) implies (DCP). In fact, (IIP) does not imply (DCP), and moreover (IIP) is not a sufficient condition for (P5*) by itself. We demonstrate this in Example \ref{exa:5.2} in the Supplemental Section, in which we provide a collection of random variables which satisfies (IIP), but which does not satisfy (DCP), and for which the equivalence of the Markov properties does not hold. This shows that (IIP) by itself is not enough to imply (P5*), and thus demonstrates the importance of also verifying (DCP) when one wishes to appeal to the equivalence of the Markov properties for an infinite collection of random variables.

The question that remains is whether the properties (IIP) and/or (DCP) are necessary for (P5*). It is easy to see that (IIP) is directly implied by (P5*), so that (IIP) is indeed a necessary condition for the equivalence of the Markov properties. However, the theoretical relationship between (P5*) and (DCP) is less clear. 

As demonstrated by Theorem \ref{thm:d1d2thm}, (DCP) is a convenient criterion which can be used with (IIP) to verify the equivalence of the Markov properties. In addition, for many applications of interest, (DCP) is satisfied due to the ``decorrelation'' of the random variables (see Remark \ref{rek:dp}). However, we demonstrate in Example \ref{exa:5.3} of the Supplemental Section that (DCP) is not a necessary condition for (P5*). Despite this, the following proposition gives a partial converse to Theorem \ref{thm:d1d2thm}. In particular, this proposition provides some settings under which some of the implications required by the definition of (DCP) are implied by (P5*), thus providing more justification that (DCP) is a reasonable condition to impose.

\begin{prop} \label{prop:d2conv}
Suppose that $(X_n)_{n=1}^\infty$ is a collection of random variables for which (P5*) holds, and that $D\subseteq \N$ and $H = \{h_1, h_2, ...\} \subseteq \mathbb{N}$ are such that in the graph on $\N$ induced by (P*), there are no paths between distinct elements of $H$ that do not pass through $D$. Then for any event $E \in \cap_n \sigma(X_D, X_{h_n}, X_{h_{n+1}},...)$, there exists $E' \in \sigma(X_D)$ such that $\P(E \Delta E') = 0$. 
\end{prop}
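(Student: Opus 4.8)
The plan is to convert the separation hypothesis into a family of conditional independence statements through the global Markov property, and then to run a L\'evy zero-one law argument of the same flavor as the one in the proof of Lemma~\ref{lem:d2new}. Throughout, write $H_{<n} = \{h_1,\dots,h_{n-1}\}$ and $H_{\ge n} = \{h_n,h_{n+1},\dots\}$, and let $\mathcal{G}$ denote the graph on $\N$ induced by (P*), over which (P*) holds by construction.

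First I would observe that Theorem~\ref{thm:CIREquiv} applies: (P1*)--(P4*) hold by Lemma~\ref{lem:p1p4}, and (P5*) holds by hypothesis, so since (P*) holds over $\mathcal{G}$ we conclude that the conditional independence relation satisfies (G*) over $\mathcal{G}$. The key combinatorial step is then to show that $D$ separates $H_{<n}$ from $H_{\ge n}$ in $\mathcal{G}$ for every $n$: any path joining some $h_i$ with $i<n$ to some $h_j$ with $j\ge n$ is a path between two distinct elements of $H$ (since $i \ne j$), and hence by the separation hypothesis must meet $D$. Applying (G*) with the $\sigma$-algebra formulation of conditional independence (Definition~\ref{def:condIndepSA}), which is precisely what makes the possibly infinite sets $D$ and $H_{\ge n}$ harmless, yields
$$X_{H_{<n}} \ci X_{H_{\ge n}} \mid X_D \qquad \text{for every } n.$$

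Next I would transfer each of these statements to the event $E$. Using the standard fact that conditional independence is preserved when the conditioning variables are adjoined to one side, $X_{H_{<n}} \ci X_{H_{\ge n}} \mid X_D$ implies that $\sigma(X_{H_{<n}})$ and $\sigma(X_D,X_{H_{\ge n}})$ are conditionally independent given $\sigma(X_D)$. Since $E$ lies in the intersection $\cap_n \sigma(X_D,X_{h_n},X_{h_{n+1}},\dots)$, in particular $E \in \sigma(X_D,X_{H_{\ge n}})$ for every $n$; a routine tower-property (or $\pi$--$\lambda$) argument then gives $\P(E \mid X_D, X_{H_{<n}}) = \P(E \mid X_D)$ almost surely, for all $n$. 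Finally I would let $n \to \infty$: the $\sigma$-algebras $\sigma(X_D,X_{H_{<n}})$ increase to $\sigma(X_D,X_H)$, and $E \in \sigma(X_D,X_H)$, so the L\'evy zero-one law gives $\P(E \mid X_D,X_{H_{<n}}) \to 1_E$ almost surely. Combining with the previous identity, $1_E = \P(E \mid X_D)$ almost surely, so $E' = \{\P(E \mid X_D) = 1\} \in \sigma(X_D)$ satisfies $\P(E \Delta E') = 0$, as desired.

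The step I expect to require the most care is not the martingale limit (which mirrors Lemma~\ref{lem:d2new}) but the bookkeeping that lets (G*) be applied legitimately: verifying that $D$ genuinely separates the two sets $H_{<n}$ and $H_{\ge n}$ (not merely separates pairs $h_i,h_j$), handling the degenerate cases (e.g.\ $H \cap D \ne \emptyset$, $D=\emptyset$, or $H_{<n}=\emptyset$), and confirming that the passage from conditional independence of the $\sigma$-algebras to the pointwise identity $\P(E \mid X_D,X_{H_{<n}}) = \P(E \mid X_D)$ is valid in the general $\sigma$-algebra setting of Definition~\ref{def:condIndepSA}.
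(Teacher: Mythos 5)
Your proposal is correct and follows essentially the same route as the paper: convert the separation hypothesis into conditional independence of $X_{H_{<n}}$ and $X_{H_{\ge n}}$ given $X_D$ (the paper states the pairwise independences $X_{h_i}\ci X_{h_j}\mid X_D$ first and then asserts the block version, whereas you apply (G*) directly to the two blocks, which is if anything slightly cleaner), deduce $\P(E\mid X_D,X_{h_1},\dots,X_{h_n})=\P(E\mid X_D)$ a.s., and conclude via the L\'evy zero-one law with $E'=\{\P(E\mid X_D)=1\}$. The extra care you flag about $E$ living in $\sigma(X_D,X_{H_{\ge n}})$ rather than $\sigma(X_{H_{\ge n}})$ is a point the paper glosses over, and your handling of it is sound.
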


\begin{proof}
Since (P5*) holds, (P*) is equivalent to (G*), and so the condition on the elements of $H$ relative to the (P*)-induced graph implies that for any $h_i, h_j \in H$ we have $X_{h_i} \ci X_{h_j} \ | \ X_D$.

Now, suppose that $E \in \cap_n \sigma(X_D, X_{h_n}, X_{h_{n+1}},...)$. Then $E \in \sigma(X_D, X_{h_1}, X_{h_2},...)$, so it is the case that $1_E = \lim_{m \to \infty} \P(E \ | \ X_D, X_{h_1},...,X_{h_m})$. But, by the conditional independences $X_{h_i} \ci X_{h_j} \ | \ X_D$, it is the case that $(X_{h_1},...,X_{h_n}) \ci (X_{h_{n+1}}, X_{h_{n+2}},...) \ | \ X_D$, and so since $E \in \sigma(X_{h_{n+1}}, ...)$, we have for all $n$ the equality $\P(E \ | \ X_D, X_{h_1}, X_{h_2},...,X_{h_n}) = \P(E \ | \ X_D)$ with probability one. Thus, with probability one, we have $1_E = \P(E \ | \ X_D)$. Thus, if $E' = \{\P(E \ | \ X_D) = 1\}$, then $E' \in \sigma(X_D)$ and $\P(E \Delta E') = 0$, concluding the proof.
\end{proof}

\begin{rek}
The infinite star graph is an example of the kind of graph to which Proposition \ref{prop:d2conv} could be applied. For this example, we could take $D$ to be the hub node, and $H$ to contain all of the remaining nodes. 
\end{rek}

\section{Graphical Gaussian Processes} \label{sec:GGP}

Two of the most important classes of graphical models that have been studied in the literature are Gaussian graphical models and discrete log-linear models. In this section and the one that follows, we give a comprehensive treatment of their countably infinite analogues and important special cases thereof. %In particular, we verify conditions (IIP) and (DCP), so that Theorem \ref{thm:d1d2thm} can be invoked in order to conclude the equivalence of the pairwise, local, and global Markov properties.

\subsection{Infinite-Dimensional Gaussian Graphical Models and Properties}

We begin by introducing some definitions and notation.

\begin{defi}\label{def:gprocs}\ 
\bi
\item A \emph{Gaussian process} is a collection of random variables $(X_i)_{i \in I}$ such that each $X_i$ is a normal random variable, and every finite collection of the $X_i$ has a multivariate normal distribution. We will assume in this paper that in a Gaussian process, all of the random variables have mean zero. For any subset $A \subseteq I$, we use $X_A$ to denote the random variable $(X_a)_{a \in A}$. 
\item For $A, B \subseteq I$, we use $\Sigma_{AB}$ to represent the cross-covariance matrix of $X_A$ and $X_B$. We also write $\Sigma_A$ to represent $\Sigma_{AA}$.
\item If $A, B \subseteq I$ are finite and disjoint, we use $\Sigma_{A|B}$ to represent the covariance matrix of $X_A$ given $X_B$. It is well known that when $\Sigma_B$ is invertible,
$$\Sigma_{A|B} = \Sigma_A - \Sigma_{AB}\Sigma_B^{-1}\Sigma_{BA}.$$
\item For a finite-dimensional matrix $M$, we use the notation $\lambda_i(M)$ to represent the $i^{\text{th}}$ largest eigenvalue of $M$.
\ei
\end{defi}

We now introduce a theorem which is a critical ingredient in the verification of (IIP) and (DCP) for Gaussian processes.

\begin{thm} \label{thm:density}
Let $(X_n)_{n \in \mathbb{N}}$ be a Gaussian process which satisfies bounds (1) and (2) given in the statement of Theorem \ref{thm:gaussianMR}. Then for any finite $A \subseteq \mathbb{N}$ of size $r$ and any (possibly infinite) $B \subseteq \mathbb{N}$ disjoint from $A$, there is a $\sigma(X_B)$-measurable function $\mu_{A|B}$ and an $r \times r$ symmetric, positive-definite matrix $\Sigma_{A|B}$ such that
$$X_A | X_B \sim \mathcal{N}(\mu_{A|B}(X_B), \Sigma_{A|B}).$$

\end{thm}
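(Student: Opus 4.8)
The plan is to reduce to the classical finite-dimensional Gaussian conditioning formulas and then pass to the limit along an exhaustion of $B$ by finite sets. If $B$ is finite the statement is the standard fact that the conditional law of a jointly Gaussian vector is Gaussian, with $\mu_{A|B}(X_B) = \Sigma_{AB}\Sigma_B^{-1}X_B$ and $\Sigma_{A|B} = \Sigma_A - \Sigma_{AB}\Sigma_B^{-1}\Sigma_{BA}$ (well defined since $\Sigma_B$ is invertible by condition (1)); so I would assume $B$ infinite, enumerate $B = \{b_1, b_2, \dots\}$, and set $B_n = \{b_1,\dots,b_n\}$ and $\mathcal{F}_n = \sigma(X_{B_n})$. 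For each $n$ the finite-dimensional theory gives $X_A \mid X_{B_n} \sim \mathcal{N}(\mu_{A|B_n}, \Sigma_{A|B_n})$, and the goal is to show that both $\mu_{A|B_n}$ and $\Sigma_{A|B_n}$ converge to limits with the required properties and that the limiting law is the conditional law of $X_A$ given $\sigma(X_B) = \sigma(\bigcup_n \mathcal{F}_n)$.

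First I would control the conditional covariance matrices. Applying the law of total variance conditionally and using that for Gaussians each $\Sigma_{A|B_n}$ is a deterministic matrix, one obtains $\Sigma_{A|B_n} = \Sigma_{A|B_{n+1}} + \var(\E[X_A \mid X_{B_{n+1}}] \mid X_{B_n}) \succeq \Sigma_{A|B_{n+1}}$, so $(\Sigma_{A|B_n})_n$ is nonincreasing in the Loewner order. To see that the limit is positive definite, and not merely positive semidefinite, I would use condition (1): since $\Sigma_{A|B_n}^{-1}$ is the principal $A$-block of $(\Sigma_{A \cup B_n})^{-1}$ by the Schur complement identity, Cauchy interlacing bounds its eigenvalues by those of $(\Sigma_{A\cup B_n})^{-1}$, which lie in $(1/C, 1/c)$; hence every $\Sigma_{A|B_n}$ has spectrum in $(c,C)$. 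A nonincreasing sequence of symmetric matrices uniformly bounded below by $cI$ converges entrywise to a symmetric matrix $\Sigma_{A|B}$ with spectrum in $[c,C]$, which is therefore positive definite.

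Next I would handle the conditional means. For each $a \in A$ the sequence $\E[X_a \mid \mathcal{F}_n]$ is a martingale relative to the filtration $(\mathcal{F}_n)$, and it is bounded in $L^2$ because $\E[(\E[X_a\mid \mathcal{F}_n])^2] \le \E[X_a^2] = \Sigma_{aa} < C$. By the martingale convergence theorem it converges almost surely and in $L^2$ to $\E[X_a \mid \mathcal{F}_\infty]$, where $\mathcal{F}_\infty = \sigma(X_B)$; collecting the components defines the $\sigma(X_B)$-measurable limit $\mu_{A|B}(X_B) := \E[X_A \mid \sigma(X_B)]$, and $\mu_{A|B_n} \to \mu_{A|B}(X_B)$ almost surely.

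Finally I would identify the limiting law via conditional characteristic functions. For each fixed $t \in \mathbb{R}^r$ the finite-dimensional conditioning gives $\E[e^{i t^\top X_A} \mid \mathcal{F}_n] = \exp\!\big(i t^\top \mu_{A|B_n} - \tfrac12 t^\top \Sigma_{A|B_n} t\big)$ almost surely. The left side converges almost surely to $\E[e^{i t^\top X_A}\mid \sigma(X_B)]$ by L\'evy's upward theorem (the integrand is bounded), while the right side converges almost surely to $\exp\!\big(i t^\top \mu_{A|B}(X_B) - \tfrac12 t^\top \Sigma_{A|B} t\big)$ by the previous two steps; hence these limits agree almost surely. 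Intersecting the exceptional null sets over a countable dense set of $t$ and using continuity in $t$ shows that, almost surely, the conditional characteristic function of $X_A$ given $\sigma(X_B)$ equals that of $\mathcal{N}(\mu_{A|B}(X_B), \Sigma_{A|B})$; since $X_A$ takes values in $\mathbb{R}^r$ a regular conditional distribution exists, and by uniqueness of characteristic functions it must be this Gaussian law, which is exactly the assertion. The main obstacle is the passage from finite to infinite conditioning: one must simultaneously establish that the conditional covariances do not degenerate in the limit -- this is precisely where the uniform eigenvalue lower bound in condition (1) is indispensable -- and that the almost-sure, pointwise-in-$t$ convergence of conditional characteristic functions can be upgraded to an almost-sure identification of the entire regular conditional law as Gaussian.
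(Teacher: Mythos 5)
Your proposal is correct, and it reaches the conclusion by a genuinely different route from the paper. For the conditional covariances, the paper only extracts a convergent \emph{subsequence} of $\Sigma_{A|B_n}$ via entrywise boundedness and sequential compactness (explicitly declining to check that the limit is independent of the subsequence), whereas your law-of-total-variance argument shows the full sequence is nonincreasing in the Loewner order and hence converges outright; both arguments use condition (1) together with Cauchy interlacing for Schur complements to keep the spectrum in $[c,C]$. The sharper divergence is in the conditional means: the paper constructs the limit of $\mu_{A|B_n}$ by hand, writing $\Sigma_{AB_n}\Sigma_{B_n}^{-1}K_n$ and proving entrywise bounds of the form $O(g_2(a_i,b_j))$ from the covariance-decay hypothesis (condition (2)), then combining these with an almost-sure growth bound on the standardized coordinates (Proposition \ref{prop:normsumbnd}) to get a.s.\ convergent sums; you instead invoke $L^2$-bounded martingale convergence of $\E[X_A\mid\mathcal{F}_n]$, which needs no decay hypothesis at all. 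Finally, the paper identifies the limiting law through densities, dominated convergence, and a $\pi$-$\lambda$ argument, while you use conditional characteristic functions and L\'evy's upward theorem; both identifications are sound. The net effect is that your proof establishes the theorem under condition (1) alone, a strictly more general statement. What the paper's heavier route buys is the explicit quantitative control on the entries of $\Sigma_{AB}\Sigma_{BB}^{-1}K$, which is not needed for Theorem \ref{thm:density} itself but is reused directly in the proof of Lemma \ref{lem:d2} to verify (DCP); your argument, being soft, does not produce that by-product.
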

\begin{proof}
This theorem is central to ensuing arguments, but due to its technical nature and length, the proof is provided in Appendix \ref{app:MR}.
\end{proof}

\begin{lem} \label{lem:d1}
Under the assumptions of Theorem \ref{thm:gaussianMR}, the Gaussian process $(X_n)_{n \in \mathbb{N}}$ satisfies (IIP).
\end{lem}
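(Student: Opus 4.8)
The plan is to use Theorem \ref{thm:density} to reduce the infinite-dimensional intersection statement (IIP) to the classical finite-dimensional intersection property (P5), which holds for Gaussian vectors possessing a strictly positive density. Fix a (possibly infinite) $D \subseteq \mathbb{N}$ and finite, pairwise disjoint $A, B, C \subseteq \mathbb{N} \setminus D$, and assume the two hypotheses $X_A \ci X_B \mid X_C, X_D$ and $X_A \ci X_C \mid X_B, X_D$. Applying Theorem \ref{thm:density} to the finite set $A \cup B \cup C$ and the set $D$, the regular conditional distribution of $(X_A, X_B, X_C)$ given $\sigma(X_D)$ is, for $\mathbb{P}$-a.e.\ value $X_D = x_D$, the nondegenerate Gaussian law $\mathcal{N}(\mu_{(A \cup B \cup C)|D}(x_D), \Sigma_{(A \cup B \cup C)|D})$ whose covariance is positive-definite. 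In particular this conditional law possesses a density $f(\cdot \mid x_D)$ on $\mathbb{R}^{|A|+|B|+|C|}$ that is \emph{strictly positive everywhere}.

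The first real step is measure-theoretic: to translate the two conditional-independence hypotheses, whose conditioning $\sigma$-algebras $\sigma(X_C, X_D)$ and $\sigma(X_B, X_D)$ are infinite-dimensional, into statements about the finite-dimensional conditional density $f(\cdot \mid x_D)$. Writing $P^{x_D}$ for the conditional law above and using that $\sigma(X_C, X_D) = \sigma(X_C) \vee \sigma(X_D)$ with $X_C$ a coordinate of the finite vector whose law under $P^{x_D}$ we control, an iterated-conditioning (tower) argument shows that conditioning further on $X_C$ under $P^{x_D}$ agrees with conditioning on $(X_C, X_D)$. Consequently, for $\mathbb{P}$-a.e.\ $x_D$ the hypothesis $X_A \ci X_B \mid X_C, X_D$ becomes the factorization $f(x_A, x_B \mid x_C, x_D) = f(x_A \mid x_C, x_D) f(x_B \mid x_C, x_D)$ of the conditional density, and similarly $X_A \ci X_C \mid X_B, X_D$ becomes its analogue; the two exceptional $\mathbb{P}$-null sets of values $x_D$ are combined into a single null set.

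With both factorizations in hand and $f(\cdot \mid x_D)$ strictly positive, the classical argument underlying (P5) applies pointwise in $x_D$: the two factorizations force $f(x_A \mid x_C, x_D) = f(x_A \mid x_B, x_D)$ for all $x_A, x_B, x_C$, and since the left side is free of $x_B$ while the right side is free of $x_C$, both must equal $f(x_A \mid x_D)$; substituting back yields $f(x_A, x_B, x_C \mid x_D) = f(x_A \mid x_D) f(x_B, x_C \mid x_D)$ for $\mathbb{P}$-a.e.\ $x_D$. This is precisely the factorization encoding $X_A \ci (X_B, X_C) \mid X_D$, which is the conclusion of (IIP).

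I expect the main obstacle to be the measure-theoretic bookkeeping of the second paragraph: justifying that the conditional-independence hypotheses relative to the infinite-dimensional $\sigma$-algebras $\sigma(X_C, X_D)$ and $\sigma(X_B, X_D)$ descend, via the regular conditional distribution supplied by Theorem \ref{thm:density}, to the claimed factorizations of the finite-dimensional density $f(\cdot \mid x_D)$ holding for $\mathbb{P}$-a.e.\ $x_D$. Once this reduction is made rigorous, the strict positivity guaranteed by Theorem \ref{thm:density} renders the remaining steps a routine application of the standard finite-dimensional intersection property.
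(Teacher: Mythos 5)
Your proposal is correct and follows essentially the same route as the paper's proof: both invoke Theorem \ref{thm:density} to obtain a nondegenerate (hence strictly positive) conditional Gaussian density given $X_D$, translate the two conditional-independence hypotheses into a.e.\ factorizations of that density, and then run the classical positive-density intersection argument to deduce $X_A \ci (X_B,X_C) \mid X_D$. The measure-theoretic step you flag as the main obstacle is exactly where the paper spends its effort, handling it by an approximation-and-contradiction argument via Proposition \ref{prop:algapprox} rather than a direct tower-property computation, but the substance is the same.
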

\begin{proof}
The proof follows by exploiting the existence of a density using standard techniques. Details are provided in the Supplemental section.
\end{proof}

\begin{rek}
The proof of Lemma \ref{lem:d1} amounts to verifying an only slightly extended version of the finite-dimensional intersection property (P5), since the only difference between (P5) and (IIP) lies in potentially conditioning on an infinite-dimensional random variable. For more on the conditions under which the intersection property holds in the finite-dimensional case, see \cite{Peters}.
\end{rek}

\begin{lem} \label{lem:d2}
Under the assumptions of Theorem \ref{thm:gaussianMR}, the Gaussian process $(X_n)_{n \in \mathbb{N}}$ satisfies the decorrelation property (DCP).
\end{lem}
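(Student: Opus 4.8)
The plan is to verify (DCP) by appealing to the sufficient condition furnished by Lemma \ref{lem:d2new}. That is, for an arbitrary $D \subseteq \mathbb{N}$ and $m \in \mathbb{N}$, I would exhibit a function $g_{m,D,x_D}(n)$ tending to $0$ in $n$ such that for every $A \in \mathcal{F}_{D,-n} = \sigma(X_D, X_n, X_{n+1}, \ldots)$ we have $\var(\mathbb{P}(A \mid X_1,\ldots,X_m,X_D) \mid X_D = x_D) \leq g_{m,D,x_D}(n)$. By the ``more strongly'' clause of Lemma \ref{lem:d2new}, it suffices to establish this bound only for events $A \in \mathcal{F}_{D,-n,-N} = \sigma(X_D, X_n,\ldots,X_N)$ for each finite $N > n$, which lets me work entirely with finite-dimensional Gaussian conditionals and thereby invoke Theorem \ref{thm:density} to guarantee genuine (positive-definite) conditional densities at each stage.

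The core quantitative step is to control the discrepancy between $\mathbb{P}(A \mid X_1,\ldots,X_m,X_D)$ and $\mathbb{P}(A \mid X_D)$ in terms of the covariances between the ``near'' block $\{X_1,\ldots,X_m\}$ and the ``far'' block $\{X_n,\ldots,X_N\}$ (after conditioning on $X_D$). The natural mechanism is: conditionally on $X_D = x_D$, the variables remain jointly Gaussian with conditional covariance matrices controlled by condition (1) (the uniform eigenvalue bounds $0 < c < \lambda_i(\Sigma_A) < C$ survive conditioning, since Schur complements of uniformly well-conditioned blocks stay uniformly well-conditioned), while condition (2) forces the conditional cross-covariance between the near and far blocks to decay. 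Specifically, for fixed $m$, the bound $\sum_k g_n(i,k) k^\epsilon < \infty$ guarantees that $\sum_{i \leq m} \sum_{k \geq n} |\cov(X_i, X_k \mid X_D)| \to 0$ as $n \to \infty$. I would then use the fact that for jointly Gaussian vectors, conditional independence is equivalent to vanishing conditional cross-covariance, and that a \emph{small} cross-covariance yields a \emph{small} deviation from independence. Concretely, one writes the conditional distribution of $X_{\{n,\ldots,N\}}$ given $(X_1,\ldots,X_m,X_D)$ and compares it to its distribution given $X_D$ alone; the mean shift is $\Sigma_{\mathrm{far},\mathrm{near} \mid D}\,\Sigma_{\mathrm{near}\mid D}^{-1}(X_{\mathrm{near}} - \mu)$ and the covariance difference is a Schur-complement correction, both of which are $O$ of the cross-covariance and hence vanish with $n$. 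Since $A$ is measurable with respect to the far block (given $X_D$), the total-variation distance between these two Gaussian laws bounds $|\mathbb{P}(A \mid X_1,\ldots,X_m,X_D) - \mathbb{P}(A \mid X_D)|$ uniformly over such $A$, and squaring and taking conditional expectation over the near block yields the required variance bound $g_{m,D,x_D}(n)$.

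The main obstacle I anticipate is making the total-variation (or $L^2$) comparison between the two Gaussian conditional laws \emph{uniform over all events $A$} in the far $\sigma$-algebra while keeping the bound a clean, deterministic function $g_{m,D,x_D}(n)$ that is independent of $N$; this requires a quantitative Gaussian perturbation estimate—e.g.\ via a Pinsker-type bound on the KL divergence between $\mathcal{N}(\mu_1,\Sigma_1)$ and $\mathcal{N}(\mu_2,\Sigma_2)$—controlled entirely by the eigenvalue bounds from condition (1) and the decaying cross-covariance sums from condition (2). A secondary technical point is verifying that the conditional covariances given an \emph{infinite} set $X_D$ are well-defined and inherit the eigenvalue and decay bounds; here Theorem \ref{thm:density} does the heavy lifting by supplying a bona fide positive-definite $\Sigma_{A \mid B}$ even for infinite $B$, so the finite-dimensional Gaussian computations transfer cleanly. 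Once the uniform bound is in hand, the hypotheses of Lemma \ref{lem:d2new} are met and (DCP) follows immediately.
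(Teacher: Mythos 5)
Your plan is correct in outline and shares the paper's overall architecture: both reduce (DCP) to the variance criterion of Lemma \ref{lem:d2new} (using its ``more strongly'' clause to restrict to events in $\mathcal{F}_{D,-n,-N}$ for finite $N$), and both lean on Theorem \ref{thm:density} to obtain a genuine conditional Gaussian law given the possibly infinite set $X_D$, with the eigenvalue bounds surviving conditioning via Cauchy interlacing of Schur complements. Where you genuinely diverge is in the central quantitative estimate. The paper does not compare the two conditional laws of the far block in total variation; instead it bounds $\var(\mathbb{P}(E\mid X_1,\ldots,X_m,X_D)\mid X_D=x_D)$ above by $\mathbb{E}\left[\left(\mathbb{P}(E\mid (X_1,\ldots,X_m)=(x_1,\ldots,x_m),X_D)-\mathbb{P}(E\mid (X_1,\ldots,X_m)=0,X_D)\right)^2\mid X_D=x_D\right]$, writes the inner difference as the integral over $y\in[0,1]$ of the $t$-derivative of $\mathbb{P}(E\mid (X_1,\ldots,X_m)=t(x_1,\ldots,x_m),X_D=x_D)$ at $t=y$, and differentiates the Gaussian density under the integral sign; the resulting derivative is bounded by iterated sums of products of the $g_r$ (this is where the hypothesis on $g_r$ up to $r=4$ is consumed), whose tails vanish as $n\to\infty$. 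Your Pinsker/KL route is a legitimate alternative and is cleaner in one respect: total variation is by definition uniform over events $A$ in the far $\sigma$-algebra, so you never carry the indicator $1_E$ through the computation as the paper does. The price, which you correctly identify, is making the KL bound independent of $N$: the trace and log-determinant corrections must be controlled by observing that the covariance perturbation $\Sigma_{\mathrm{far}\mid D}-\Sigma_{\mathrm{far}\mid \mathrm{near},D}$ is positive semidefinite of rank at most $m$ with tail-vanishing trace, and the mean-shift quadratic form must then be integrated against the conditional law of the near block to produce a finite $g_{m,D,x_D}(n)$. Both routes ultimately rest on the same two inputs, namely the uniform conditional eigenvalue bounds and the $O(g_2)$ decay of the conditional cross-covariances established inside the proof of Theorem \ref{thm:density}, so your proposal is a viable, genuinely different execution of the same strategy.
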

\begin{proof}
We will appeal to Lemma \ref{lem:d2new}. Let $D \subseteq \mathbb{N}$, $X_D=x_D$, and $m \in \mathbb{N}$ be arbitrary. We need to show that there is a function $g_{m,D,x_D}(n)$ with $\lim_{n \rightarrow \infty}g_{m,D,x_D}(n) = 0$ which satisfies the property that for any $E \in \mathcal{F}_{D,-n,-N} := \sigma(X_D,X_n,X_{n+1},...,X_N)$, we have
\be \var(\mathbb{P}(E|X_1,...,X_m,X_D)|X_D=x_D) < g_{m,D,x_D}(n) \textrm{ a.s. }X_D. \nonumber \ee

By Theorem \ref{thm:density}, after conditioning on $X_D=x_D$ we know that $(X_1,...,X_m,X_n,...,X_N)$ has some conditional multivariate normal distribution. Let $E \in \mathcal{F}_{D,-n,-N}$ (with $n > m$). Then we have that

\bea & & \var(\mathbb{P}(E|X_1,...,X_m,X_D)|X_D=x_D) \nonumber \\
& =    & \mathbb{E}\left[\left(\mathbb{P}(E|X_1,...,X_m,X_D)\right.\right.\nonumber \\
&      & \left.\left.-\ \mathbb{E}[\mathbb{P}(E|X_1,...,X_m,X_D)|X_D=x_D]\right)^2|X_D=x_D\right] \nonumber \\
& \leq & \mathbb{E}\left[\left(\mathbb{P}(E|X_1=x_1,...,X_m=x_m,X_D)\right.\right. \nonumber \\
&      & \left.\left.-\ \mathbb{P}(E|X_1=0,...,X_m=0,X_D)\right)^2|X_D=x_D\right] \nonumber \\
& = & \mathbb{E}\left[\left(\int_0^1 \frac{d}{dt}|_{t=y} \mathbb{P}(E|(X_1,...,X_m)=t(x_1,...,x_m),X_D=x_D)dy\right)^2|X_D=x_D\right]. \nonumber \\
& &  \label{ln:varbnd} \eea
Consider the quantity
$$\frac{d}{dt}|_{t=y} \mathbb{P}(E|(X_1,...,X_m)=t(x_1,...,x_m),X_D=x_D).$$
If we let $f_{x_D}$ denote the corresponding multivariate normal density, we have
\bea 
& & \frac{d}{dt}|_{t=y} \mathbb{P}(E|(X_1,...,X_m)=t(x_1,...,x_m),X_D=x_D) \nonumber \\
& = & \frac{d}{dt}|_{t=y} \int_E f_{x_D}(x_n,...,x_N,tx_1,...,tx_m)dx_n...dx_N \nonumber \\
& = & \int_E \frac{d}{dt}|_{t=y} f_{x_D}(x_n,...,x_N,tx_1,...,tx_m)dx_n...dx_N \nonumber \\
& = & \int_E f_{x_D}(x_n,...,x_N,yx_1,...,yx_m)\frac{d}{dt}|_{t=y}\left(-\frac{1}{2}\left (v_{x_D}+t\left (\sum_{j=1}^m c_{n,j} x_j,...,\sum_{j=1}^m c_{N,j} x_j\right )\right )^T\right. \nonumber \\
& & \left.\Sigma_{\{n,...,N\}|\{1,...,m\}\cup D}^{-1}\left (v_{x_D}+t\left (\sum_{j=1}^m c_{n,j} x_j,...,\sum_{j=1}^m c_{N,j} x_j\right )\right )\right)  dx_n...dx_N, \label{ln:lastline} \eea
where $v_{x_D}$ is a $N-n+1$ dimensional vector depending on $x_D, n$, and $N$, which represents the total contribution of $X_D$ to the mean of $(X_n,...,X_N)$ given $X_D=x_D$, and $c_{i,j}$ is the factor by which $X_j$ contributes to the mean of $X_i$. More explicitly, we have
$$v_{x_D} = (\Sigma_{rD}\Sigma_{DD}^{-1}x_D)_{r=n}^N,$$
with $\Sigma_{rD}\Sigma_{DD}^{-1}$ defined as a whole by the same definition as in the proof of Theorem \ref{thm:density}, i.e.
\be (v_{x_D})_r = \sum_{d_1 \in D} \sigma_{rd_1} \sum_{d_2 \in B} \sigma^{d_1d_2}x_{d_2}, \label{ln:dubsum} \ee
and
\be c_{i,j} = \sum_k \sigma_{ik} \sigma^{kj} = O\left (\sum_k g_0(i,k) g_1(k,j)\right ) = O(g_2(i,j)). \label{ln:cij}  \ee
Note that the double sum from line \eqref{ln:dubsum} is convergent since $\sigma_{rd_1} = O(g_0(r,d_1))$, and 
\be |\sigma^{d_1d_2}| \leq |\sigma_{d_1d_2}| + \sum_k |\sigma_{d_1k}\sigma_{kd_2}| = O(g_1(d_1,d_2)) \nonumber \ee
and so
\bea (v_{x_D})_r & = & O\left (\sum_{d_1 \in D} \sum_{d_2 \in D} g_0(r,d_1)g_1(d_1,d_2) x_{d_2}\right ) \nonumber \\
& = & O\left (\sum_{d_2 \in B} g_2(r,d_2) x_{d_2}\right ) \label{ln:vxb} \\
& = & O_{x_D,r}(1)\textrm{ a.s.\ }, X_D \label{ln:normbndd} \eea
where here and elsewhere the subscript on the $O$ means that the implicit constant may depend on the subscripted variables, and we have obtained line \eqref{ln:normbndd} by noting that $x_{d_2} = O(d_2^{\epsilon})$ a.s.\ $X_D$ from Proposition \ref{prop:normsumbnd}.

The presence of the density $f_{x_D}$ in the expression from line \eqref{ln:lastline} gives
\bea & = & \mathbb{E}\Bigg[1_E \frac{d}{dt}\bigg|_{t=y}\left(-\frac{1}{2}\left(v_{x_D}+t\left(\sum_{j=1}^m c_{n,j} x_j,...,\sum_{j=1}^m c_{N,j} x_j\right)\right)^T\right. \nonumber \\
& & \left.\Sigma_{\{n,...,N\}|\{1,...,m\}\cup D}^{-1}\left(v_{x_D}+t\left(\sum_{j=1}^m c_{n,j} x_j,...,\sum_{j=1}^m c_{N,j} x_j\right)\right)\right)\nonumber \\
& & \Bigg | (X_1,...,X_m) = y(x_1,...,x_m),X_D=x_D \Bigg]. \label{ln:absvalquant} \eea

Letting $(\Sigma_{\{n,...,N\}|\{1,...,m\}\cup D}^{-1})_{ij} = \sigma^{ij}$, the quantity in the derivative above is equal to
\bea & & -\frac{1}{2} \sum_{k=n}^N \sum_{\ell=n}^N \sigma^{k\ell}\left(v_{x_D,k}+t\sum_{j=1}^m c_{k,j} x_j\right )\left (v_{x_D,\ell}+t\sum_{j=1}^m c_{\ell,j} x_j\right ) \nonumber \\
& = & -\frac{1}{2} \sum_{k=n}^N \sum_{\ell=n}^N \sigma^{k\ell}\left(t\left (v_{x_D,k}\sum_{j=1}^m c_{\ell,j} x_j+ v_{x_D,\ell}\sum_{j=1}^m c_{k,j} x_j\right )\right. \nonumber \\
&   & +\ \left. t^2\left (\sum_{j=1}^m c_{k,j} x_j\right )\left (\sum_{j=1}^m c_{\ell,j} x_j\right ) + \widetilde{C}\right) \nonumber \eea
where $\widetilde{C}$ represents a constant term not depending on $t$, and therefore we have that the derivative with respect to $t$ at $t=y$ is given by
\be - \frac{1}{2} \sum_{k=n}^N \sum_{\ell=n}^N \sigma^{k\ell}\left(v_{x_D,k}\sum_{j=1}^m c_{\ell,j} x_j + v_{x_D,\ell}\sum_{j=1}^m c_{k,j} x_j + 2y\left (\sum_{j=1}^m c_{k,j} x_j\right )\left (\sum_{j=1}^m c_{\ell,j} x_j\right )\right). \nonumber \ee

Thus, incorporating the results of lines \eqref{ln:cij} and \eqref{ln:vxb}, the derivative from line \eqref{ln:absvalquant} is bounded in absolute value by 
\bea & & O \left( \sum_{k=n}^N \sum_{\ell=n}^N g_1(k,\ell) \left(\sum_{d_2 \in D} g_2(k,d_2) |x_{d_2}|\left (\sum_{j=1}^m |g_2(\ell,j) x_j|\right )
\right. \right. \nonumber \\
& & \left. \left. +\, \left (\sum_{j=1}^m |g_2(k,j) x_j|\right )\left (\sum_{h=1}^m |g_2(\ell,h) x_h|\right ) \right ) \right ) \nonumber \\
& = & O\left ( \sum_{k =n}^\infty \sum_{\ell = n}^\infty \sum_{d \in D}  \sum_{j=1}^m g_2(d,k) g_1(k,\ell) g_2(\ell,j) |x_d| (\max_{j \leq n} |x_j|) \right . \nonumber \\
&    & \left. +\,  \sum_{k =n}^\infty \sum_{\ell = n}^\infty \sum_{j =1}^m  \sum_{h=1}^m g_2(j,k) g_1(k,\ell) g_2(\ell,h) (\max_{j \leq n} |x_j|)^2\right ) \label{ln:manysums}
\eea

Now, consider the quantity from line \eqref{ln:manysums} with $n=1$ and $N = \infty$. Applying the recursive definition of $g_n$, it satisfies
\bea 
& = & O\left ( \sum_{d \in D} \sum_{j = 1}^m g_4(d,j)|x_d| \max_{j \leq n} |x_j| + \sum_{j=1}^m \sum_{h=1}^m g_4(j,h) \max_{j \leq n} |x_j|^2 \right), \label{ln:termsumline}
\eea
where both double sums are convergent by our assumptions. Thus, as $n \to \infty$ in line \eqref{ln:manysums}, the sums decrease to zero (since as $n \to \infty$ all terms are removed from the sum), and so we have demonstrated that for any fixed $m$, the derivative in question is bounded by a quantity of the form
$$ o_n(1) (\max_{j \leq m} |x_j| + (\max_{j \leq m} |x_j|)^2)$$
(where $o_n$ means as $n \to \infty$). Combining this with line \eqref{ln:varbnd} gives
\bea 
& & \var(\mathbb{P}(E|X_1,...,X_m,X_D)|X_D=x_D) \nonumber \\
& \leq & \mathbb{E}\left [\left (\int_0^1 \frac{d}{dt}|_{t=y} \mathbb{P}(E|(X_1,...,X_m)=t(x_1,...,x_m),X_D=x_D)dy\right )^2\Bigg| X_D=x_D\right ] \nonumber \\
& = & o(1){E}(\max_{j\leq m} |x_j|)+(\max_{j\leq m} |x_j|)^2|X_D=x_D], \nonumber \eea
demonstrating the existence of the desired function $g_{m,D,x_D}(n)$.
\end{proof}

We are now in a position to immediately prove Theorem \ref{thm:gaussianMR}. 

\begin{proof}[Proof of Theorem \ref{thm:gaussianMR}] Combining Theorem \ref{thm:d1d2thm} with Lemmas \ref{lem:d1} and \ref{lem:d2} allows us to deduce (P5*), concluding the proof of Theorem \ref{thm:gaussianMR}.
\end{proof}

\begin{rek} The proofs of Theorem \ref{thm:density} and Lemma \ref{lem:d1} only require assumption (2) on $g_n$ for $n \leq 2$, but the proof of Lemma \ref{lem:d2} however requires this assumption up to $n = 4$. 
\end{rek}

\subsection{Applications and Examples of Infinite-Dimensional Gaussian Graphical Models}

We now illustrate the broad applicability of the theory developed above by means of three examples. To do so, we verify (P5*) by checking the conditions of Theorem \ref{thm:gaussianMR}. These entail (1) a uniform eigenvalue bound on the covariance matrices of the marginals, and (2) a covariance decay condition. For a given class of models, the decay condition is relatively straightforward to establish as compared to the eigenvalue condition. As a result, we will emphasize the verification of the eigenvalue condition in the examples below.

\begin{exa} \label{exa:autoreg} \textbf{(Infinite-dimensional autoregressive model.)} \\ 
Let $X_n$ by a collection of random variables defined by 
$$X_n = \sum_{j=1}^N \beta_{nj} X_{n-j} + \epsilon_n,$$
where $X_0, X_{-1}, ..., X_{-N+1} = 0$, and the $(\epsilon_n)_{n \in \mathbb{N}}$ are i.i.d.\ $\mathcal{N}(0,1)$. Assume that there exists $\delta > 0$ such that for all $n$ we have $\sum_{j=1}^N |\beta_{nj}| < 1 - \delta$. 

The conditions of Theorem \ref{thm:gaussianMR} can be verified for these $X_n$, and so the $X_n$ satisfy properties (IIP), (DCP), and (P5*). It can also be shown that they satisfy (P*) with respect to the graph $\mathcal{G} = (V,E)$ on $\mathbb{N}$ for which $(i,j) \in E$ iff $|i-j| \leq N$. Theorem \ref{thm:CIREquiv} thus implies that the $X_i$ satisfy (G*) with respect to this graph as well. The technical details are contained in the Supplemental section.
\end{exa}

\begin{exa} \label{exa:dd} \textbf{(Diagonally dominant Gaussian processes.)} \\
Let $(X_n)_{n \in \mathbb{N}}$ be a Gaussian process and let $\sigma_{ij} = \cov(X_i,X_j)$. Suppose the $\sigma_{ij}$ are uniformly diagonally dominant, in the sense that there exists some $\epsilon > 0$ such that
\be |\sigma_{ii}| > \epsilon + \sum_{j \neq i}|\sigma_{ij}| \ \ \forall i. \label{ln:bd1} \ee
Suppose also that there is a constant $C$ such that
\be |\sigma_{ii}| \leq C\ \ \forall i, \label{ln:bd2} \ee
and that the function $g_0(i,j) := |\sigma_{ij}|$ and the corresponding $g_1,g_2,g_3,$ and $g_4$ satisfy the bounds of Theorem \ref{thm:gaussianMR}, i.e.\ the easily verifiable covariance decay condition alluded to above.

Combining the uniform diagonal dominance with the Gershgorin circle theorem, we obtain the lower bound of $\epsilon$ on all eigenvalues of $\Sigma_n$ for any $n$. In addition, conditions \eqref{ln:bd1} and \eqref{ln:bd2} combine to show that $2C$ is an upper bound on all row sums of $\Sigma_n$ for any $n$ (where $\Sigma_n$ is the covariance matrix of $(X_1,...,X_n)$), and so is an upper bound on all eigenvalues of $\Sigma_n$ for any $n$. Thus, the conditions of Theorem \ref{thm:density} are satisfied, and so the $(X_n)_{n \in \N}$ satisfy (P5*).

Conditions \eqref{ln:bd1}, \eqref{ln:bd2} and the condition on the $g_i$ are satisfied, for example, if each $n \in \mathbb{N}$ has integer coordinates $c(n) \in \mathbb{Z}^m$, and the covariance between $X_i$ and $X_j$ is determined by applying the powered exponential covariance function to $d(c(i),c(j))$, the Euclidean distance between $c(i)$ and $c(j)$, for certain values of the covariance function parameters. More explicitly, this is the case if the $\sigma_{ij}$ satisfy
$$\sigma_{ij} = \exp(-d(c(i),c(j))^\alpha/V),$$
where $0 < \alpha \leq 2$ and $V$ is positive and sufficiently small (depending on $\alpha$). For more information on this family of covariance functions, known as the powered exponential family, see \cite{Yaglom}.

For the technical details of this example, see the Supplemental section.
\end{exa}

\begin{exa} \label{exa:cov} \textbf{(Gaussian lattice model.)} \\
Let $(X_p)_{p \in \Z^r}$ be a Gaussian process indexed by $\Z^r$ (i.e., on a lattice) satisfying
\bea \sigma_{p_1 p_2} & := & \cov(X_{p_1},X_{p_2})  =  \exp(-d(p_1,p_2)^2/V), \nonumber
\eea
where the function $d$ represents Euclidean distance, and $V > 0$ is arbitrary (so that the covariances are specified by a Gaussian covariance function). Note that unlike the previous example, the above covariances are in general not diagonally dominant.

The Gaussian process with the above covariance function can be shown to satisfy (IIP), (DCP), and (P5*) by verifying the conditions of Theorem \ref{thm:gaussianMR}. For the technical details, see Appendix \ref{app:MR}.
\end{exa}

\begin{rek}
Example \ref{exa:cov} is intended to demonstrate that many processes of interest satisfy (IIP), (DCP), and (P5*). For such processes, if (P*) is satisfied, then (G*) will be satisfied, allowing the notion of an infinite-dimensional graphical model. However, we wish to clarify that our intention in this example is not necessarily to obtain a process for which (P*) is satisfied. 
\end{rek}

\section{Graphical Discrete Processes}\label{sec:discproc}

\subsection{Graphical Models for Infinitely Many Discrete Random Variables}

\begin{defi}
A collection of random variables $\{X_i \ | \ i \in I\}$ is called a \emph{discrete process} provided that each $X_i$ takes values in $\{0,1\}$.
\end{defi}

\begin{rek}
While we restrict our attention in this paper to discrete processes taking on only two values, standard arguments allow the following results to be extended to the setting where the random variables may take one of $r$ distinct values for any finite $r > 2$.
\end{rek}

We now prove Theorem \ref{thm:discreteMR}, which is the central result of this section.

\begin{proof}[Proof of Theorem \ref{thm:discreteMR}]
We first use property (1) to obtain a positive density, which is used to prove (IIP).

Let $A, B, C, D$ be disjoint subsets of $\mathbb{N}$, and let $A, B, C$ be finite. If $D$ is finite, assumption (1) implies that $(X_A,X_B,X_C,X_D)$ has an everywhere-positive density with respect to the counting measure on $\{0,1\}^{|A|+|B|+|C|+|D|}$, and so (P5) will hold. Thus, we may assume $D$ is infinite. Let $m = |A| + |B| + |C|$.

We have by assumption that 
$$c_I < \mathbb{P}((X_A,X_B,X_C) = (a_1,...,a_m) \ | \ X_{d_1}=b_1,...,X_{d_r}=b_r)$$
for all $r$ and any $\{0,1\}$ valued sequence $(b_n)_{n \in \mathbb{N}}$. By the L\'e{}vy zero-one law, 
 \bea & & \lim_{r\rightarrow \infty} \mathbb{P}((X_A,X_B,X_C) = (a_1,...,a_m) \ | \ X_{d_1},...,X_{d_r}) \nonumber \\ 
 & = &  \mathbb{P}((X_A,X_B,X_C) = (a_1,...,a_m) \ | \ X_D ) \nonumber
 \eea
with probability 1 with respect to the marginal distribution, and therefore $\mathbb{P}$-almost surely we have
$$\mathbb{P}((X_A,X_B,X_C) = (a_1,...,a_m) \ | \ X_D ) > c_I.$$

Note that since we only need the above line to hold with respect to the marginal distribution, it is enough that it hold for $\mathbb{P}$-a.e. value of $X_D$.

From here, the argument verifying (IIP) is similar to the proof of Lemma \ref{lem:d1}.

Lemma \ref{lem:d2new} shows that (DCP) holds under the second assumption, and Theorem \ref{thm:d1d2thm} verifies (P5*), finishing the argument.
\end{proof}

\subsection{Applications and Examples of Infinite-Dimensional Discrete Graphical Models}

\begin{exa}\label{exa:discmc} \textbf{(Two-state Markov Chain.)} \\
Let $(X_n)_{n \in \mathbb{N}}$ be a $\{0,1\}$-valued Markov chain, with transition probabilities
$$p_n = \mathbb{P}(X_{n+1} = 1 | X_n = 1) \textrm{\ \ \ \ and \ \ \ \ } t_n = \mathbb{P}(X_{n+1} = 1 | X_n = 0).$$
Suppose that $0 < \mathbb{P}(X_1 = 1) < 1$, and that $0 < p_n, t_n < 1$ for all $n \in \mathbb{N}$. Suppose also that
$$\sum_{n=1}^\infty (1-(p_n-t_n)) = \infty$$

Then the $\{X_n \ | \ n \in \mathbb{N}\}$ satisfy properties (IIP) and (DCP), and hence (P5*).
\end{exa}
\begin{proof}
The proof is an application of Theorem \ref{thm:discreteMR}, and is provided in the Supplemental section.
\end{proof}

In the previous example, the collection of random variables was assumed to have graphical structure (as it forms a Markov chain). We now provide an example which is more general in the sense that it makes no assumption of a graphical relationship between the variables in the discrete process, and which will be useful in Example \ref{exa:infising1} below.

\begin{exa} \label{exa:discreteexample} \textbf{(Sparse Countable Sequences.)} \\
Let $(X_n)_{n \in \mathbb{N}}$ be a $\{0,1\}$-valued discrete stochastic process, and suppose the following:
\ben
\item for all $m \in \N$, there exists $\epsilon_m > 0$ such that for any $\{0,1\}$-valued sequence $(i_1,i_2,...,i_m)$, and any finite $B \subseteq \mathbb{N}$ disjoint from $\{1,...,m\}$,
$$\mathbb{P}((X_1,...,X_m) = (i_1,...,i_m) | X_B = x_B) > \epsilon_m \textrm{, and}$$
\item for $\tau(\omega) := \#\{X_i(\omega) = 1\}$, we have $\tau < \infty$ almost surely.
\een
Then the $\{X_n \ | \ n \in \mathbb{N}\}$ satisfy properties (IIP) and (DCP).
\end{exa}
\begin{proof}
See Appendix \ref{app:MR}.
\end{proof}

\begin{rek}
By exchanging $X_i$ with $Y_i = 1-X_i$, for any $\{0,1\}$-valued sequence $Z_1,Z_2,...$, the above result can be generalized to the case where $\#\{X_i \neq Z_i\} < \infty$ almost surely.
\end{rek}

\subsection{The Ising Model}
\ \\
The Ising model is a well-known and well-studied family of models of finitely many discrete random variables with graphical structure \cite{Snell, Spitzer}. Even infinite-dimensional versions of the Ising model have been well-studied \cite{georgii1988gibbs}, but much of this study has focused around lattice-based models and physics applications.

Our goal in this subsection is to obtain a graph-based infinite-dimensional generalization of the Ising model distribution with the ultimate goal of verifying (IIP) and (DCP), and also (P*) with respect to some nontrivial graph. In order to obtain this generalization in a rigorous manner, we must first establish a number of results about limits of the finite Ising model distributions. Existence and uniqueness results for related distributions have been established by other authors (\eg as in \cite{Dobruschin}), but for the most part these results have relied on a lattice-based framework rather than one based on general graphs, as is our goal. Hence we provide a self-contained development of an infinite-dimensional generalization of the Ising model in our graph-based framework. Our primary purpose in this section is to demonstrate examples of infinite graphical models by appealing to Theorem \ref{thm:discreteMR}. The proofs of the necessary results establishing the existence of the infinite Ising model and other properties thereof are provided in the Supplemental section.

We begin by rigorously introducing the finite-dimensional Ising model.

\begin{defi}
The Ising model consists of a discrete process $X_1,...,X_n$ along with an additional variable $X_0 = 1$. The set of parameters is given by $\Theta = (\theta_{ij})_{(i,j) \in E}$ where $E$ is the edge set of a graph $\mathcal{G} = (V,E)$ (with $V = \{0,1,...,n\}$) such that every nonzero node has an edge to zero. The $X_i$ are then distributed according to the unnormalized density
\be U(X = (x_1,...,x_n)) := \exp\left(\sum_{(i,j) \in E} \theta_{ij} x_i x_j\right). \label{ln:probmeas}\ee
Since $U(X)$ is finite for any choice of $X$, this induces a (normalized) probability distribution $\mathbb{P} \propto U$ on $\{0,1\}^n$.
\end{defi}

It is straightforward to verify from line \eqref{ln:probmeas} above that for this distribution,
\be \mathbb{P}(X_j = 1 | X_{-j} = x_{-j}) = \frac{1}{1+\exp(-\theta_{j0} - \sum_{(j,k) \in E}\theta_{jk}x_k)}, \label{ln:17.30} \ee
where $X_{-j}$ represents the vector containing all of the $X_i$ except $X_j$. From line \eqref{ln:17.30} it is clear that the (finite) Ising model satisfies the Markov property (L), and therefore also (P), with respect to its graph $\mathcal{G}$. In the finite case, Corollary \ref{thm:PearlPaz} ensures that this model satisfies property (G) as well.

\begin{exa} \label{exa:infising1}
For appropriate $\Theta$ it is possible to directly normalize the measure obtained from line \eqref{ln:probmeas} even if the number of variables is infinite. In particular, if $\sum_{x \in \{0,1\}^\infty} U(x) < \infty$, then $\mathbb{P}$ can be obtained by normalization (for this to make sense, we define $\exp(-\infty) = 0$). We remark that if this sum is finite, then at most countably many terms can be nonzero, and so $\mathbb{P}$ will be a discrete measure. 

Suppose that the graph $\mathcal{G} = (\N \cup \{0\}, E)$ is such that every node except the zero node has finite degree. Let $\Theta$ be arbitrary such that $\theta_{jk} \leq 0$ for all $j$ and $k$, and so that $\theta_{k0} \leq -2\log k$. Then the $X_i$ obtained from the measure defined in line \eqref{ln:probmeas} satisfy (IIP) and (DCP), and therefore (P5*) by Theorem \ref{thm:d1d2thm}. In addition, it can be shown that these $X_i$ satisfy (P*) with respect to the induced subgraph of $\mathcal{G}$ excluding the zero node, and thus also satisfy (G*) with respect to this graph by Theorem \ref{thm:CIREquiv}. For the technical details of this claim, which involve an application of the result of Example \ref{exa:discreteexample}, see Appendix \ref{app:MR}.
\end{exa}

In the above example, we relied upon the convergence of the sum of the $U(x)$. We now give a more general treatment of our infinite-dimensional Ising model.

\begin{defi}\label{def:iim}
Let $\mathcal{G} = (V,E)$ be a graph with vertex set $V = \{0\} \cup \mathbb{N}$ such that every nonzero node has an edge to zero. Let $\Theta = (\theta_{ij})_{(i,j) \in E}$. Let $\mathcal{G}_n$ be the induced subgraph on $\{0,1,...,n\}$. 

Define the distribution $\mathbb{P}_n$ on $\{0,1\}^n$ to be the Ising model distribution of $X_1,...,X_n$ with graph $\mathcal{G}_n$ and the same parameter set $\Theta$. Given $1 \leq m \leq n$, define $\mathbb{P}^m_n$ to be the marginal distribution of $X_1,...,X_m$ under $\mathbb{P}_n$. If for all $v \in \{0,1\}^m$ the limit
\be \lim_{n\rightarrow \infty} \mathbb{P}^m_n((X_1,...,X_m)=v) \label{ln:limexist} \ee
exists, we define the distribution $\mathbb{P}^m$ on $\{0,1\}^m$ to be given by this limit. That is,
$$\mathbb{P}^m((X_1,...,X_m) = v) = \lim_{n\rightarrow \infty} \mathbb{P}^m_n((X_1,...,X_m)=v).$$

Finally, we define the infinite Ising model with graph $\mathcal{G}$ and parameter set $\Theta$ to be the distribution on $\{0,1\}^\infty$ with the infinite product $\sigma$-algebra which has the finite-dimensional distributions provided by the $\mathbb{P}^m$ and their marginalizations.
\end{defi}

For ease of notation, we will assume that for $(i,j) \not \in E$ we have $\theta_{ij} = 0$, so that we need not specify that sums be taken only over edges $(i,j) \in E$.

\begin{exa} \label{exa:infising2}
We now present another infinite extension of the Ising model. Assume that $\mathcal{G}$ is a graph with vertex set $\mathbb{N} \cup \{0\}$ such that every node has finite degree except $0$, and $\Theta = (\theta_{ij})_{(i,j) \in E}$ satisfies
$$\sum_{(i,j) \in E} |\theta_{i,j}| < \infty.$$
Under this assumption, there is a limiting joint distribution of the $X_i$ which generalizes the finite-dimensional Ising model, and moreover this distribution is such that
$$f(X) = \sum_{i=1}^\infty 2^{-i} X_i$$
has a density with respect to Lebesgue measure on $[0,1]$. In addition, (IIP), (DCP), and (P5*) are satisfied by these variable. These variables also satisfy (P*) with respect to $\mathcal{G}$, and so satisfy (G*) as well. Details are provided in Appendix \ref{app:MR}.
\end{exa}

\begin{rek}
It is also possible to demonstrate the existence of the limiting distribution from Definition \ref{def:iim} for other examples of $\Theta$ and $\mathcal{G}$. However, we have restricted our attention to the choices of $\Theta$ and $\mathcal{G}$ from Examples \ref{exa:infising1} and \ref{exa:infising2} for the sake of brevity, and also because our goal is to provide sufficient demonstration of the applicability of Theorem \ref{thm:discreteMR}.
\end{rek}

\begin{rek}
Many of the above ideas can also be extended to the case of the generalized log-linear model, where for a given graph $\mathcal{G}$,
$$\log \mathbb{P}(X) = \sum_{A \textrm{ a clique of }\mathcal{G}} f_A(X_A).$$
In particular, similar arguments demonstrate that graphical models satisfying (IIP) and (DCP) can be defined when either $\sum_{A \textrm{ a clique of }\mathcal{G}} f_A(X_A) = -\infty$ for all but countably many choices of $X$, or when the sum $\sum_{A \textrm{ a clique of }\mathcal{G}} |f_A(X_A)|$ is uniformly bounded over all values of $X$.
\end{rek}

\begin{rek}We note here that Georgii \cite{georgii1988gibbs} introduces and studies in great detail an infinite-dimensional formulation of the Ising model in which only the conditional distributions for the model are specified, rather than a joint distribution or any marginals. Georgii takes this approach with the end goal of explaining physical phenomena. For example, he demonstrates that the non-uniqueness of a joint distribution with certain specified conditional distributions can be associated with phase transitions. We are, however, more interested in working with a specific joint distribution and studying its conditional independences. This is why we have provided our own self-contained formulation of an infinite-dimensional generalization of the Ising model which actually determines a specific joint distribution with the desired conditional distributions.
\end{rek}

\noindent \textbf{Acknowledgements:} We thank Apoorva Khare for reading a draft of the manuscript and for giving useful suggestions. We also thank Amir Dembo for providing a useful reference on the Ising model and discussions about the background to the paper.
\bibliography{mybib}{}
\bibliographystyle{acm}

\clearpage
\appendix 
\section{Proofs of Main Results} \label{app:MR}

\subsection{Proofs from Section \ref{sec:GGP}}

\noindent \textbf{Proof of Theorem \ref{thm:density}.}
Suppose that $A \subseteq \mathbb{N}$ is any finite collection of nodes. The claim is trivial when $B$ is finite, so let $B = \{b_1,b_2,...\} \subseteq \mathbb{N}$ be an infinite collection of nodes, and without loss of generality, assume that $b_1 < b_2 < ...$. We will use the notation $B_n = \{b_1,...,b_n\}$.

By the formulas for the conditional distribution of the multivariate normal distribution, we have
\be X_A | X_{B_n} \sim \mathcal{N}\left (\mu_A + \Sigma_{AB_{n}}\Sigma_{B_n}^{-1}(X_{B_n} - \mu_{B_n}),\Sigma_{A|B_n}\right ). \label{ln:dist} \ee

Thus, we have for any event $E \in \sigma(X_A,X_{B_n})$ that
\be \mathbb{P}(E) = \int_E f_n(x_A,x_{b_1},...,x_{b_n}) dx_A d\nu(x_B),\label{ln:densityline} \ee
where $f_n(x_A,X_{B_n})$ is the density of the distribution from line \eqref{ln:dist} (where $X_{B_n}$ is given), and where $\nu$ is the probability measure determining the distribution of $X_B = (x_{b_1},x_{b_2},...)$.

Combining the eigenvalue bounds with the Cauchy interlacing theorem for Schur complements (as in \cite{Zhang}, for example), we obtain that $\Sigma_{A|B_n}$ is a sequence of matrices whose eigenvalues are all uniformly bounded above by $C$, and so the entries are as well. Thus, each entry is confined uniformly in $n$ to a closed and bounded (and therefore sequentially compact) interval. Thus there is a subsequence $n_j$ such that $\Sigma_{A|B_{n_j}}$ converges to a matrix $\Sigma_{A|B}$. (For this argument, it will not be necessary to verify that $\Sigma_{A|B}$ is independent of the choice of the $n_j$.) 

We now consider the conditional mean. Let $K_n$ be the diagonal matrix of the conditional standard deviations, \ie $K_n$ is diagonal with diagonal entries given by 
$$(K_n)_{ii} = \sqrt{\var(X_{b_i}|X_{b_1},...,X_{b_{i-1}},X_{b_{i+1}},...,X_{b_n})}.$$
Note that combining the Cauchy interlacing theorem (again for Schur complements) with the eigenvalue bounds yields $\sqrt{c} < (K_n)_{ii} < \sqrt{C}$. 
Now, consider the matrix 
$$\Sigma_{AB_n}\Sigma_{B_n}^{-1}K_n = \Sigma_{AB_{n}}K_n^{-1} K_n\Sigma_{B_n}^{-1}K_n.$$
As discussed in section 5.1.3 of \cite{Lauritzen}, the diagonal entries of $K_n\Sigma_{B_n}^{-1}K_n$ are 1, and the nondiagonal $(i,j)$ entry of this matrix is the negative of the partial correlation of $X_{b_i}$ and $X_{b_j}$ given the remaining $X_{b_\ell}$, and so is bounded by a constant multiple of the conditional covariance (where this multiple depends on the bounds on the conditional variances provided by $c$ and $C$), giving us more specifically that 
\be (K_n\Sigma_{B_n}^{-1}K_n)_{ij} = O\left (g_0(b_i,b_j) + \sum_{k =1}^\infty g_0(b_i,k)g_0(k,b_j)\right ) = O(g_1(b_i,b_j)). \nonumber \ee
Note that the above argument shows that all conditional covariances are bounded above by a constant multiple of $g_1$ (where this constant multiple depends on $c$). 

Next, we combine the fact that the entries of $K_n^{-1}$ are bounded above by $\sqrt{1/c}$ and that $(\Sigma_{AB_{n}})_{i,j} \leq g_0(a_i,b_j)$ to obtain $(\Sigma_{AB_{n}}K_n^{-1})_{i,j} =O(g_0(a_i,b_j))$.

Thus, we have the bound
\bea
|(\Sigma_{AB_{n}}\Sigma_{B_n}^{-1}K_n)_{i,j}| & = & \left|\sum_{k=1}^n (\Sigma_{AB_{n}}K_n^{-1})_{i,k} (K_n\Sigma_{B_n}^{-1}K_n)_{k,j}\right| \nonumber \\
& = & O\left(\sum_{k=1}^n g_0(a_i,k)g_1(k,b_j)\right) \label{ln:convsum} \\
& = & O(g_2(a_i, b_j)), \nonumber
\eea
where we have used the fact that $g_n(i,j)$ is increasing in $n$.

Because a countable product of sequentially compact spaces is sequentially compact, there is some subsequence $n_{j_k}$ of the $n_j$ such that each entry of $\Sigma_{AB_{n_{j_k}}}\Sigma_{B_{n_{j_k}}}^{-1}K_{n_{j_k}}$ converges as $k \to \infty$, and we have a bound on the limiting entries
$$(\Sigma_{AB}\Sigma_{BB}^{-1}K)_{i,j} = O(g_2(a_i,b_j)).$$
We wish to make it clear that we only attempt to define the expression $(\Sigma_{AB}\Sigma_{BB}^{-1}K)$ considered as a whole, with entries determined as limits in the manner described above. In particular, we do not define $\Sigma_{BB}^{-1}$ or any of the other terms individually.

The quantity $K_n^{-1}(X_{B_n} - \mu_{B_n})$ has a multivariate normal distribution, and each coordinate is an independent $\mathcal{N}(0,1)$ random variable with variance at most $C/c$. Since $K_n^{-1}$ is diagonal, we can define $K^{-1}(X_B - \mu_B)$ as the Gaussian process which, for any $n$, is equal to $K_n^{-1}(X_{B_n} - \mu_{B_n})$ in the first $n$ coordinates, and this is well defined by the Kolmogorov consistency theorem.

We have
\bea
\mu_{A|B_{n_{j_k}}} & = & \mu_{A} + \Sigma_{AB_{n_{j_k}}}\Sigma_{B_{n_{j_k}}}^{-1}\left (X_{B_{n_{j_k}}} - \mu_{B_{n_{j_k}}}\right ) \nonumber \\
& & = \mu_{A} + \Sigma_{AB_{n_{j_k}}}\Sigma_{B_{n_{j_k}}}^{-1}K_{n_{j_k}}K_{n_{j_k}}^{-1}\left (X_{B_{n_{j_k}}} - \mu_{B_{n_{j_k}}}\right ), \nonumber \\
& & \rightarrow \mu_{A} + \Sigma_{AB}\Sigma_{BB}^{-1}KK^{-1}(X_{B} - \mu_{B}) \eea
where the expression in the final line is a vector of convergent sums with probability one. To see that these sums are convergent, note that $(\Sigma_{AB}\Sigma_{BB}^{-1}K)_{k,\ell} = O(g_2(k,\ell))$ with probability one, the entries of $K^{-1}$ are bounded above and so by Proposition \ref{prop:normsumbnd} $(K^{-1}(X_{B} - \mu_{B}))_\ell = O(\ell^{\epsilon})$ with probability one, and finally, by assumption $\sum_{\ell = 1}^\infty  g_2(k,\ell) \ell^\epsilon < \infty$. 

Thus $\mu_{A|B_{n_{j_k}}}$ has a limit $\mu_{A|B}$ as $k \rightarrow \infty$ with probability one.

Now, if for any fixed choice of $x_B$ the function $f_{n_{j_k}}(x_A,x_B) = f_{n_{j_k}}(x_A,x_{B_{n_{j_k}}})$ is the multivariate normal density with mean $\mu_{A|B_{n_{j_k}}}$ and covariance matrix $\Sigma_{A|B_{n_{j_k}}}$ (so that $f(x_A,x_B)$ does not depend on the values of $x_{B \setminus B_{n_{j_{k}}}}$), we have for all $E \in \sigma(X_A,X_{B_{n_{j_k}}})$ that

\be \mathbb{P}(E) = \int_E f_{n_{j_k}}(x_A,x_B) dx_A\ d\nu(x_B). \nonumber \ee

Because of the convergence of the conditional mean and covariance matrix, we have pointwise that
$$f_{n_{j_k}}(x_A,x_B) \rightarrow f(x_A,x_B),$$ 
where $f(x_A,x_B)$ is the multivariate normal density with the corresponding limiting mean and covariance matrix. 

Next, by the uniform eigenvalue bounds on $\Sigma_{A|B_{n_{j_k}}}$, we have that $f_{n_{j_k}}$ is bounded above uniformly in $n_{j_k}$. Thus, we may apply the dominated convergence theorem and obtain that for all $E \in \sigma(X_A,X_{B_{n_{j_k}}})$, we have

\be \mathbb{P}(E) = \int_E f(x_A,x_B) dx_A\ d\nu(x_B). \label{ln:peint} \ee

Since this holds for arbitrary $n_{j_k}$, this equality holds for any $E \in \cup_k \sigma(X_A,X_{B_{n_{j_k}}})$. The set of events for which this equality holds forms a Dynkin system, and since $n_{j_k} \rightarrow \infty$, the equality holds for all $E \in \cup_n \sigma(X_A,X_{B_n})$. Since this is an algebra generating the $\sigma$-algebra $\sigma(X_A,X_B)$, we may conclude by Dynkin's $\pi$-$\lambda$ theorem that the equality in line \eqref{ln:peint} holds for all $E \in \sigma(X_A,X_B)$. 

Thus, we have shown $$X_A|X_B \sim \mathcal{N}(\mu_A + (f_B(X_B))_A, \Sigma_{A|B}).$$
\qed
\ \\

\noindent \textbf{Details of Example \ref{exa:cov}.}
We verify the conditions of Theorem \ref{thm:density} in the case where the dimension $r = 2$, and note that the proof for higher $r$ is similar. For the sake of consistency with the technical notation used elsewhere, we reindex the variables $\{X_z : z \in \Z^2\}$ by $\N$, so that we have a bijection $f: \N \to \Z^2$, and we will from now on denote $X_{f(n)}$ by $X_n$. Note that $f(n)$ still refers to the ``location'' of the variable $X_n$ for the sake of determining covariances.

The uniform upper bound on the eigenvalues of $\Sigma_m$ for each $m \in \N$ is easily verified by bounding the row sums, which is possible due to the Gaussian rate of decay in the covariances, and can be done uniformly over all rows due to the stationarity of the covariance function. The uniform Gaussian rate of decay also allows one to verify the required conditions on $g_r$ for $r \in \{0,1,2,3,4\}$. The arguments are similar to the derivation of line \eqref{ln:gbnd} in Example \ref{exa:autoreg} and we omit them here. However, we must still verify the uniform positive lower bound on the eigenvalues of the matrices $\Sigma_m$. 

From the Cauchy interlacing theorem for principal submatrices (as in \cite{Zhang}), it is enough to verify these lower bounds for a subsequence of the $\Sigma_m$, where we may reorder the $X_n$ to suit our purposes. Moreover, for the sake of verifying the lower bound on the eigenvalues of the $\Sigma_m$, we may assume without loss of generality (again by the Cauchy interlacing theorem) that the coordinate function $f: \N \to \Z^2$ is surjective. Let us order the $X_n$ so that the random variables in the set $\{X_1,...,X_{(2m+1)^2}\}$ correspond to the $(2m+1)^2$ nodes in a square centered at $(0,0)$ with side length $2m+1$, as in Figure \ref{fig:sigmam}.

\begin{figure}
\begin{center}  
\includegraphics[width=3in]{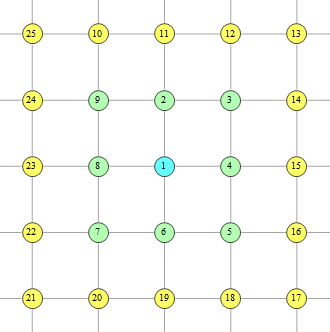}   
\caption{An arrangement of nodes appropriate for the argument in Example \ref{exa:cov}.} \label{fig:sigmam}
\end{center}  
\end{figure}

We now follow arguments similar to those in \cite{Gray} in order to obtain lower bounds on the eigenvalues of the matrices $\Sigma_{(2m+1)^2}$ in terms of a lower bound of a certain Fourier series. Gray works specifically with Toeplitz matrices, but the arguments generalize to this situation because the matrices $\Sigma_{(2m+1)^2}$ are block Toeplitz with Toeplitz blocks (higher-dimensional analogues also exist).

Let $f$ be the function on $[0,2\pi]^2$ defined by
$$f(x,y) = \sum_{j,k=-\infty}^\infty \exp\left (-\frac{j^2+k^2}{V}\right )e^{i(jx + ky)}.$$

This definition implies the relationship
$$ \exp\left (-\frac{j^2 + k^2}{V}\right ) = \frac{1}{4\pi^2} \int_0^{2\pi} \int_0^{2\pi} f(x,y) e^{-i(jx + ky)}dxdy.$$

We have for a length $(2m+1)^2$ vector $x$ that

\begin{eqnarray}
x^* \Sigma_m x & = & \sum_{j,k,r,s=-m}^{m} \exp\left(-\frac{(j-r)^2 + (k-s)^2}{V}\right) x_{(j,k)}x_{(r,s)}. \nonumber \\
& = & \sum_{j,k,r,s=-m}^{m}  \frac{1}{4\pi^2} \int_0^{2\pi} \int_0^{2\pi} f(x,y) e^{-i((j-r)x +( k-s)y)}dxdy\ x_{(j,k)}x_{(r,s)}. \nonumber \\
& = & \frac{1}{4\pi^2} \int_0^{2\pi}\int_0^{2\pi} f(x,y)\left | \sum_{j,r=0}^{m-1} x_{(j,r)} e^{ijx + iry} \right |^2 d x d y. \nonumber
\end{eqnarray}

Similarly,
$$
x^*x = \frac{1}{4\pi^2} \int_0^{2\pi}\int_0^{2\pi} \left | \sum_{j,r=0}^{m-1} x_{(j,r)} e^{ijx + iry} \right |^2 d xd y.
$$

Let $m_f = \essinf f(x,y)$ over $[0,2\pi]^2$, and similarly $M_f = \esssup f(x,y)$. Then combining the above, we have that if $\lambda$ is any eigenvalue of $\Sigma_m$,
$$m_f \leq \min_x \frac{x^* \Sigma_m x}{x^*x} \leq \lambda \leq \max_x \frac{x^* \Sigma_m x}{x^*x} \leq M_f.$$

Recall
\begin{eqnarray}
f(x,y) & = & \sum_{j,k=-\infty}^\infty \exp\left (-\frac{j^2+k^2}{V}\right )e^{i(jx + ky)} \nonumber \\
& = & \left ( \sum_{j=-\infty}^\infty \exp \left (-\frac{j^2}{V}\right )e^{ijx} \right) \left (\sum_{k=-\infty}^\infty \exp \left (- \frac{k^2}{V}\right )e^{iky} \right ), \nonumber 
\end{eqnarray}
and so if we define
$$ g(x,V) = \sum_{j=-\infty}^\infty \exp(- j^2/V)e^{ijx}, $$
then provided that $\min_x g(x,V) > 0$ we have that
$$m_f = \left (\min_x g(x,V)\right )^2$$
since $g(x,V)$ is a continuous function of $x$ for any given $V > 0$. 

Thus, in order to solve our problem, it is enough to show that for any given $V > 0$, the value $\min_x g(x,V)$ is positive. This function $g$ is a special case of the well-understood Jacobi theta function, which is defined by
$$\Theta(z|\tau) = \sum_{n=-\infty}^\infty e^{\pi i n^2 \tau}e^{2\pi i n z}.$$
\noindent In particular, we have that
$$g(x,V) = \Theta\left (\frac{x}{2\pi} \right | \left. \frac{i}{\pi V} \right ).$$
\noindent Thus a specific choice of (real) $V$ corresponds to $\tau = \frac{i}{\pi V}$, and any choice of real $x$ corresponds to $z = \frac{x}{2\pi}$, which is also real.

The zeros of the Jacobi theta function are known: $\Theta(z|\tau) = 0$ if and only if $ z = 1/2 + \tau/2 + n + m\tau$ for some $n,m \in \mathbb{Z}$ \cite{ss}. In particular, for $\tau$ purely imaginary with positive imaginary part, $\Theta(z | \tau)$ is nonzero for all real $z$. Thus, we may conclude that
$$g(x,V) \neq 0 \textrm{ for all } x \in [0,2\pi], V > 0.$$

But for any fixed $x$, the function $g(x,V)$ is real valued and continuous as a function of real $V > 0$, and for all $x$ we have $\lim_{V \to 0} g(x,V) = 1$. Since for any fixed real $x$ we know $g(x,V) \neq 0$ for any $V > 0$, by the intermediate value theorem we may conclude
$$g(x,V) > 0 \textrm{ for all } x \in [0,2\pi], V > 0.$$

Since $g(x,V)$ is a continuous function in $x$ for any fixed $V$, the image of the compact interval $[0,2\pi]$ under $g(\cdot,V)$ is also a compact interval, call it $[a,b]$. By the above argument, the lower endpoint of this interval must be positive, so in particular $g(x,V) \geq a > 0$ for all $x \in [0,2\pi]$. 

Thus we have shown that $\min_x g(x,V) \geq a > 0$ is positive, concluding the proof that the eigenvalues of $\Sigma_{(2m+1)^2}$ are uniformly positive, and thus allowing us to conclude from Theorem \ref{thm:density} that (IIP) and (DCP), and hence (P5*), are satisfied for this process.
\qed \ \\

\begin{rek}
The details of the above example were carried out over $\Z^m$ for $m=2$, but the same argument works for arbitrary dimension $m$ by considering an $m$-dimensional Fourier series, and again reducing the problem to a uniform positive lower bound of $g(x,V)$.
\end{rek}

\begin{rek}
The Jacobi theta function argument demonstrating the positivity of $m_f$ above leverages the form of the Gaussian covariance function. For other covariance functions, it is often straightforward to verify numerically a lower bound on $m_f$, and therefore on the eigenvalues of the $\Sigma_m$. 
\end{rek}\ \\ 

\subsection{Proofs from Section \ref{sec:discproc}}
\noindent \textbf{Details of Example \ref{exa:discreteexample}.}
We appeal to Theorem \ref{thm:discreteMR}. Point (1) of the statement of the example implies point (1) of Theorem \ref{thm:discreteMR}, so we just need to verify point (2).

Suppose that $A \in \sigma(X_n,X_{n+1},...)$, and that $m$ is fixed. Recall that 
$$\mathbb{P}((X_1,...,X_m) = (i_1,...,i_m)|X_B) > \epsilon_m$$
for any finite $B$, and so by the L\'evy zero-one law, we have $\mathbb{P}((X_1,...,X_m) = (i_1,...,i_m)|X_B) \geq \epsilon_m$ a.s.\ $X_B$ for any $I = (i_1,...,i_m)$ and any (potentially infinite) $B \subseteq \{m+1,m+2,...\}$. 

Let $B \subseteq \{m+1,m+2,...\}$ be arbitrary. Note that since $ \mathbb{E}[\tau] < \infty,$ we have that $\#\{b \in B | X_b(\omega) = 1\} < \infty$ almost surely, so we may restrict our attention to $X_B$ which are nonzero in only finitely many places.

Define the events
$$C_n := \{X_i = 0 \textrm{ for all }i \geq n, i \not \in B\}\textrm{, and}$$ 
$$D_n := C_n^c = \{X_i = 1 \textrm{ for some } i \geq n, i \not \in B\}.$$ 
Note that by the definition of $\sigma(X_n,X_{n+1},...)$, either $C_n \cap A = C_n$ or $C_n \cap A = \emptyset$. We treat these two cases separately. 

First, suppose $C_n \cap A = C_n$. Then
$$\mathbb{P}(C_n|X_1,...,X_m) \leq \mathbb{P}(A|X_1,...,X_m) \leq 1,$$
and so
\bea 
& &  \var(\mathbb{P}(A|(X_1,...,X_m)) = (i_1,...,i_m),X_B = x_B) \nonumber \\
& \leq & 1 - \mathbb{P}(C_n|(X_1,...,X_m) = (i_1,...,i_m),X_B=x_B) \nonumber \\
& =    & \mathbb{P}(D_n|(X_1,...,X_m) = (i_1,...,i_m),X_B = x_B) \nonumber \\
& =    & \frac{\mathbb{P}(D_n,(X_1,...,X_m) = (i_1,...,i_m)|X_B = x_B)}{\mathbb{P}((X_1,...,X_m) = (i_1,...,i_m) | X_B = x_B)} \nonumber \\
& \leq & \frac{\mathbb{P}(D_n|X_B = x_B)}{\mathbb{P}((X_1,...,X_m) = (i_1,...,i_m) | X_B = x_B)} \nonumber \\
& \leq & \frac{1}{\epsilon_m}\mathbb{P}(D_n|X_B = x_B) =: g_{m,B,x_B}(n). \nonumber
\eea

On the other hand, suppose that $C_n \cap A = \emptyset$. Then $A \subseteq D_n$, so 
$$0 \leq \mathbb{P}(A|(X_1,...,X_m) = (i_1,...,i_m)) \leq \mathbb{P}(D_n |(X_1,...,X_m) = (i_1,...,i_m)),$$ and so
\bea 
& &  \var(\mathbb{P}(A|(X_1,...,X_m)) = (i_1,...,i_m),X_B = x_B) \nonumber \\
& \leq & {P}(D_n|(X_1,...,X_m) = (i_1,...,i_m),X_B=x_B) \nonumber \\
& \leq & \frac{1}{\epsilon_m}\mathbb{P}(D_n|X_B = x_B) = g_{m,B,x_B}(n). \nonumber
\eea

Thus, to verify (2) of Theorem \ref{thm:discreteMR}, it is enough to check that ${P}(D_n|X_B = x_B)$ tends to zero as $n$ tends to infinity. As noted above, we may assume that $\{b \in B \ | \ x_b = 1\}$ is finite, and let $r = \max \{b \in B \ |\ x_b = 1\}$. Let $B_r = B \cap \{1,...,r\}$. Then for all $n > r$,
\be \mathbb{P}(D_n|X_B = x_B) \leq \mathbb{P}(D_n|X_{B_r} = x_{B_r}) \leq  \frac{1}{\epsilon_r} \mathbb{P}(D_n). \nonumber \ee
So it is enough to show that $\lim_{n \to \infty} \mathbb{P}(D_n) = 0$. 

By assumption, $\tau := \#\{X_i(\omega) = 1\} < \infty$ almost surely, and so $\max \{ i \ | \ X_i(\omega) = 1\} < \infty$ almost surely. Therefore,
\bea \mathbb{P}(D_n) & \leq & \mathbb{P}(\max \{ i \ | \ X_i(\omega) = 1\} \geq n) \nonumber \\
& \rightarrow & 0 \textrm{ as } n \rightarrow \infty,\nonumber \eea
concluding the proof. \qed
\ \\

\noindent \textbf{Details of Example \ref{exa:infising1}.}
For $\Theta$ satisfying these conditions, $\sum_{n=1}^\infty \theta_{k_n0} = -\infty$ for any choice of $k_n$, and so the only $x$ for which $U(x)$ could possibly be nonzero are those with only finitely many $x_i = 1$. 

We have that for some constant $C$, 
$$\sum_{k=n}^\infty \exp(\theta_{k0}) =: C_n \leq \sum_{k=n}^\infty \frac{1}{k^2} \leq \frac{C}{n}.$$

Because of this, we have that
\be \sum_{x} U(x) \leq  \sum_{n=1}^\infty \left[\sum_{k_1<...<k_n} \exp\left(\sum_{j=1}^n \theta_{k_j0}\right)\right] \leq  \sum_{n=1}^\infty \prod_{k=1}^n C_k \leq \sum_{n=1}^\infty \frac{C^n}{n!} \leq \exp(C). \nonumber
\ee

Thus we may normalize $U$ to a probability distribution $\mathbb{P}$ on $\{0,1\}^\infty$, and we note that under $\mathbb{P}$, the quantity $\#\{X_i = 1\}$ is almost surely finite. 

Next, for any finite $B$ disjoint from $\{1,...,m\}$, we have
\bea & & \mathbb{P}((X_1,...,X_m) = (i_1,...,i_m) | X_B = x_B) \nonumber \\
& \geq & \min_{x} \mathbb{P}((X_1,...,X_m) = (i_1,...,i_m) | X_{m+1}=x_{m+1},X_{m+2}=x_{m+2},...) \nonumber \eea

In the following lines, we will use the notation that for $n > m$ we have $i_n = x_n$ and $Y_n = x_n$. Then for a specific choice of $x$, we have
\bea & & \mathbb{P}((X_1,...,X_m) = (i_1,...,i_m) | X_{m+1}=x_{m+1},X_{m+2}=x_{m+2},...) \nonumber \\
& = & \frac{\exp(\sum_{(j,k) \in E, j\textrm{ or }k \in [1,m]} \theta_{jk}i_ji_k)}{\sum_{Y \in \{0,1\}^m} \exp(\sum_{(j,k) \in E, j\textrm{ or }k \in [1,m]} \theta_{jk} Y_j Y_k)}, \nonumber \eea
and since the graph under consideration has finite degree at each nonzero node, and $\theta_{j,k} > -\infty$ for each $(j,k) \in E$, the denominator in the above expression is bounded above uniformly in the choice of $x$, and so we have
$$\mathbb{P}((X_1,...,X_m) = (i_1,...,i_m) | X_B = x_B) > \epsilon_m$$
uniformly in $B$ for some positive $\epsilon_m$.

Thus, by the result of Example \ref{exa:discreteexample}, this collection of $X_i$ satisfies (IIP) and (DCP).

Note moreover that it is straightforward to directly verify that line \eqref{ln:17.30} holds for any (even infinite) collection of $(X_n)_{n \in \mathbb{N}}$ satisfying the above hypotheses, and so the $X_n$ satisfy (P*) with respect to the induced subgraph of $\mathcal{G}$ obtained by removing the zero node.
\qed
 \ \\

The following result is used in verifying the details of Example \ref{exa:infising2}. 

\begin{prop} \label{prop:ising}
Given a graph $\mathcal{G} = (\{0\}\cup\mathbb{N},E)$ and parameter set $\Theta$, the infinite Ising model (as in Definition \ref{def:iim}) with this graph and parameter set is well-defined and satisfies $\mathbb{P}((X_1,...,X_m) = v) > 0$ for all $m, v$ if and only if
$$\lim_{n \to \infty} f_m(v,n)$$
exists, and is finite and nonzero for all $v \in \{0,1\}^m$ and $m \in \mathbb{N}$, where
$$ f_m(v,n) := \frac{\sum_{X \in \{0,1\}^{n-m}}\exp\left(\sum_{i,j > m} \theta_{ij} X_i X_j + \sum_{i\leq m, j > m} \theta_{ij} v_i X_j\right)}
{\sum_{X \in \{0,1\}^{n-m}}\exp\left(\sum_{i,j > m} \theta_{ij} X_i X_j + \sum_{j > m} \theta_{0j} X_j\right)}.$$
\end{prop}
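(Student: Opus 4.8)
The plan is to collapse the whole statement into one explicit formula for the finite marginal $\mathbb{P}^m_n$ and then read off both directions of the equivalence. First I would write $\mathbb{P}^m_n(v)$ straight from the definition as $\sum_{x_{m+1},\dots,x_n}U(v,x_{m+1},\dots,x_n)$ divided by the partition function $Z_n=\sum_x U(x)$, where $U$ is the unnormalized Ising weight on $\mathcal{G}_n$ and $x_0=1$. The decisive algebraic move is to split the energy $\sum_{0\le i<j\le n}\theta_{ij}x_ix_j$ into the part $A(v):=\sum_{1\le i<j\le m}\theta_{ij}v_iv_j+\sum_{1\le j\le m}\theta_{0j}v_j$, which involves only $x_1,\dots,x_m=v$, and a remaining part that couples $v$ to the tail variables $x_{m+1},\dots,x_n$ (together with the internal tail energy and the tail's coupling to node $0$, which survives because $x_0=1$). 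Summing out the tail, the $v$-only factor $e^{A(v)}$ pulls out and what remains is exactly the numerator of $f_m(v,n)$. I would then observe that the denominator of $f_m(v,n)$ is this same tail sum at the zero configuration: setting $v=\mathbf 0$ kills every coupling of the tail to $X_1,\dots,X_m$ but retains $\sum_{j>m}\theta_{0j}x_j$, which is precisely the denominator. Hence $A(\mathbf 0)=0$ and $f_m(\mathbf 0,n)=1$ for every $n$.

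Dividing numerator and denominator of $\mathbb{P}^m_n(v)$ by the ($v$-independent) denominator of $f_m$, which cancels, yields the central identity
$$\mathbb{P}^m_n(v)=\frac{e^{A(v)}f_m(v,n)}{\sum_{v'\in\{0,1\}^m}e^{A(v')}f_m(v',n)}.$$
The normalizing sum is a finite sum of positive terms whose $v'=\mathbf 0$ summand equals $1$, so it is always $\ge 1$ and there is never a division by zero. For the implication assuming $\lim_n f_m(v,n)$ exists, is finite and nonzero, the right-hand side converges because the sum over $v'$ is finite and its limit $\sum_{v'}e^{A(v')}f_m(v')$ is finite and positive; thus $\mathbb{P}^m(v):=\lim_n\mathbb{P}^m_n(v)$ exists, and since its numerator $e^{A(v)}f_m(v)$ is strictly positive (the parameters being finite reals, $A(v)$ is finite), we get $\mathbb{P}^m(v)>0$. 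Consistency of the family $\{\mathbb{P}^m\}$ is inherited from that of the finite marginals $\mathbb{P}^m_n$ by passing the (finite) marginalization sums to the limit, so the model of Definition \ref{def:iim} is genuinely well-defined.

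For the converse, assuming the model is well-defined with $\mathbb{P}^m(v)>0$ for all $m,v$, I would divide the central identity at $v$ by the identity at $v=\mathbf 0$; the common denominator cancels and, using $A(\mathbf 0)=0$ and $f_m(\mathbf 0,n)=1$, this gives $f_m(v,n)=e^{-A(v)}\,\mathbb{P}^m_n(v)/\mathbb{P}^m_n(\mathbf 0)$. Since $\mathbb{P}^m_n(v)\to\mathbb{P}^m(v)$ and $\mathbb{P}^m_n(\mathbf 0)\to\mathbb{P}^m(\mathbf 0)>0$, the right-hand side converges to $e^{-A(v)}\mathbb{P}^m(v)/\mathbb{P}^m(\mathbf 0)$, which is finite (positive denominator limit) and nonzero (positive numerator).

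Essentially all of the content lives in the first paragraph: the only real obstacle is the bookkeeping that correctly factors the Ising energy and identifies the numerator of $f_m(v,n)$ with the marginalized tail weight carrying the configuration $v$, and its denominator with the same weight at $v=\mathbf 0$ (in particular keeping track of the node-$0$ coupling, which persists through marginalization). Once that identity is established the equivalence is a soft limiting argument; the two things to watch are that the normalizer stays bounded away from zero (guaranteed by the $v'=\mathbf 0$ term) and that positivity of the marginals keeps $A(v)$ and the relevant ratios finite.
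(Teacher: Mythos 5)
Your proof is correct and follows essentially the same route as the paper, which establishes the identity relating $f_m(v,n)$ to the ratio $\mathbb{P}^m_n(v)/\mathbb{P}^m_n(\mathbf{0})$ (Proposition \ref{prop:fmvn}) and then obtains well-definedness by passing consistency of the finite marginals to the limit (Lemma \ref{lem:isinglem}); your central identity is the same one with the normalizer $F_m(n)$ made explicit. If anything you are slightly more careful than the paper: your bookkeeping retains the $v$-dependent, $n$-independent prefactor $e^{A(v)}$ coming from the internal energy of $(X_0,X_1,\dots,X_m)$, which the paper's displayed identity \eqref{ln:fmvnalt} omits but which does not affect the existence, finiteness, or nonvanishing of the limits.
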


\begin{proof}
The proof is included in the Supplemental section.
\end{proof}\ \\

\noindent \textbf{Details of Example \ref{exa:infising2}.}
We first appeal to Proposition \ref{prop:ising} to obtain the existence of the joint limiting distribution.

By definition,
$$
f_m(v,n) =  \frac{\sum_{X \in \{0,1\}^{n-m}}\exp\left(\sum_{i,j > m}^n \theta_{ij} X_i X_j + \sum_{i\leq m, j > m}^n \theta_{ij} v_i X_j\right)}
{\sum_{X \in \{0,1\}^{n-m}}\exp\left(\sum_{i,j > m}^n \theta_{ij} X_i X_j + \sum_{j > m}^n \theta_{0j} X_j\right)},
$$
and so for sufficiently large $n$ (such that $n+1$ is not adjacent to any nonzero node with index less than or equal to $m$) 
$$
f_m(v,n+1) =  \frac{\displaystyle \sum_{X \in \{0,1\}^{n-m}}\exp\left(\sum_{i,j > m}^n \theta_{ij} X_i X_j + \sum_{i\leq m, j > m}^n \theta_{ij} v_i X_j\right)\left (1 + \exp \left ( \sum_{j \leq n} \theta_{(n+1)j} X_j \right )\right )}
{\displaystyle \sum_{X \in \{0,1\}^{n-m}}\exp\left(\sum_{i,j > m}^n \theta_{ij} X_i X_j + \sum_{j > m}^n \theta_{0j} X_j\right)\left (1 + \exp \left ( \sum_{j \leq n} \theta_{(n+1)j} X_j \right )\right )}.
$$

Define 
$$\alpha_n := \frac{f_m(v,n+1)}{f_m(v,n)}, \quad \beta_n := \sum_{j \leq n} |\theta_{nj}|.$$

Note that $\sum_n \beta_n < \infty$ by hypothesis. 

Thus, based on the above computations we have
\bea
|\alpha_n - 1| & \leq &  \max_{X,Y \in \{0,1\}^{n-m}} \left | 1 - \frac{1+\exp\left (\sum_{j\leq n} \theta_{(n+1)j}X_j\right )}{1+\exp\left (\sum_{j\leq n} \theta_{(n+1)j}Y_j\right )} \right | \nonumber \\
& \leq & \max\left  (\left | 1 - \frac{1+\exp(\beta_n)}{1+\exp(-\beta_n)}\right |, \left | 1 - \frac{1+\exp(-\beta_n)}{1+\exp(\beta_n)}\right |\right ) \nonumber \\ 
& = & \frac{\exp(\beta_n) - \exp(-\beta_n)}{1+\exp(-\beta_n)}. \nonumber
\eea

Noting that $\beta_n \to 0$, we have from the previous line that for all sufficiently large $n$, the quantity $|\alpha_n - 1| \leq 2 \beta_n$. Since $\sum \beta_n < \infty$, the above bound implies that the product $\prod_{n=1}^\infty \alpha_n$ exists and is nonzero and finite. Since $f_m(v,1) > 0$ and 
$$f_m(v,N+1) = f_m(v,1)\prod_{n=1}^N \alpha_n,$$
we may therefore conclude that the limit $\lim_{n \to \infty} f_m(v,n)$ exists and is nonzero for all $v$ and $m$. Thus, by Proposition \ref{prop:ising}, we obtain the existence of a limiting distribution which satisfies
\bea & & \mathbb{P}((X_{a_1},...,X_{a_n}) = (i_1,...,i_n) | (X_{b_1},...,X_{b_m}) = (j_1,...,j_m)) \nonumber \\
& \geq & \min_{j \in \{0,1\}^{\#\neigh(A)}} \mathbb{P}((X_{a_1},...,X_{a_n}) = (i_1,...,i_n) | X_{\neigh(A)} = j) \nonumber \\
& =: & \epsilon_n > 0 \nonumber \eea
uniformly over all finite choices of $B$ and $j$.

In addition, if $B$ is arbitrary and $A \in \sigma(X_B,X_n,X_{n+1},...,X_N)$ then 
\bea & & \var(\mathbb{P}(A|X_1,...,X_m,X_B)|X_B=x_B) \nonumber \\
& \leq & \max \mathbb{P}(A|X_1,...,X_m,X_B) - \min \mathbb{P}(A|X_1,...,X_m,X_B) \nonumber \\
& \leq & \max \mathbb{P}(A|X_{\neigh(A)}) - \min \mathbb{P}(A|X_{\neigh(A)}) \nonumber \\
& \leq & 1 - \frac{\exp(-\sum_{j,k > \min(\{n\}\cup\neigh\{n,...,N\})} |\theta_{jk}|)}{\exp(\sum_{j,k > \min(\{n\}\cup\neigh\{n,...,N\})} |\theta_{jk}|)}, \nonumber \eea
and this bound goes to zero as $n$ goes to infinity since each node in $\mathcal{G}$ is adjacent to at most finitely many others (and so $\min \neigh(A) \rightarrow \infty$), and since $\sum_{(j,k) \in E} |\theta_{jk}| < \infty$. 

Thus, both requirements of Theorem \ref{thm:discreteMR} are satisfied, and so this example satisfies (IIP) and (DCP), and also (P5*) by Theorem \ref{thm:d1d2thm}.
 
We now show that (P*) is satisfied. The same argument used in Example \ref{exa:infising1} allows us to compute the conditional distributions for each $\mathbb{P}_n$ and show that the $X_n$ satisfy (L*) with respect to the graph $\mathcal{G}_n = \mathcal{G} \cap \{1,...,n\}$ and the probability distribution $\mathbb{P}_n$ for all sufficiently large $n$. That is, for any finite $B \subseteq \mathbb{N}$ disjoint from $A$ and the neighbor set of $A$, we have $A \ci B \ | \ \neigh(A)$ with respect to $\mathbb{P}_n$ for all sufficiently large $n$, and therefore for the limiting $\mathbb{P}$ as well. Noting that the conditional independence of two infinite-dimensional processes is equivalent to the conditional independence of all of their finite-dimensional distributions, we obtain that (L*) is satisfied for the infinite-dimensional limiting distribution $\mathbb{P}$, and therefore that (P*) is as well.

Finally, we demonstrate the existence of the density for the quantity
$$f(X) = \sum_{i=1}^\infty 2^{-i} X_i.$$

For any $m < n$, we have
\be \mathbb{P}^m_{n}((X_1,...,X_m) = (x_1,...,x_m)) = \frac{\sum_{X : X_k = x_k, k \leq m} \exp(\sum_{(i,j) \in E}^n \theta_{ij} X_iX_j)}{\sum_{X \in \{0,1\}^n} \exp(\sum_{(i,j) \in E}^n \theta_{ij} X_iX_j)}, \nonumber \ee
and we can provide bounds on this quantity of the form
\be \mathbb{P^m}_{n}((X_1,...,X_m) = (x_1,...,x_m))  \leq  \frac{2^{n-m} \exp(\sum |\theta_{ij}|)}{2^n \exp(-\sum |\theta_{ij}|)} = \frac{\exp(2\sum|\theta_{ij}|)}{2^m}, \nonumber \ee
and also
\be \mathbb{P}^m_{n}((X_1,...,X_m) = (x_1,...,x_m)) \geq \frac{2^{n-m} \exp(-\sum |\theta_{ij}|)}{2^n \exp(\sum |\theta_{ij}|)}
= \frac{\exp(-2\sum|\theta_{ij}|)}{2^m}. \nonumber \ee

Thus, we have shown that for any $n > m$, we have a constant $C = \exp(2\sum|\theta_{ij}|)$ such that
$$\frac{1}{C2^m} \leq \mathbb{P}^m_{n}((X_1,...,X_m) = (x_1,...,x_m)) \leq \frac{C}{2^m},$$
and so this holds for the finite-dimensional distribution of the limiting distribution, $\mathbb{P}^m$, as well. This implies that the limiting distribution $\mathbb{P}^m$ is such that
$$f_m(X) := \sum_{i=1}^m 2^{-i} X_i$$
has a density with respect to Lebesgue measure for each $m$, and moreover that the limiting function $f$ and distribution $\mathbb{P}$ (obtained by letting $m \to \infty$) do as well, by taking the pointwise limit and appealing to the dominated convergence theorem.
\qed
\ \\

\subsection{Auxiliary Lemmas}
We conclude the appendix with three auxiliary lemmas that have been useful at various points in the paper.

\begin{lem}\label{lem:CEE}
Suppose that $\mathcal{F} \subseteq \mathcal{G}$ are $\sigma$-algebras on a probability space $(\mathbb{P},\mathcal{H},\Omega)$ such that for any $A \in \mathcal{G}$ there is some $A_F \in \mathcal{F}$ such that $\mathbb{P}(A \Delta A_F) = 0$. Then for any bounded measurable function $f$ we have $\mathbb{E}[f | \mathcal{F}] = \mathbb{E}[f | \mathcal{G}]$ almost surely.
\end{lem}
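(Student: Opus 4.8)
The plan is to verify that $\mathbb{E}[f \mid \mathcal{F}]$ is itself a valid version of $\mathbb{E}[f \mid \mathcal{G}]$, and then invoke the almost-sure uniqueness of conditional expectation. Recall that $\mathbb{E}[f \mid \mathcal{G}]$ is characterized, up to $\mathbb{P}$-null sets, as the unique $\mathcal{G}$-measurable integrable function $h$ satisfying $\int_G h \, d\mathbb{P} = \int_G f \, d\mathbb{P}$ for every $G \in \mathcal{G}$. Since $f$ is bounded and $\mathbb{P}$ is a probability measure, all integrals in sight are finite, so integrability is never an issue.

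First I would observe that $\mathbb{E}[f \mid \mathcal{F}]$ is $\mathcal{F}$-measurable by definition, and hence $\mathcal{G}$-measurable because $\mathcal{F} \subseteq \mathcal{G}$. Thus $\mathbb{E}[f \mid \mathcal{F}]$ is an admissible candidate for $\mathbb{E}[f \mid \mathcal{G}]$, and it remains only to check the defining integral identity against arbitrary test sets $G \in \mathcal{G}$.

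The crux is the following computation. Fix $G \in \mathcal{G}$ and, using the hypothesis, choose $A_F \in \mathcal{F}$ with $\mathbb{P}(G \, \Delta \, A_F) = 0$. Since integrating any integrable function over two sets that differ by a null set yields the same value, I would write
$$\int_G \mathbb{E}[f \mid \mathcal{F}] \, d\mathbb{P} = \int_{A_F} \mathbb{E}[f \mid \mathcal{F}] \, d\mathbb{P} = \int_{A_F} f \, d\mathbb{P} = \int_G f \, d\mathbb{P},$$
where the outer two equalities use $\mathbb{P}(G \, \Delta \, A_F) = 0$, and the middle equality is precisely the defining property of $\mathbb{E}[f \mid \mathcal{F}]$ applied to the set $A_F \in \mathcal{F}$. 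This is the one place where the equivalence hypothesis is used: it converts a $\mathcal{G}$-test set into an $\mathcal{F}$-test set at the cost of a null set, which the integral does not detect.

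Having shown that $\mathbb{E}[f \mid \mathcal{F}]$ is $\mathcal{G}$-measurable and satisfies $\int_G \mathbb{E}[f \mid \mathcal{F}] \, d\mathbb{P} = \int_G f \, d\mathbb{P}$ for all $G \in \mathcal{G}$, I would conclude by the uniqueness of conditional expectation that $\mathbb{E}[f \mid \mathcal{F}] = \mathbb{E}[f \mid \mathcal{G}]$ almost surely. There is no substantial obstacle here; the only subtlety worth flagging is the elementary but essential fact that indicator integrals see a set only modulo null sets, which is exactly what makes the displayed chain of equalities go through.
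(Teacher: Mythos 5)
Your proof is correct and follows essentially the same route as the paper: both arguments verify that $\mathbb{E}[f \mid \mathcal{F}]$ is $\mathcal{G}$-measurable and then check the defining integral identity on an arbitrary $G \in \mathcal{G}$ by swapping $G$ for an $\mathcal{F}$-measurable set $A_F$ differing from it by a null set. No gaps.
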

\begin{proof}
Since $\mathcal{F} \subseteq \mathcal{G}$, we have that $\mathbb{E}[f | \mathcal{F}]$ is $\mathcal{G}$-measurable. Now, let $S \in \mathcal{G}$ be arbitrary, and $S_F \in \mathcal{F}$ such that $\mathbb{P}(S \Delta S_F) = 0$. Then 
\be \mathbb{E}[1_S \mathbb{E}[f | \mathcal{F}]] = \mathbb{E}[1_{S_F} \mathbb{E}[f | \mathcal{F}]] = \mathbb{E}[1_{S_F} f] = \mathbb{E}[1_{S} f] \label{ln:ced1} \ee
where in line \eqref{ln:ced1} we have used the definition of conditional expectation to obtain the second equality, and for the other two the fact that $\mathbb{P}(S \Delta S_F) = 0$, which ensures that these steps cause the expression to change by an integral over a set of measure zero, and therefore preserve equality.
\end{proof}
\ \\

\begin{prop} \label{prop:normsumbnd}
Let $Z_1,Z_2,...$ be a collection of standard normal random variables (not necessarily i.i.d.). Then for any $\delta > 0$, we have
$$\mathbb{P}(Z_n < n^\delta \textrm{ for all sufficiently large }n) = 1.$$
\end{prop}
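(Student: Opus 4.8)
The plan is to recast the statement as a Borel--Cantelli problem. Writing $A_n = \{Z_n \geq n^\delta\}$, the event that $Z_n < n^\delta$ fails for infinitely many $n$ is exactly $\limsup_n A_n$, so the claim is equivalent to $\mathbb{P}(\limsup_n A_n) = 0$. The crucial observation is that the first Borel--Cantelli lemma requires no independence assumption, so it applies verbatim to our collection $(Z_n)$, which is only assumed to have standard normal \emph{marginals}; it therefore suffices to prove that $\sum_n \mathbb{P}(A_n) < \infty$.

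For the tail estimate, I would use the standard Gaussian (Chernoff) bound: for a single $Z \sim \mathcal{N}(0,1)$ and any $t \geq 0$, optimizing $\mathbb{P}(Z \geq t) \leq e^{-st}\mathbb{E}[e^{sZ}] = e^{-st + s^2/2}$ over $s$ (the minimum occurring at $s = t$) gives $\mathbb{P}(Z \geq t) \leq e^{-t^2/2}$. Applying this with $t = n^\delta$ to each marginal yields $\mathbb{P}(A_n) = \mathbb{P}(Z_n \geq n^\delta) \leq e^{-n^{2\delta}/2}$.

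It remains to check summability of $\sum_n e^{-n^{2\delta}/2}$. Since $n^{2\delta} \to \infty$, for all sufficiently large $n$ we have $n^{2\delta}/2 \geq 2\log n$, hence $e^{-n^{2\delta}/2} \leq n^{-2}$; comparison with the convergent series $\sum_n n^{-2}$ gives $\sum_n \mathbb{P}(A_n) < \infty$. The first Borel--Cantelli lemma then yields $\mathbb{P}(\limsup_n A_n) = 0$, i.e.\ almost surely $Z_n < n^\delta$ for all sufficiently large $n$.

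As for the main obstacle: there is essentially none of substance, and the one point worth flagging is precisely the temptation to reach for the second Borel--Cantelli lemma or an independence-based argument, neither of which is available here since no joint structure on the $Z_n$ is assumed. The proof goes through only because the \emph{first} Borel--Cantelli lemma is indifferent to the joint law of the $Z_n$, so that the summable Gaussian tail of each marginal alone forces the $A_n$ to occur only finitely often.
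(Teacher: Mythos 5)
Your proof is correct and follows essentially the same route as the paper: the paper's argument is just the first Borel--Cantelli lemma written out as a union bound (explicitly noting that no independence is needed), combined with the same summable tail estimate $\mathbb{P}(Z_n \geq n^\delta) \leq e^{-n^{2\delta}/2}$ and the same comparison with $\sum_n n^{-2}$. The only cosmetic difference is that the paper obtains the tail bound from the standard Gaussian tail inequality $\mathbb{P}(Z \geq t) \leq \frac{1}{t\sqrt{2\pi}}e^{-t^2/2}$ rather than from a Chernoff argument.
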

\begin{proof}
We have
\bea
& & \mathbb{P}(Z_n < n^\delta \textrm{ for all sufficiently large }n) = \mathbb{P}(\cup_{m=1}^\infty \cap_{n=m}^\infty Z_n < n^\delta) =  \lim_{m\rightarrow \infty} \mathbb{P}(\cap_{n=m}^\infty Z_n < n^\delta) \nonumber \\
& = & \lim_{m\rightarrow \infty} 1 - \mathbb{P}(\cup_{n=m}^\infty Z_n \geq n^\delta) \geq  1 -  \lim_{m\rightarrow \infty} \sum_{n=m}^\infty \mathbb{P}(Z_n \geq n^\delta). \label{ln:sumlim}
\eea
Thus, to show the claim it is enough to show that 
\be \sum_{n=1}^\infty \mathbb{P}(Z_n \geq n^\delta) < \infty, \label{ln:mustconverge} \ee
since then the final limit from line \eqref{ln:sumlim} will be zero.

By the standard tail bounds for the normal distribution, we have
\be 
\sum_{n=1}^\infty \mathbb{P}(Z_n \geq n^\delta) \leq \sum_{n=1}^\infty \frac{1}{n^\delta\sqrt{2\pi}}\exp(-n^{2\delta}/2) \leq \sum_{n=1}^\infty \exp(-n^{2\delta}/2). \label{ln:termcomp} \ee
Now, a comparison with $1/n^2 = \exp(-2\log(n))$ shows that the terms from line \eqref{ln:termcomp} are eventually bounded above by the terms of this convergent series, and so we have verified line \eqref{ln:mustconverge}, and hence the claim.
\end{proof}

\begin{prop} \label{prop:algapprox}
Let $(\Omega,\mathcal{F},\mathbb{P})$ be a probability space, and let $\mathcal{A} \subseteq \mathcal{F}$ be an algebra generating $\mathcal{F}$. Then for all $B \in \mathcal{F}$ and $\epsilon > 0$, we can find $A \in \mathcal{A}$ such that
$$\mathbb{P}(A\Delta B) < \epsilon.$$
\end{prop}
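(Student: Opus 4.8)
The plan is to use the standard ``good sets'' argument combined with the monotone class theorem. Define
$$\mathcal{D} := \{B \in \mathcal{F} : \text{for every } \epsilon > 0 \text{ there exists } A \in \mathcal{A} \text{ with } \mathbb{P}(A \Delta B) < \epsilon\}.$$
The goal is to show $\mathcal{D} = \mathcal{F}$. Trivially $\mathcal{A} \subseteq \mathcal{D}$ (take $A = B$), so it will suffice to prove that $\mathcal{D}$ is a monotone class; since the smallest monotone class containing the algebra $\mathcal{A}$ coincides with $\sigma(\mathcal{A}) = \mathcal{F}$, this yields $\mathcal{F} \subseteq \mathcal{D}$, which is the claim.

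First I would verify that $\mathcal{D}$ is an algebra. Closure under complementation is immediate from the identity $A^c \Delta B^c = A \Delta B$ together with the fact that $\mathcal{A}$ is closed under complements. For closure under finite unions, given $B_1, B_2 \in \mathcal{D}$ and $\epsilon > 0$, I would choose $A_i \in \mathcal{A}$ with $\mathbb{P}(A_i \Delta B_i) < \epsilon/2$; the inclusion $(A_1 \cup A_2) \Delta (B_1 \cup B_2) \subseteq (A_1 \Delta B_1) \cup (A_2 \Delta B_2)$ together with subadditivity then gives $\mathbb{P}((A_1 \cup A_2) \Delta (B_1 \cup B_2)) < \epsilon$, with $A_1 \cup A_2 \in \mathcal{A}$.

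The key step is closure under increasing countable unions. Suppose $B_n \in \mathcal{D}$ with $B_n \uparrow B$. By continuity of $\mathbb{P}$ from below, $\mathbb{P}(B \setminus B_N) \to 0$, so I would fix $N$ with $\mathbb{P}(B \setminus B_N) < \epsilon/2$ and then pick $A \in \mathcal{A}$ with $\mathbb{P}(A \Delta B_N) < \epsilon/2$. Using $\mathbb{P}(A \Delta B) \le \mathbb{P}(A \Delta B_N) + \mathbb{P}(B_N \Delta B)$ and $B_N \Delta B = B \setminus B_N$ yields $\mathbb{P}(A \Delta B) < \epsilon$, hence $B \in \mathcal{D}$. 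Since $\mathcal{D}$ is an algebra closed under increasing unions, it is a monotone class (closure under decreasing intersections following from closure under complements and increasing unions), and invoking the monotone class theorem completes the argument.

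The only place requiring genuine care — the ``main obstacle'' such as it is — is this monotone-limit step, where one must exploit the finiteness of $\mathbb{P}$ (it is a probability measure) to guarantee $\mathbb{P}(B \setminus B_N) \to 0$; this is precisely what allows an element of $\mathcal{A}$ approximating a suitable tail $B_N$ to approximate the full limit $B$ as well. Everything else reduces to routine set algebra and subadditivity of the measure.
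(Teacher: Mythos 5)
Your proof is correct and complete: the ``good sets'' class $\mathcal{D}$ is indeed an algebra containing $\mathcal{A}$, your verification of closure under increasing unions correctly uses continuity from below (and the finiteness of $\mathbb{P}$), and the monotone class theorem then gives $\sigma(\mathcal{A}) = \mathcal{F} \subseteq \mathcal{D}$. For comparison, the paper does not actually prove this proposition at all --- it simply refers the reader to a hint for Exercise 1.12.102 in Bogachev --- so there is no argument in the paper to measure yours against; your self-contained monotone-class argument is the standard route and would serve as a perfectly adequate replacement for the citation.
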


\begin{proof}
This is part of a hint to exercise 1.12.102 in \cite{Bogachev}. 
\end{proof}

\clearpage
\section{Supplemental Section}

\begin{prop}\label{prop:P5}
Suppose $V$ is finite, and that $\{X_v \ | \ v \in V\}$ is a collection of random variables. Then the following are equivalent:
\bi
\item (P5) For $X,Y,Z,W$ any finite collections of the $X_v$, if $X \ci Y \ | \ (W,Z)$ and $X \ci W \ | \ (Y,Z)$, then $X \ci (Y,W) \ | \ Z$.
\item (P5*) For $X, Z, A_1,...,A_n$ any finite collections of the $X_v$, if $X \ci A_i \ | \ (Z,(A_j)_{j \neq i})$ for all $i \leq n$, then $X \ci (A_1,...,A_n) \ | \ Z$.
\ei
\end{prop}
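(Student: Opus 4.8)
The plan is to prove the two implications separately, with the forward direction being immediate and the reverse direction requiring a short induction on $n$.

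First I would observe that (P5*) $\Rightarrow$ (P5) is trivial: (P5) is exactly the instance of (P5*) with $n = 2$, $A_1 = Y$, and $A_2 = W$. The two hypotheses $X \ci Y \ | \ (W,Z)$ and $X \ci W \ | \ (Y,Z)$ match the conditions $X \ci A_i \ | \ (Z,(A_j)_{j\neq i})$ for $i=1,2$, and the conclusions coincide as well, so nothing further is needed.

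The substance is in (P5) $\Rightarrow$ (P5*), which I would establish by induction on $n$. The base case $n=1$ is a tautology (hypothesis and conclusion both read $X \ci A_1 \ | \ Z$), and $n=2$ is precisely (P5). For the inductive step, assume the claim for $n-1$ pieces with an \emph{arbitrary} conditioning set, and suppose $X \ci A_i \ | \ (Z,(A_j)_{j\neq i})$ holds for all $i \leq n$. The key move is to apply (P5) with $Y = (A_1,\dots,A_{n-1})$ and $W = A_n$: its second hypothesis $X \ci A_n \ | \ (Z, A_1, \dots, A_{n-1})$ is exactly the given condition for $i = n$, so it remains only to produce its first hypothesis, namely $X \ci (A_1,\dots,A_{n-1}) \ | \ (Z, A_n)$.

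To obtain that first hypothesis I would invoke the induction hypothesis on the $n-1$ pieces $A_1,\dots,A_{n-1}$ with the enlarged conditioning set $(Z,A_n)$. The required premises are $X \ci A_i \ | \ (Z, A_n, (A_j)_{j \neq i,\ j \leq n-1})$ for each $i \leq n-1$; the crucial bookkeeping observation is that $A_n$ together with $\{A_j : j \leq n-1,\ j \neq i\}$ reassembles $\{A_j : j \leq n,\ j \neq i\}$, so each such premise is identical to the given condition $X \ci A_i \ | \ (Z,(A_j)_{j\neq i})$ from the $n$-piece hypotheses. The induction hypothesis then yields $X \ci (A_1,\dots,A_{n-1}) \ | \ (Z,A_n)$, and feeding this into (P5) gives $X \ci (A_1,\dots,A_n) \ | \ Z$, completing the induction. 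The only real obstacle here is purely notational, namely correctly matching the conditioning sets at each stage; the finiteness of $V$ guarantees $n < \infty$ so that the induction terminates, and none of the measure-theoretic subtleties that force the genuine distinction between (P5) and (P5*) in the infinite setting arise in this finite case.
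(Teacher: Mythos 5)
Your proof is correct, and it reaches the same conclusion by a genuinely different inductive step than the paper's. The paper also inducts on the number of blocks, but in the step from $n$ to $n+1$ it first applies (P5) $n$ times to merge $A_{n+1}$ into each earlier block, forming $B_i = (A_i, A_{n+1})$, then applies the induction hypothesis to the $n$ blocks $B_1,\dots,B_n$ with the \emph{same} conditioning set $Z$, and finally invokes decomposition (P2) to recover $X \ci (A_1,\dots,A_{n+1}) \mid Z$. You instead keep the blocks intact, absorb $A_n$ into the conditioning set, apply the induction hypothesis to $A_1,\dots,A_{n-1}$ given $(Z,A_n)$, and finish with a single application of (P5) with $Y = (A_1,\dots,A_{n-1})$ and $W = A_n$. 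Your route requires the induction hypothesis to be quantified over arbitrary conditioning sets (which it is, since $Z$ is universally quantified in (P5*)), but in exchange it uses only (P5) itself --- no appeal to (P2) --- and it keeps all the blocks disjoint throughout, whereas the paper's $B_i$ all share the common piece $A_{n+1}$, which sits slightly awkwardly against the disjointness conventions stated earlier for the conditional-independence axioms. Your bookkeeping of the conditioning sets is the only delicate point, and you have it right: $(Z, A_n, (A_j)_{j \leq n-1,\, j \neq i})$ is exactly $(Z, (A_j)_{j \neq i})$.
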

\begin{proof}
First, note that by Lemma \ref{lem:p1p4}, (P1) -- (P4) hold for the given collection of random variables.
\ \\
\noindent (P5*) $\Rightarrow$ (P5):\\
This follows from the case $n = 2$.\\
\ \\
\noindent (P5) $\Rightarrow$ (P5*):\\
We shall proceed by induction on $n$. (P5) gives the base case $n = 2$. Suppose now that (P5*) holds for any collection of subsets $A_i \subseteq V, i \leq n$.

Let $A_i, i \leq n+1$ be arbitrary finite collections of the $X_v$. Then by (P5), 
$$X \ci (A_i,A_{n+1}) \ |  \ (Z,(A_j)_{j \neq i,n+1}) \textrm{ for all } 1 \leq j \leq n.$$
Let $B_i = (A_i,A_{n+1})$. Then $X \ci B_i \ | \ (Z,(B_i)_{i \leq n})$, and so by the induction hypothesis, $X \ci (B_1,...,B_n) \ | \ Z$. By property (P2), $X \ci (A_1,...,A_{n+1}) \ |\ Z$. This completes the induction step.
\end{proof}

\begin{rek}
The above proof demonstrates that the equivalence of (P5*) and (P5) holds more generally at the level of ternary relations, as opposed to just the relation induced by conditional independence. 
\end{rek}

\begin{exa} \label{exa:5.1}
Let $Y_0, Y_1, Y_2$ be i.i.d.\ $\{0,1\}$-valued Bernoulli random variables with $p = 1/4$, and let $Y_3 = Y_1 + Y_2$. Let $(X_n)_{n > 3}$ be a collection of independent Bernoulli random variables, which are also independent of the $Y_i$, such that $\sum_{n=4}^\infty X_n < \infty$ with probability one. Finally, let 
$$X_1 = Y_1 + \sum_{k=1}^\infty X_{3k+1} \mod 2,$$
$$X_2 = Y_2 + \sum_{k=1}^\infty X_{3k+2} \mod 2,$$
$$X_3 = Y_3 + \sum_{k=1}^\infty X_{3k+3} \mod 2.$$
Also, let $X_0 = X_1 + Y_0$.
Then the $(X_k)_{k=0}^\infty$ form a discrete stochastic process. Conditioning on the $\sigma$-algebra generated by $\{X_4,X_5,...\}$ gives 
$$X_1 = Y_1 + c_1(X_4,X_5,...) \mod 2,$$ 
$$X_2 = Y_2 + c_2(X_4,X_5,...) \mod 2,$$ 
$$X_3 = Y_3 + c_3(X_4,X_5,...) \mod 2,$$ 
for $c_1,c_2,c_3$ some ${0,1}$-valued functions of $(X_i)_{i=4}^\infty$ so that
$$X_3 = X_1 + X_2 + c_3(X_4,...) - c_2(X_4,...) - c_1(X_4,...) \mod 2.$$

Because of the above relation, we have that
\be X_0 \ci X_1 \ | \ X_2, X_3, ..., \label{ln:ci1} \ee
and
\be X_0 \ci X_2 \ | \ X_1, X_3, ..., \label{ln:ci2} \ee
since $X_1$ and $X_2$ are constant given the remaining variables. 
However, it is \emph{not} the case that
$$X_0 \ci (X_1, X_2) \ | \ X_3, X_4, ...$$
since the conditional distribution of $(X_1,X_2)$ is such that $X_1$ takes both values 0 and 1 with nonzero probability, and $Y_0$ is 1 with probability 1/4, so that $X_0$ is equal to $X_1$ with probability 3/4 and different with probability 1/4. The lack of this conditional independence despite lines \eqref{ln:ci1} and \eqref{ln:ci2} demonstrates that both (IIP) and (P5*) fail for this example. Moreover, in the above example (P*) $\Rightarrow$ (G*) does not hold, since if we let $\mathcal{G}$ be the graph defined by (P*), then the set of nodes $\{3,4,...\}$ separates $\{0\}$ and $\{1,2\}$, but if (G*) held this would imply $X_0 \ci (X_1, X_2) \ | \ X_3, X_4, ...$, which we just disproved.

Note also that (P5) holds for every finite subcollection of these variables by Proposition \ref{prop:P1P5} since every finite collection of $n$ of these variables has an everywhere-positive density with respect to the counting measure on $\{0,1\}^n$.

Finally, we demonstrate that any collection of $\{0,1\}$-valued random variables $X_n$ for which $\#\{X_n \neq 0\} < \infty$ with probability one (including the collection just described) necessarily satisfies (DCP). Suppose that $\{X_i \ | \ i \in \N\}$ is such a collection. Suppose also that $D \subseteq \mathbb{N}$ is arbitrary, and $E \in \cap_n \sigma(X_D, X_n, X_{n+1},...)$. With probability one, the infinite-dimensional vector of random variables $\vec{X} = (X_1,X_2,...)$ takes on one of only at most countably infinitely many values $\vec{x}$ (namely, those with only finitely many ones), and so the event $\widetilde{E} = \{\vec{x} \ | \ \vec{x} \in E \text{ and } \P(\vec{X} = \vec{x}) > 0\}$ satisfies $\P(E \Delta \widetilde{E}) = 0$. Let $n \not \in D$ be fixed, and use the notation that if $\vec{x} = (x_1,x_2,...,x_n,...)$, then $\vec{x}' = (x_1,x_2,...,x_{n-1},1-x_n,x_{n+1},...)$. Define the $\sigma$-algebra $\mathcal{F} := \{B \in \sigma(X_D,X_{n+1},...) \ | \ \vec{x} \in B \Rightarrow \vec{x}' \in B\}$. Then for any $m \in D$ or $m > n$, we have $\sigma(X_m) \subseteq \mathcal{F}$. Thus $\sigma(X_D, X_{n+1}, X_{n+2},...) \subseteq \mathcal{F}$. So, we have for any $n \not \in D$ that $\vec{x} \in \widetilde{E}$ implies $\vec{x}' \in \widetilde{E}$. Thus, if we define $\pi(\widetilde{E}) := \{\vec{x}_D \ | \ \vec{x} \in \widetilde{E}\}$ so that $\pi$ is the projection onto the coordinates corresponding to variables in $D$, then $\widetilde{E} = X_D^{-1}(\pi(\widetilde{E})) \in \sigma(X_D)$, and so we have directly verified that (DCP) holds.
\end{exa}

\begin{exa} \label{exa:5.2}
Let $\theta$ be a $\{0,1\}$-valued bernoulli random variable with probability 0.5. Let $(X_i)_{i \in \N}$ be a collection of i.i.d.\ $\mathcal{N}(0,1)$ random variables, and let $Y_i = X_i + \theta$.

Then the $Y_i$ satisfy (P*) with respect to the edgeless graph on $\N$, but do not satisfy (G*) with respect to this graph. This is evident from the fact that for any $i,j$, with probability one, $\lim_{n \to \infty} \frac{1}{n}\sum_{k=1, k \neq i,j}^n Y_k = \theta$, so that conditioning on any set of all but two of the $Y_k$ determines the value of $\theta$ with probability one, and $Y_i$ is independent of $Y_j$ given $\theta$. However, the statement that (G*) holds with respect to the edgeless graph on $\N$ is equivalent to the statement that all of the random variables in question are marginally independent. However, it is clear that $Y_i$ and $Y_j$ are not marginally independent since
\bea \cov(Y_i,Y_j) & = & \E[Y_iY_j] - \E[Y_i]E[Y_j] \nonumber \\
& = & 0.5\left (\E[X_i X_j] + \E[(X_i+1)(X_j+1)]\right ) - (0.5(\E[X_i] + \E[X_i+1]))^2 \nonumber \\
& = & 0.5 - 0.25 = 0.25 \neq 0. \nonumber \eea

Note that (DCP) is not satisfied in this example since the event $\theta = 1$ is contained in the $\sigma$-algebra generated by any infinite collection of the random variables, but is not contained (even up to a measure zero modification) in the $\sigma$-algebra generated by any finite collection of the variables. Thus, letting $D = \emptyset$ in the definition of (DCP) shows that (DCP) fails to hold. On the other hand, it is easy to see that (IIP) is satisfied: any finite collection of the random variables has an everywhere-positive joint density, so the required implications of (IIP) involving a finite conditioning set will all hold; if the conditioning set is infinite, then the arguments from the second paragraph of this example show that all of the non-conditioned variables are conditionally independent, so that any implication required by (IIP) which involves an infinite conditioning set will also hold.

More generally, for any graph $\mathcal{G}$ which contains a node $v$ with finite degree, and another node $w$ not adjacent to $v$, we may let $(X_i)_{i \in \N}$ be a gaussian process independent of $\theta$ which satisfies (G*) with respect to $\mathcal{G}$. If we again let $Y_i = X_i + \theta$, then the $Y_i$ satisfy (P*) with respect to $\mathcal{G}$, but do not satisfy (G*) with respect to $\mathcal{G}$, as seen by considering the separating set to be the neighbor set of $v$, and noting that $v$ and $w$ are not independent given any finite subset of variables (since then $\theta$ is not determined with probability 1).
\end{exa}

\begin{exa}\label{exa:5.3}
Let $(A_n)_{n=1}^\infty$ be a collection of infinitely many random variables such that for all $i,j \in \N$, it is \emph{not} the case that 
$$A_i \ci A_j \ | \ \{A_k\ | \ k \neq i,j\}.$$
Let $\theta$ be a $\{0,1\}$-valued Bernoulli random variable, independent of all other variables mentioned so far, with $p = 0.5$. Finally, let $X_n = A_n + \theta$. We will show that the collection of random variables $\{X_1,X_2,...\}$ satisfies (P5*) but not (DCP). 

Suppose that $\mathcal{G}$ is a graph for which (P*) is satisfied. Note that the events $\{\theta = 0\}$ and $\{\theta = 1\}$ are contained in the sigma algebra generated by any infinite collection of the $X_i$ (by the law of large numbers). Thus, $\cov(X_i, X_j \ | \ \{X_k \ | \  k \neq i,j\}) = \cov(A_i, A_j \ | \ \{A_k \ | \ k \neq i,j\}) \neq 0$, and so there must be an edge between $i$ and $j$ in $\mathcal{G}$. Then $\mathcal{G}$ is the complete graph on $\N$, and so (G*) is trivially satisfied. Therefore (P*) implies (G*), and so by Proposition \ref{prop:converse}, (P5*) holds for this collection of random variables. 

If (DCP) were to hold, then it would be the case that $\cap_n \sigma(X_n, X_{n+1},...) = \sigma(\emptyset)$, the trivial $\sigma$-algebra. However, $\cap_n \sigma(X_n, X_{n+1},...)$ contains the nontrivial events $\{\theta = 1\}$ and $\{\theta = 0\}$, and so (DCP) does not hold.

All that remains is to demonstrate that there exist collections of random variables $(A_n)_{n=1}^\infty$ such that there are no pairwise conditional independences, i.e., such that there is no relation of the form $A_i \ci A_j \ | \ \{A_k \ | \ k \neq i,j\}$.

Let $0 < \alpha < 1$, and define the $A_i$ as follows: let $(B_i)_{i=1}^\infty$ be a collection of i.i.d.\ $\mathcal{N}(0,1)$ random variables, let $A_1 = B_1$, and for $n > 1$, let $A_n = B_n + \alpha B_{n-1}$. Suppose that $i < j$, and note that 
\bea \cov(A_i,A_j \ | \ \{A_k \ | \ k \neq i,j\}) & = & \lim_{k \to \infty} \cov(A_i, A_j \ | \ A_1,...,\widehat{A_i},...,\widehat{A_j},...,A_k) \nonumber \\
& = & \cov(A_i, A_j \ | \ A_1,...,\widehat{A_i},...,\widehat{A_j},A_{j+1}) \nonumber \eea
where the final line was obtained by using $(A_1,...,A_j) \ci (A_{j+2},...) \ | \ A_{j+1}$. 
Next, recall that
$$\cov(A_i, A_j \ | \ A_1,...,\widehat{A_i},...,\widehat{A_j},A_{j+1}) = 0$$
if and only if $(\Sigma_{j+1})^{-1}_{ij} = 0$, where $\Sigma_{j+1}$ is the covariance matrix of $(A_1,...,A_{j+1})$. This covariance matrix is given by 
$$(\Sigma_{j+1})_{ik} = \begin{cases} 1, & i=k=1 \\ 1 + \alpha^2,  & i=k\neq 1 \\ \alpha, & |i-k| = 1 \\ 0, & |i-k| > 1. \end{cases}$$
From this formula, it is readily verified that
$$(\Sigma_{j+1})^{-1}_{ik} = \begin{cases} (-\alpha)^{i-k}\sum_{r=0}^{j+1-i} (-\alpha)^{2r}, & i \geq k \\ (-\alpha)^{k-i}\sum_{r=0}^{j+1-k} (-\alpha)^{2r}, & i < k. \end{cases}$$
Thus, for $i < j$,
$$(\Sigma_{j+1})^{-1}_{ij} = (-\alpha)^{j-i} + (-\alpha)^{j-i+2} \neq 0.$$
Thus, it is not the case that $A_i \ci A_j \ | \ \{A_{k}, k \neq i,j\}$, completing the proof.
\end{exa}

\noindent \textbf{Proof of Lemma \ref{lem:p1p4}.}
(P1*): This is trivial from the commutativity of multiplication and intersection in line \eqref{ln:indep}.

(P2*): This is trivial from the fact that $\sigma(Y) \subseteq \sigma(Y,W)$ and $\sigma(W) \subseteq \sigma(Y,W)$.

Before proving (P3*) and (P4*), we now show that if $\sigma(X) \ci \sigma(W) \ | \ \sigma(Z)$, then for any $A \in \sigma(X)$, we have $\mathbb{P}(A|\sigma(Z,W)) = \mathbb{P}(A|\sigma(Z))$. To see this, let $M = \{ R \cap S \ | \ R \in \sigma(Z), S \in \sigma(W)\}$, and note that $M$ is a $\pi$-system which generates $\sigma(Z,W)$. Also, the collection $\mathcal{D}$ of events $T$ satisfying
$$\mathbb{E}[1_T\mathbb{E}[1_A \ | \ Z]] = \mathbb{E}[1_T 1_A]$$
is readily verified to be a Dynkin system (\ie it is closed under complement and disjoint union). Thus, if we can show that $M \subseteq \mathcal{D}$, then by Dynkin's $\pi$-$\lambda$ theorem, we can conclude from the definition of conditional expectation that $\mathbb{P}(A|\sigma(Z,W)) = \mathbb{P}(A|\sigma(Z))$ (noting that $\mathbb{P}(A|\sigma(Z))$ is a $\sigma(Z,W)$-measurable function). 

If $R \in \sigma(Z)$ and $S \in \sigma(W)$ are arbitrary, then
\bea
& & \mathbb{E}[1_{R\cap S}\mathbb{E}[1_A \ | \ \sigma(Z)]] \nonumber \\
& & = \mathbb{E}[\mathbb{E}[1_{R\cap S}\mathbb{E}[1_A \ | \ \sigma(Z)] \ | \ \sigma(Z)]] \label{ln:tower1} \\
& & = \mathbb{E}[\mathbb{E}[1_A \ | \ \sigma(Z)] \mathbb{E}[1_R 1_S\ | \ \sigma(Z)]] \label{ln:known1} \\
& & = \mathbb{E}[\mathbb{E}[1_A \ | \ \sigma(Z)] \mathbb{E}[1_S \ | \ \sigma(Z)] 1_R ] \label{ln:known2}\\
& & = \mathbb{E}[\mathbb{E}[1_A 1_S \ | \ \sigma(Z)] 1_R ] \label{ln:indep1} \\
& & = \mathbb{E}[\mathbb{E}[1_A 1_S 1_R \ | \ \sigma(Z)] ] \label{ln:known3} \\
& & = \mathbb{E}[1_{R \cap S} 1_A], \label{ln:tower2}
\eea
where lines \eqref{ln:tower1} and \eqref{ln:tower2} are justified by the tower property, lines \eqref{ln:known1}, \eqref{ln:known2}, and \eqref{ln:known3} by removing what is known, and line \eqref{ln:indep1} by the independence assumption. Thus, we have $\mathbb{P}(A|\sigma(Z,W)) = \mathbb{P}(A|\sigma(Z))$.

(P3*): Suppose that $\sigma(X) \ci \sigma(Y,W) \ | \ \sigma(Z)$.
We now wish to show that for arbitrary $A \in \sigma(X)$ and $B \in \sigma(Y)$,
$$\mathbb{P}(A | \sigma(Z,W))\mathbb{P}(B | \sigma(Z,W)) = \mathbb{P}(A,B | \sigma(Z,W)).$$
By the definition of conditional expectation, it is enough to show that, for $C$ an arbitrary event in $\sigma(Z,W)$, we have
$$\mathbb{E}[ 1_C \mathbb{P}(A | \sigma(Z,W))\mathbb{P}(B | \sigma(Z,W))] = \mathbb{E}[1_C 1_A 1_B].$$
Well, 
\bea & & \mathbb{E}[ 1_C \mathbb{E}[1_A | \sigma(Z,W)]\mathbb{E}[1_B | \sigma(Z,W)]] \\
& & = \mathbb{E}[ \mathbb{E}[1_A | \sigma(Z)]\mathbb{E}[1_B 1_C | \sigma(Z,W)]] \label{ln:above} \\
& & = \mathbb{E}[ \mathbb{E}[\mathbb{E}[1_A | \sigma(Z)]\mathbb{E}[1_B 1_C | \sigma(Z,W)] \ | \ \sigma(Z)]] \label{ln:tower3}\\
& & = \mathbb{E}[ \mathbb{E}[1_A | \sigma(Z)] \mathbb{E}[\mathbb{E}[1_B 1_C | \sigma(Z,W)] \ | \ \sigma(Z)]] \label{ln:known5}\\
& & = \mathbb{E}[ \mathbb{E}[1_A | \sigma(Z)] \mathbb{E}[1_B 1_C | \sigma(Z)]] \label{ln:tower4}\\
& & = \mathbb{E}[ \mathbb{E}[1_A 1_B 1_C | \sigma(Z)]] \label{ln:indep2} \\
& & = \mathbb{E}[ 1_A 1_B 1_C], \label{ln:tower5}
\eea

\noindent where in line \eqref{ln:above} we use (P2*) and the above argument, and remove what is known. Line \eqref{ln:known5} is obtained by removing what is known, lines \eqref{ln:tower3}, \eqref{ln:tower4}, and \eqref{ln:tower5} are obtained by the tower property, and line \eqref{ln:indep2} is by the assumed independence. This verifies (P3*).

(P4*): Suppose that $X \ci Y \ | \ (Z,W)$ and $X \ci W \ | \ Z$. We wish to show $X \ci (Y,W) \ | \ Z$. As in the proof of (P3*), we use the fact that $X \ci W \ | \ Z$ to obtain that for any $A \in \sigma(X)$, we have $\mathbb{P}(A | \sigma(Z,W)) = \mathbb{P}(A | \sigma(Z))$.

Suppose now that $A \in \sigma(X), B \in \sigma(Y), C \in \sigma(W)$, and $D \in \sigma(Z)$ are arbitrary.

Then
\bea
& & \mathbb{E}[\mathbb{E}[1_A \ | \ Z]\mathbb{E}[1_{B \cap C} \ |\ Z] 1_D] \nonumber \\
& & = \mathbb{E}[\mathbb{E}[1_A \ | \ Z]\mathbb{E}[\mathbb{E}[1_B 1_C \ |\ Z,W] \ | \ Z] 1_D] \label{tower:a1} \\
& & = \mathbb{E}[\mathbb{E}[\mathbb{E}[1_A \ | \ Z] \mathbb{E}[1_B 1_C \ |\ Z,W] \ | \ Z] 1_D] \label{known:a1} \\
& & = \mathbb{E}[\mathbb{E}[\mathbb{E}[1_A \ | \ Z,W] \mathbb{E}[1_B 1_C\ |\ Z,W] \ | \ Z] 1_D] \label{fact:a1} \\
& & = \mathbb{E}[\mathbb{E}[\mathbb{E}[1_A \ | \ Z,W] \mathbb{E}[1_B \ |\ Z,W] 1_C 1_D \ | \ Z]] \label{known:a2} \\
& & = \mathbb{E}[\mathbb{E}[\mathbb{E}[1_A 1_B \ | \ Z,W] 1_C 1_D \ | \ Z]] \label{indep:a1} \\
& & = \mathbb{E}[\mathbb{E}[\mathbb{E}[1_A 1_B 1_C 1_D \ | \ Z,W] \ | \ Z]] \label{known:a3} \\
& & = \mathbb{E}[1_A 1_B 1_C 1_D ]. \label{tower:a2}
\eea

Thus, we may conclude by a $\pi$-$\lambda$ argument (similar to that used to show $\mathbb{P}(A|\sigma(Z,W)) = \mathbb{P}(A|\sigma(Z))$ prior to (P3*)) that $\sigma(X) \ci \sigma(Y,W) \ | \ \sigma(Z)$. \qed \ \\

\noindent \textbf{Proof of Lemma \ref{lem:d1}.}
Let $A, B, C \subseteq \mathbb{N}$ be finite, and let $D \subseteq \mathbb{N}$ be an arbitrary subset. Suppose that $X_A \ci X_B | X_C, X_D$ and $X_A \ci X_C \ | \ X_B, X_D$. If $D$ is finite, the verification of (IIP) is a consequence of Proposition \ref{prop:P1P5}, so assume $D \subseteq \mathbb{N}$ is infinite. 

By Theorem \ref{thm:density}, we have that for any $E \in \sigma(X_A, X_B, X_C, X_D)$, 
$$\mathbb{P}(E) = \int_E f(x_A,x_B,x_C,x_D) dx_A\ dx_B\ dx_C\ d\mu(x_D)$$
for some probability measure $\mu$, where $f(x_A,x_B,x_C,x_D)$ is the multivariate normal density with the mean $\mu_{A,B,C|D}(x_D)$ and covariance matrix $\Sigma_{A,B,C|D}$. 

Similarly, we have the existence of $f_1(x_B,x_C,x_D)$, $f_2(x_A,x_C,x_D)$, and $f_3(x_A,x_B,x_D)$, the corresponding densities for events in $\sigma(X_B,X_C,X_D)$, $\sigma(X_A,X_C,X_D)$, and $\sigma(X_A,X_B,X_D)$ respectively. Note that these are densities with respect to Lebesgue measure in the corresponding $A$, $B$, and $C$ coordinates, and $\mu$ for $x_D$, and moreover since they are normal densities, they are positive with probability one.

Let $E_A \in \sigma(X_A)$ and $E_B \in \sigma(X_B)$. By the conditional independence $X_A \ci X_B | X_C, X_D$, we have
$$ \mathbb{P}(E_A | X_C, X_D)\mathbb{P}(E_B | X_C, X_D) = \mathbb{P}(E_A, E_B | X_C, X_D). $$

Let $E = \{(x_a,x_b,x_c,x_d) : f_1(x_b,x_c,x_d)f_2(x_a,x_c,x_d) \neq f(x_a,x_b,x_c,x_d)\}$. Suppose that $\mathbb{P}(E) > 0$. Then $E = E_+ \cup E_-$, where 
\bea E_+ & = & \{(x_a,x_b,x_c,x_d) : f_1(x_b,x_c,x_d)f_2(x_a,x_c,x_d) > f(x_a,x_b,x_c,x_d)\},\textrm{ and }\nonumber \\
E_- & = & \{(x_a,x_b,x_c,x_d) : f_1(x_b,x_c,x_d)f_2(x_a,x_c,x_d) < f(x_a,x_b,x_c,x_d)\}. \nonumber \eea
At least one of these two sets has positive probability, so without loss of generality we will assume $\mathbb{P}(E_+) > 0$.

Then 
\be \int_{E_+} f_1(x_b,x_c,x_d)f_2(x_a,x_c,x_d) - f(x_a,x_b,x_c,x_d) dx_adx_bdx_cd\mu(x_d) > 0. \label{ln:intbnd} \ee
By Proposition \ref{prop:algapprox}, there is a sequence of sets $F_n$ which are finite unions of finite intersections of sets in $\sigma(X_A), \sigma(X_B), \sigma(X_C)$, or $\sigma(X_D)$, such that $\mathbb{P}(E_+ \Delta F_n) < 1/n$. Thus, since all of the densities under consideration are bounded (this can be seen from the eigenvalue bounds in the statement of Theorem \ref{thm:density}), we have from the dominated convergence theorem that 
$$\int_{F_n} f_1(x_b,x_c,x_d)f_2(x_a,x_c,x_d) - f(x_a,x_b,x_c,x_d)dx_adx_bdx_cd\mu(x_d)$$
tends to the expression on the left side of line \eqref{ln:intbnd}, and so is positive for sufficiently large $n$.

Since $F_n$ is a finite union of finite intersections, there is some $G = G_A \cap G_B \cap G_C \cap G_D$ with $G_A \in \sigma(X_A)$, etc., which is one of the terms in the union comprising $F_n$, and for which
$$\int_{G} f_1(x_b,x_c,x_d)f_2(x_a,x_c,x_d) - f(x_a,x_b,x_c,x_d) dx_adx_bdx_cd\mu(x_d) > 0.$$
The above integral can be written
\bea
0 & < & \int_{G_D \cap G_C} \int_{G_B} \int_{G_A} f_1(x_b,x_c,x_d)f_2(x_a,x_c,x_d) - \nonumber \\
  &   & - f(x_a,x_b,x_c,x_d) dx_adx_bdx_cd\mu(x_d) \nonumber \\
  & = & \int_{G_D \cap G_C} \int_{G_B} f_1(x_b,x_c,x_d)dx_b \int_{G_A}f_2(x_a,x_c,x_d) dx_a dx_c d\mu(x_d) \nonumber \\
  &   & - \int_{G_D \cap G_C} \int_{G_B} \int_{G_A} f(x_a,x_b,x_c,x_d) dx_adx_bdx_cd\mu(x_d) \nonumber \\
  & = & \int_{G_D \cap G_C} \left[ \mathbb{P}(G_B | X_C=x_c,X_D=x_d)\mathbb{P}(G_A | X_C=x_c,X_D=x_d)\right. \nonumber \\
	&   & - \left. \mathbb{P}(G_A \cap G_B | X_C=x_c,X_D=x_d)\right]dx_cd\mu(x_d). \nonumber 
\eea
This contradicts the independence assumption $X_A \ci X_B \ | \ X_C, X_D$, and therefore we have that $\mathbb{P}(E) = 1$. That is, with probability one
$$f_1(x_b,x_c,x_d)f_2(x_a,x_c,x_d) = f(x_a,x_b,x_c,x_d).$$
A similar factorization holds for $f_1$ and $f_3$. 

Because of this, with probability one it is the case that
\bea 
f_2(x_a,x_c,x_d) & = & \frac{f(x_a,x_b,x_c,x_d)}{f_1(x_b,x_c,x_d)} \nonumber \\
& = & \frac{f_3(x_a,x_b,x_d)f_1(x_b,x_c,x_d)}{f_1(x_b,x_c,x_d)} \nonumber \\
& = &  f_3(x_a,x_b,x_d). \nonumber 
\eea 
Thus almost surely $f_2$ and $f_3$ only depend on $x_a$ and $x_d$, so we may write $f_2(x_a,x_c,x_d) = f_2(x_a,x_d)$, and similarly for $f_3$. Using this we obtain 
\bea 
f(x_a,x_b,x_c,x_d) & = & f_1(x_b,x_c,x_d) f_2(x_a,x_c,x_d) \nonumber \\
& = & f_1(x_b,x_c,x_d) f_2(x_a,x_d) \nonumber 
\eea
with probability one. Thus $X_A \ci (X_B,X_C) \ | \ X_D$, completing the verification of (IIP). \qed \ \\

\noindent \textbf{Details of Example \ref{exa:autoreg}.}
Under these assumptions on the $X_i$, we claim that
$$\var(X_n) \leq \sum_{k=0}^{n-1} (1-\delta)^{2k}.$$
The base case $n=1$ is trivial, since $\var(X_1) = 1$. For the induction step, we have
\bea \var(X_{n+1}) & = & 1 + \var(\sum_{j=1}^N \beta_{nj} X_{n-j}) \nonumber \\
& \leq & 1 + (\sum_{j=1}^N |\beta_{nj}|)^2 \max_{1 \leq j \leq N}\var(X_{n-j}) \nonumber \\
& \leq & 1 + (1-\delta)^2 \max_{1 \leq j \leq N}\var(X_{n-j}) \nonumber \\
& \leq & 1 + (1-\delta)^2\left(\sum_{k=0}^{n-1} (1-\delta)^{2k}\right) \label{ln:byindhyp} \\
& = & \sum_{k=0}^{n} (1-\delta)^{2k}, \nonumber \eea
where we have used the induction hypothesis to obtain line \eqref{ln:byindhyp}. From this, we may conclude that for all $n$,
$$\var(X_n) \leq \left(\sum_{k=0}^\infty (1-\delta)^{2k}\right) = \frac{1}{1 - (1-\delta)^2} \leq \frac{1}{\delta}.$$

For $n > m$ we have
\bea \cov(X_n,X_m) & = & \cov(\sum_{j=1}^N \beta_{nj} X_{n-j},X_m) \nonumber \\
& = & \sum_{j=1}^N \beta_{nj} \cov(X_{n-j},X_m) \nonumber \\
& \leq & (1-\delta) \max_{1 \leq j \leq N} \cov(X_{n-j},X_m). \nonumber \eea

By iteratively applying the above inequality, we may keep decreasing the subscript on the first $X$ variable in the previous line until the subscript reaches $m$, and we gain a factor of $(1-\delta)$ each time. Since decreasing $n$ in this iterative manner until it is at most $m$ requires at least $(n-m)/N$ iterations, we obtain
\bea \cov(X_n,X_m) & \leq & (1-\delta)^{\frac{n-m}{N}} \max_{1 \leq j \leq N} \cov(X_{m+j},X_m) \nonumber \\
& \leq & (1-\delta)^{\frac{n-m}{N}} \var(X_m) \nonumber \\
& \leq & (1-\delta)^{\frac{n-m}{N}}\frac{1}{\epsilon}. \eea

Thus, we may set $g_0(n,m) = (1-\epsilon)^{\frac{|n-m|}{N}}\frac{1}{\epsilon}$, and note that
\bea
g_1(n,m) & = & \frac{(1-\delta)^{\frac{|n-m|}{N}}}{\delta} + \sum_{k=1} ^ \infty \frac{(1-\delta)^{\frac{|n-k| + |k-m|}{N}}}{\delta} \nonumber \\
& = & O\left( (|n-m|+2)\frac{(1-\delta)^{\frac{|n-m|}{N}}}{\delta} + \sum_{k=1}^\infty \frac{(1-\delta)^{\frac{|n-m|+k}{N}}}{\delta}\right ) \nonumber \\
& = & O\left (|n-m|\frac{(1-\delta)^{\frac{|n-m|}{N}}}{\delta}\right ), \nonumber
\eea
and more generally the same argument gives
\be
g_r(n,m) =  O_r\left(|n-m|^r\frac{(1-\delta)^{\frac{|n-m|}{N}}}{\delta}\right ). \label{ln:gbnd}
\ee
For these $g_r$, the requirements of Theorem \ref{thm:density} are clearly satisfied, so all that remains is to demonstrate the required eigenvalue bounds.

The maximum eigenvalue of the matrix $\Sigma_n = (\sigma_{ij})_{1\leq i,j \leq n}$ is bounded by its maximum row sum, which is bounded by
\be \max_{1 \leq i \leq n} \sum_{j=1}^n \frac{(1-\delta)^{\frac{|i-j|}{N}}}{\delta} \leq 2 \sum_{k=0}^\infty \frac{(1-\delta)^{k/N}}{\delta} \leq \frac{2}{\delta (1-\delta)^{1/N}} < \infty. \nonumber \ee

Finally, we verify an upper bound on the eigenvalues of the inverses of the covariance matrices. Let $\sigma_{ij} = \cov(X_i,X_j)$, let $\Sigma_n = (\sigma_{ij})_{i,j \leq n}$, and define $\Sigma_n^{-1} = (\sigma^{ij})_{i,j \leq n}$.

As discussed in Section 5.1.3 of \cite{Lauritzen}, for any $i,j \leq n$ we have
$$\sigma^{ii} = \var(X_i|X_{-i})^{-1},$$
and
$$\frac{\sigma^{ij}}{\sqrt{\sigma^{ii}\sigma^{jj}}} = \frac{\cov(X_i,X_j|X_{-\{i,j\}})}{\sqrt{\var(X_i|X_{-\{i,j\}})\var(X_j|X_{-\{i,j\}})}},$$
which is a partial correlation and so is bounded in magnitude by 1. (Recall the notation $$X_{-i} = (X_1,...,X_{i-1},X_{i+1},...,X_n),$$ and similarly for $X_{-\{i,j\}}$.) Thus
\bea 
|\sigma^{ij}| & \leq & \sqrt{\sigma^{ii}\sigma^{jj}} \nonumber \\
& \leq & \sqrt{\var(X_i|X_{-i})^{-1}\var(X_j|X_{-j})^{-1}}. \nonumber 
\eea

We now claim that for any $i$ and $n$ with $i \leq n$, the quantity $\var(X_i|X_{-i})^{-1}$ is bounded above by a uniform constant $C(\delta,N)$. To see this, note that $\var(X_i | X_{-i})$ depends only on $\beta_{kj}$ for $i \leq k \leq i+N$ and $0 \leq j \leq N$, and is nonzero for any choice of the $\beta_{kj}$ with $\sum_{j=1}^N |\beta_{kj}| \leq 1-\delta$. Moreover, $\var(X_i | X_{-i})$ is a continuous function of $\beta$, and the condition $\sum_{j=1}^N |\beta_{kj}| \leq 1-\delta$ defines a compact domain for $\beta$. Thus the continuous function $\var(X_i | X_{-i})$ of $\beta$ has a compact image which does not include zero, and so for any choice of the $\beta_{ij}$, the quantity $\var(X_i | X_{-i})^{-1}$ is bounded above by some constant $C(\delta,N)$. Therefore we may conclude that $|\sigma^{ij}| \leq C(\delta,N)$.

Next, suppose $j-i > N$. We have $X_i \in \sigma(Y_1,...,Y_i)$, and the $Y_n$ are i.i.d., so $X_i \ci Y_j$. Also, $X_{-\{i,j\}}$ contains $X_{j-1}, X_{j-2}, ..., X_{j-N}$, so 
\bea 
&   & \mathbb{P}(X_j < a, X_i < b | X_{-\{i,j\}}) \nonumber \\
& = & \mathbb{P}\left (\sum_{k=1}^N \beta_{jk} X_{j-k} + \epsilon_j < a, X_i < b \ \Bigg|\  X_{-\{i,j\}}\right ) \nonumber \\
& = & \mathbb{P}\left (\epsilon_j < a -\sum_{k=1}^N \beta_{jk} X_{j-k}, X_i < b\ \Bigg|\ X_{-\{i,j\}}\right ) \nonumber \\
& = & \mathbb{P}\left (\epsilon_j < a - \sum_{k=1}^N \beta_{jk} X_{j-k} \ \Bigg|\ X_{-\{i,j\}}\right ) \mathbb{P}(X_i < b | X_{-\{i,j\}}) \label{ln:nonindep} \\
& = & \mathbb{P}(X_j < a | X_{-\{i,j\}}) \mathbb{P}(X_i < b | X_{-\{i,j\}}) \nonumber
\eea
where we obtain line \eqref{ln:nonindep} from the fact that $\epsilon_j$ is independent of $\epsilon_k$ for all $k < j$, and $X_k \in \sigma(\epsilon_1,...,\epsilon_{j-1})$ for all $k < j$. 

Thus $X_j$ and $X_i$ are conditionally independent given the other variables, and $\sigma^{ij} = 0$ when $|i-j| > N$. Therefore for any $n$, the matrix $\Sigma_n^{-1}$ has at most $2N+1$ nonzero entries in any row. Within each row, each entry is bounded by $C(\delta,N)$, so $(2N+1)C(\delta,N)$ is an upper bound on the largest eigenvalue of $\Sigma_n^{-1}$. This provides a positive lower bound on the smallest eigenvalue of $\Sigma_n$, and concludes the verification of the conditions for Theorem \ref{thm:density}, allowing us to conclude that these $X_i$ satisfy (P5*).

The argument used to show that $X_j$ is independent of $X_i$ given the other variables when $|i-j| > N$ works even when there are infinitely many other variables. Therefore it verifies (P*) with respect to the graph on $\mathbb{N}$ which has an edge between $i$ and $j$ if and only if $|i-j|\leq N$. Thus, by Theorem \ref{thm:CIREquiv}, we may conclude that the collection of variables $X_i$ satisfies (G*) with respect to this graph as well. \qed

\noindent \textbf{Details of Example \ref{exa:dd}.}
We demonstrate that the desired conditions are satisfied for these covariance functions, beginning by verifying that the $\sigma_{ij}$ are diagonally dominant for all sufficiently small $V$ (depending on $\alpha$). For any fixed $p = c(i) \in \mathbb{Z}^m$, and any choice of $\alpha$ and $V$,
\be \sum_{q \in \mathbb{Z}^m} \exp(-d(p,q)^\alpha/V) \leq 1 + \sum_{n \in \mathbb{N}} (2n+1)^m\exp(-n^\alpha/V) < \infty, \nonumber 
\ee
and moreover for all $q \neq p$, the quantity $\exp(-d(p,q)/V)$ tends to zero as $V$ tends to zero. Thus, by the dominated convergence theorem,
\be \sum_{j \neq i} |\sigma_{ij}(V)| \leq \sum_{q \neq p} \exp(-d(p,q)/V) \rightarrow 0 < 1 = \sigma_{ii}(V). \ee
Because the sum above (as a function of $V$) is independent of $p$, we may conclude that for $V$ sufficiently small (again, depending on the fixed choice of $\alpha$), the resulting covariance matrix is diagonally dominant. 

We omit the verification of the bounds on the $g_r$ for these covariances, but note that the verification is similar to that used to obtain line \eqref{ln:gbnd}. 

Thus, for sufficiently small positive $V$, the Gaussian processes with covariance functions given by
$$\sigma_{ij} = \exp(-d(c(i),c(j))^\alpha/V)$$
satisfy the requirements of Theorem \ref{thm:density}, and so satisfy (IIP) and (DCP), and hence (P5*). \qed \ \\

\noindent \textbf{Proof of Example \ref{exa:discmc}.}
For this proof, we will apply Theorem \ref{thm:discreteMR}. For any finite length $m$ sequence $(i_1,...,i_m)$ disjoint from $b_1,...,b_r$, we have
\bea 
& & \mathbb{P}((X_{i_1},...,X_{i_m}) = (a_1,...,a_m) \ | \ X_{j_1}=b_1,...,X_{j_r}=b_r) \nonumber \\
& \geq &  \prod_{j=1}^m \min_{y,z \in \{0,1\}}(\mathbb{P}(X_{i_j}=a_j|X_{i_j-1}=y, X_{i_j+1}=z )) \nonumber \\
& =: & c_I > 0. \nonumber 
\eea

Next, suppose that $n \in \N$ and $B \subseteq \mathbb{N}$ is arbitrary, and that $A$ is an event in $\sigma(X_B,X_n,X_{n+1},...)$. First, suppose that $B$ is infinite. Then, for any $m$, there is some $b \in B$ with $b > m$, so for all sufficiently large $n$ we have $m < b < n$. Thus
$$\mathbb{P}(A|X_1,...,X_m,X_B=x_B) = \mathbb{P}(A|X_B=x_B)$$
is constant, and so 
$$\var(\mathbb{P}(A|X_1,...,X_m,X_B)|X_B=x_B) = 0$$
for all sufficiently large $n$.
On the other hand, if $B$ is finite, we have that there is some maximal $b \in B$. If $b \geq m$, the above argument implies that 
$$\var(\mathbb{P}(A|X_1,...,X_m,X_B)|X_B=x_B) = 0$$
for all sufficiently large $n$, and so we may assume $m > b$.
Since the $X_i$ form a Markov chain, and since $X_m$ can only take values in $\{0,1\}$, we have that if $n > m$,
\bea & & \var(\mathbb{P}(A|X_1,...,X_m,X_B)|X_B=x_B) \nonumber \\
& =    & \var(\mathbb{P}(A|X_m, X_B)|X_B=x_B) \nonumber \\
& \leq & |\mathbb{P}(A|X_m=1, X_B = x_B)-\mathbb{P}(A|X_m=0, X_B = x_B)|. \label{ln:variancebound} \eea
For any arbitrary $m \leq r < n$, we have the bound
\bea & & |\mathbb{P}(A|X_r=1, X_B = x_B)-\mathbb{P}(A|X_r=0, X_B = x_B)| \nonumber \\
& =    & |\mathbb{P}(A|X_{r+1} = 1, X_B = x_B)\mathbb{P}(X_{r+1}=1|X_r=1) \nonumber \\
&      &+\ \mathbb{P}(A|X_{r+1} = 0, X_B = x_B)\mathbb{P}(X_{r+1}=0|X_r=1) \nonumber \\
&      & -\ \mathbb{P}(A|X_{r+1} = 1, X_B = x_B)\mathbb{P}(X_{r+1}=1|X_r=0) \nonumber \\
&      & -\ \mathbb{P}(A|X_{r+1} = 0, X_B = x_B)\mathbb{P}(X_{r+1}=0|X_r=0)| \nonumber \\
& = & |\mathbb{P}(A|X_{r+1} = 1, X_B = x_B)(\mathbb{P}(X_{r+1}=1|X_r=1)- \mathbb{P}(X_{r+1}=1|X_r=0)) \nonumber \\
&      & +\ \mathbb{P}(A|X_{r+1} = 0, X_B = x_B)(\mathbb{P}(X_{r+1}=0|X_r=1) - \mathbb{P}(X_{r+1}=0|X_r=0))| \nonumber \\
& = & |\mathbb{P}(A|X_{r+1} = 1, X_B = x_B)(p_r - t_r) + \mathbb{P}(A|X_{r+1} = 0, X_B = x_B)(t_r - p_r)| \nonumber \\
& = & |(\mathbb{P}(A|X_{r+1} = 1, X_B = x_B) - \mathbb{P}(A|X_{r+1} = 0, X_B = x_B))(p_r - t_r)| \nonumber \\
& = & |\mathbb{P}(A|X_{r+1} = 1, X_B = x_B) - \mathbb{P}(A|X_{r+1} = 0, X_B = x_B)||p_r-t_r|. \nonumber
\eea

Thus, by induction, we have that
\bea & & |\mathbb{P}(A|X_m=1, X_B = x_B)-\mathbb{P}(A|X_m=0, X_B = x_B)| \nonumber \\ & & \leq |\mathbb{P}(A|X_{n-1}=1, X_B = x_B)-\mathbb{P}(A|X_{n-1}=0, X_B = x_B)|\prod_{r=m}^{n-2} |p_r-t_r| \nonumber \\ & & \leq \prod_{r=m}^{n-2} |p_r-t_r|, \nonumber \eea
and so by line \eqref{ln:variancebound}, we have that 
$$\var(\mathbb{P}(A|X_1,...,X_m,X_B)|X_B=x_B) \leq \prod_{r=m}^{n-2} |p_r-t_r| =:  g_{m,B}(n).$$
Since $\sum_n (1-(p_n-t_n)) = \infty$, we have that $\prod_{r=m}^{n-2} |p_r-t_r| \rightarrow 0$ as $n \rightarrow \infty$, by the usual arguments involving the logarithm. We may now finish the proof by invoking Theorem \ref{thm:discreteMR}. \qed

The next two results are used to prove Proposition \ref{prop:ising}

 \begin{lem} \label{lem:isinglem}
Suppose given a graph $\mathcal{G} = (V,E)$ with $V = \{0\} \cup \mathbb{N}$ and parameter set $\Theta$. If the limit
$$\lim_{n \rightarrow \infty} \mathbb{P}^m_n((X_1,...,X_m)=v)$$
exists for all $v \in \{0,1\}^m$ and all $m \in \mathbb{N}$, the infinite Ising model distribution from Definition \ref{def:iim} exists and is unique.
\end{lem}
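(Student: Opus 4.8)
The plan is to recognize Definition~\ref{def:iim} as a disguised application of the Kolmogorov extension theorem and to verify that the data it provides indeed satisfy the hypotheses of that theorem. The standing assumption supplies, for each $m$, a function $v \mapsto \mathbb{P}^m((X_1,\dots,X_m)=v)$ on $\{0,1\}^m$ obtained as the limit of the marginals $\mathbb{P}^m_n$. Since the state space $\{0,1\}$ is finite, each $\{0,1\}^m$ is a (trivially Polish) standard Borel space, so none of the usual regularity obstructions to the extension theorem arise; the entire content of the lemma reduces to checking that the family $\{\mathbb{P}^m\}_{m\ge 1}$ is a consistent family of finite-dimensional distributions.

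First I would confirm that each $\mathbb{P}^m$ is a genuine probability measure on $\{0,1\}^m$. Each $\mathbb{P}^m_n$ is a probability measure on the finite set $\{0,1\}^m$, so its point masses are nonnegative and sum to $1$. Passing to the limit $n\to\infty$ preserves nonnegativity pointwise, and because the sum $\sum_{v\in\{0,1\}^m}\mathbb{P}^m_n(\{v\})=1$ has only finitely many terms, the limit may be taken inside the sum to give $\sum_{v}\mathbb{P}^m(\{v\}) = 1$. Hence $\mathbb{P}^m$ is a probability measure. Next I would establish the marginalization (Kolmogorov) consistency: for every $n>m$ the distributions $\mathbb{P}^m_n$ and $\mathbb{P}^{m+1}_n$ are both marginals of the single Ising distribution $\mathbb{P}_n$ on $\{0,1,\dots,n\}$, so for each $v\in\{0,1\}^m$
$$\mathbb{P}^m_n(\{v\}) = \mathbb{P}^{m+1}_n(\{(v,0)\}) + \mathbb{P}^{m+1}_n(\{(v,1)\}).$$
Letting $n\to\infty$ and invoking the hypothesis that all three limits exist yields
$$\mathbb{P}^m(\{v\}) = \mathbb{P}^{m+1}(\{(v,0)\}) + \mathbb{P}^{m+1}(\{(v,1)\}),$$
i.e.\ $\mathbb{P}^m$ is the projection of $\mathbb{P}^{m+1}$ onto the first $m$ coordinates. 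Iterating this relation shows $\mathbb{P}^m$ is the projection of $\mathbb{P}^{m'}$ onto $\{1,\dots,m\}$ for every $m'\ge m$.

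With the tower consistency in hand, I would define the finite-dimensional distribution associated to an arbitrary finite $S\subseteq\mathbb{N}$ by marginalizing $\mathbb{P}^{\max S}$ onto the coordinates in $S$; the consistency just proved guarantees this assignment is independent of which $\mathbb{P}^{m'}$ (with $m'\ge\max S$) one marginalizes, and is stable under further marginalization and under permutation of coordinates. This is exactly a projective family in the sense required by the Kolmogorov extension theorem, which then furnishes a probability measure on $\{0,1\}^{\mathbb{N}}$ equipped with the product $\sigma$-algebra whose finite-dimensional marginals are the $\mathbb{P}^m$ and their marginalizations. Uniqueness is automatic: the cylinder sets form a $\pi$-system generating the product $\sigma$-algebra, and any two measures agreeing on the finite-dimensional distributions agree on this $\pi$-system, hence coincide by Dynkin's $\pi$--$\lambda$ theorem. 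The only step requiring any care is the interchange of the limit $n\to\infty$ with the marginal sum in the consistency identity, and this is harmless precisely because that sum runs over the finite set $\{0,1\}$; thus the finiteness of the state space, rather than any deep estimate, is what makes the argument go through.
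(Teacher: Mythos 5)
Your proposal is correct and follows essentially the same route as the paper: verify that each limiting $\mathbb{P}^m$ is a probability measure by interchanging the limit with the finite sum over $\{0,1\}^m$, check marginalization consistency by noting that the finite-$n$ marginals are automatically consistent and passing to the limit, and then invoke the Kolmogorov consistency theorem for existence and uniqueness. The extra detail you supply on the $\pi$--$\lambda$ argument for uniqueness is implicit in the paper's appeal to Kolmogorov's theorem.
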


 \begin{proof}
Note that for all finite $n$ we have 
$$\sum_{v \in \{0,1\}^m} \mathbb{P}^m_n((X_1,...,X_m)=v) = 1,$$
and so
\bea 
&   & \sum_{v \in \{0,1\}^m} \mathbb{P}^m((X_1,...,X_m)=v) \nonumber \\
& = & \sum_{v \in \{0,1\}^m} \lim_{n\to\infty} \mathbb{P}^m_n((X_1,...,X_m)=v) \nonumber \\
& = & \lim_{n\to\infty} \sum_{v \in \{0,1\}^m} \mathbb{P}^m_n((X_1,...,X_m)=v) \nonumber \\
& = & \lim_{n\to\infty} 1 = 1. \nonumber
\eea
Thus the distributions $\mathbb{P}^m$ are actually probability distributions on $\{0,1\}^m$. Moreover, by the properties of marginalization, for $m_1 < m_2$, we have for all sufficiently large $n$, and any $v \in \{0,1\}^{m_1}$, that
$$\mathbb{P}^{m_2}_n((X_1,...,X_{m_1}) = v) = \mathbb{P}^{m_1}_n((X_1,...,X_{m_1}) = v),$$
and so
$$\mathbb{P}^{m_2}((X_1,...,X_{m_1}) = v) = \mathbb{P}^{m_1}((X_1,...,X_{m_1}) = v),$$
and so the marginalizations of the $\mathbb{P}^m$ are consistent. Thus by the Kolmogorov Consistency theorem, there exists a unique joint distribution $\mathbb{P}$ on $\{0,1\}^\infty$ with finite-dimensional distributions given by the distributions $\mathbb{P}^m$ and their marginalizations.
\end{proof}
 \ \\

We now state a more explicit condition which can be used to verify whether the limit in line \eqref{ln:limexist} exists.

\begin{prop} \label{prop:fmvn}
Suppose given a graph $\mathcal{G} = (\{0\}\cup\mathbb{N},E)$ and parameter set $\Theta$. For $v \in \{0,1\}^m$, and $n \geq m$, define the function 
\be f_m(v,n) := 
\frac{\sum_{X \in \{0,1\}^{n-m}}\exp\left(\sum_{i,j > m} \theta_{ij} X_i X_j + \sum_{i\leq m, j > m} \theta_{ij} v_i X_j\right)}
{\sum_{X \in \{0,1\}^{n-m}}\exp\left(\sum_{i,j > m} \theta_{ij} X_i X_j + \sum_{j > m} \theta_{0j} X_j\right)} \label{ln:fmvn} \ee
where for $X \in \{0,1\}^{n-m}$ we have labeled the first coordinate as $X_{m+1}$ and the final coordinate $X_n$, and for ease of notation we have used $v_0 = 1$.

Then the limit in line \eqref{ln:limexist} exists and is nonzero for all $m$ and $v$ if and only if the limit $\lim_{n \to \infty} f_m(v,n)$ exists and is nonzero for all $m$ and $v$.
\end{prop}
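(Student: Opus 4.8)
The plan is to reduce the whole statement to a single algebraic identity relating the marginal $\mathbb{P}^m_n((X_1,\dots,X_m)=v)$ to $f_m(v,n)$, the point being that this identity makes the (hard-to-control) normalizing constant $Z_n$ cancel. First I would write the marginal explicitly from the definition of the finite Ising model: with the convention $x_0=1$ and $Z_n=\sum_{x\in\{0,1\}^n}\exp(\sum_{(i,j)\in E}\theta_{ij}x_ix_j)$,
$$\mathbb{P}^m_n((X_1,\dots,X_m)=v)=\frac{1}{Z_n}\sum_{\substack{x\in\{0,1\}^n \\ (x_1,\dots,x_m)=v}}\exp\Big(\sum_{(i,j)\in E}\theta_{ij}x_ix_j\Big).$$
I would then split the edges of $\mathcal{G}_n$ into three groups: those inside $\{0,1,\dots,m\}$, the cross edges between $\{0,1,\dots,m\}$ and $\{m+1,\dots,n\}$, and those inside $\{0\}\cup\{m+1,\dots,n\}$.

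This factorization is the key step. Fixing $(x_1,\dots,x_m)=v$ and $x_0=1=v_0$, the first group contributes a factor $C_v:=\exp(\sum_{0\le i<j\le m}\theta_{ij}v_iv_j)$ depending only on $v$ (and $m,\Theta$) and, crucially, \emph{not} on $n$, since those edges already lie in $\mathcal{G}_n$ for every $n\ge m$. The remaining sum over $x_{m+1},\dots,x_n$ is exactly the numerator of $f_m(v,n)$. I would next observe that the denominator of $f_m(v,n)$ is its numerator evaluated at $v=\vec 0$ — the cross sum $\sum_{i\le m,\,j>m}\theta_{ij}v_iX_j$ collapses to $\sum_{j>m}\theta_{0j}X_j$ when $v_1=\dots=v_m=0$ — and that $C_{\vec 0}=1$. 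Hence the denominator equals $Z_n\,\mathbb{P}^m_n((X_1,\dots,X_m)=\vec 0)$, and altogether
$$f_m(v,n)=\frac{\mathbb{P}^m_n((X_1,\dots,X_m)=v)}{C_v\,\mathbb{P}^m_n((X_1,\dots,X_m)=\vec 0)},$$
with $Z_n$ having cancelled. Summing the equivalent relation $\mathbb{P}^m_n(v)=C_v\,\mathbb{P}^m_n(\vec 0)\,f_m(v,n)$ over the finitely many $v\in\{0,1\}^m$ and using $\sum_v\mathbb{P}^m_n(v)=1$ yields the inversion $\mathbb{P}^m_n(v)=C_vf_m(v,n)\big/\sum_{v'}C_{v'}f_m(v',n)$.

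The equivalence then follows by passing to the limit in each direction, using that $\{0,1\}^m$ is finite so that finite sums commute with limits. If each $\lim_n f_m(v,n)=L_{m,v}\in(0,\infty)$, the inversion gives $\lim_n\mathbb{P}^m_n(v)=C_vL_{m,v}\big/\sum_{v'}C_{v'}L_{m,v'}$, which exists and is nonzero since the denominator is a finite positive sum. Conversely, if each $\lim_n\mathbb{P}^m_n(v)=P_{m,v}\in(0,1]$ exists and is nonzero — so in particular $P_{m,\vec 0}>0$ — then $\lim_n f_m(v,n)=P_{m,v}\big/(C_vP_{m,\vec 0})$ exists and is finite and nonzero. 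I expect no substantive obstacle beyond bookkeeping; the only points needing care are confirming that $C_v$ is genuinely independent of $n$ and that passing to ratios eliminates $Z_n$, which is exactly what makes $f_m$ a more tractable object than the marginal itself.
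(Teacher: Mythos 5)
Your proof is correct and follows essentially the same route as the paper's: both rest on the identity expressing $f_m(v,n)$ as (a constant multiple of) the ratio $\mathbb{P}^m_n((X_1,\dots,X_m)=v)/\mathbb{P}^m_n((X_1,\dots,X_m)=\vec 0)$, followed by inversion using the normalization $\sum_v \mathbb{P}^m_n(v)=1$. Your version is in fact slightly more careful than the paper's, whose displayed identity omits the $n$-independent factor $C_v=\exp\bigl(\sum_{0\le i<j\le m}\theta_{ij}v_iv_j\bigr)$ that you track explicitly (harmless for the equivalence, since $C_v$ is a positive constant not depending on $n$).
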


\begin{proof}
We have
\be f_m(v,n) = \frac{\mathbb{P}^m_n((X_1,...,X_m) = (v_1,...,v_m))}{\mathbb{P}^m_n((X_1,...,X_m) = (0,...,0))}. \label{ln:fmvnalt} \ee
From this, it is easy to see that if the limit from line \eqref{ln:limexist} exists and is finite and positive for all $m$ and $v$, then the same holds of $\lim_{n \to \infty} f_m(v,n)$. 

For the other direction, note by line \eqref{ln:fmvnalt} that for any $n$ we have
$$\mathbb{P}^m_n((X_1,...,X_m) = (v_1,...,v_m)) = f_m(v,n)\mathbb{P}^m_n((X_1,...,X_m) = (0,...,0)).$$

\noindent Let $F_m(n) = \sum_{v \in \{0,1\}^m} f_m(v,n)$. Then $\mathbb{P}^m_n((X_1,...,X_m) = (0,...,0)) = 1/F_m(n)$. Assuming that $f_m(v,n)$ has a finite, nonzero limit as $n \to \infty$ for all $v$ and $m$, we obtain that $F_m(n)$ does as well, and so 
$$\lim_{n \to \infty} 1/F_m(n) = \lim_{n \to \infty} \mathbb{P}^m_n((X_1,...,X_m) = (0,...,0)) =: \mathbb{P}^m((X_1,...,X_m) = (0,...,0))$$
exists and is nonzero as well. Finally, note that
$$\mathbb{P}^m((X_1,...,X_m) = v) := \lim_{n \to \infty} \mathbb{P}^m_n((X_1,...,X_m) = v) = \lim_{n \to \infty} f_m(n,v) / F_m(n)$$
exists and is nonzero since $f_m(n,v)$ and $F_m(n)$ have limits as $n$ tends to $\infty$.
\end{proof}\ \\

%\begin{proof}[Proof of Proposition \ref{prop:ising}.]
\noindent \textbf{Proof of Proposition \ref{prop:ising}.} If the infinite Ising model is well-defined with $\mathbb{P}((X_1,...,X_m) = v) > 0$ for all $m, v$, then by the definition of the infinite Ising model the quantity
$$f_m(v,n) = \frac{\mathbb{P}^m_n((X_1,...,X_m) = (v_1,...,v_m))}{\mathbb{P}^m_n((X_1,...,X_m) = (0,...,0))}$$
has a nonzero limit for all $m$ and $v$, and so the equivalent expression from line \eqref{ln:fmvn} does as well. For the other direction, combine Lemma \ref{lem:isinglem} with Proposition \ref{prop:fmvn}.\qed
%\end{proof} \ \\

\end{document}